\newtheorem{theorem}{Theorem}[section]
\newtheorem{lemma}[theorem]{Lemma}
\newtheorem{proposition}[theorem]{Proposition}
\newtheorem{definition}[theorem]{Definition}
\newtheorem{conjecture}[theorem]{Conjecture}
\numberwithin{equation}{section}
\DeclareMathOperator*{\rep}{Rep}
\DeclareMathOperator*{\diag}{diag}
\DeclareMathOperator*{\res}{Res}
\begin{document}

\title{On the extended T-system of type $C_3$}
\author{Jian-Rong Li}
\address{School of mathematics and statistics, Lanzhou University, Lanzhou 730000, P. R. China.}
\email{lijr07@gmail.com, lijr@lzu.edu.cn}

\maketitle

\begin{abstract}
We continue the study of extended T-systems of quantum affine algebras. We find a sub-system of the extended T-system of the quantum affine algebra $U_q \hat{\mathfrak{g}}$ of type $C_3$. The sub-system consisting of four systems which are denoted by I, II, III, and IV. Each of the systems I, II, III, IV is closed. The systems I-IV can be used to compute minimal affinizations with weights of the form $\lambda_1 \omega_1 + \lambda_2 \omega_2 + \lambda_3 \omega_3$, where at least one of $\lambda_1$, $\lambda_2$, $\lambda_3$ are zero. Using the systems I-IV, we compute the characters of the restrictions of the minimal affinizations in the systems to $ U_q \mathfrak{g} $ and obtain some conjectural decomposition formulas for the restrictions of some minimal affinizations.
\end{abstract}

Mathematics Subject Classification (2010): Primary 17B37; Secondary 81R50, 82B23

\section{Introduction}
The T-systems are some families of relations in the Grothendieck ring of the category of the finite-dimensional modules of quantum affine algebras (or Yangians), see \cite{K83}, \cite{K84}, \cite{K87}, \cite{KR90}, \cite{KNS94}, \cite{Nak03}, \cite{Her06}. The T-systems are widely applied to representation theory, combinatorics and integrable systems, see the recent survey \cite{KNS11}.

Usual T-systems involve the so-called The Kirillov-Reshetikhin modules, introduced in \cite{KR90}. They are, perhaps, the best understood class of simple modules over quantum affine algebras. Recently, the usual T-systems have been generalized to the so called extended T-systems, see \cite{MY12b}, \cite{LM12}. An extended T-system involves minimal affinizations of quantum affine algebras. The family of minimal affinizations is an important family of irreducible modules which contains the Kirillov-Reshetikhin modules, see \cite{C95}, \cite{CP95b}, \cite{CP96a}, \cite{CP96b}. The minimal affinizations are also interesting from the physical point of view, see Remark 4.2 of \cite{FR92} and \cite{C95}.

Minimal affinizations are studied intensively in recently years, see for example, \cite{CMY12}, \cite{CG11}, \cite{Her07}, \cite{LM12}, \cite{Mou10}, \cite{MF11}, \cite{MY12a}, \cite{MY12b}, \cite{MY12c}, \cite{Nao12}.
The finite dimensional representations of $U_q \hat{\mathfrak{g}}$ and cluster algebras are closely related, see \cite{IIKKN13a}, \cite{IIKKN13b}, \cite{HL10}, \cite{HL13}, \cite{Nak11}. We expect that the relations in the extended T-systems are special relations in the cluster algebras. A cluster algebra algorithm for computing $q$-characters of Kirillov-Reshetikhin modules for any untwisted quantum affine algebra is given in \cite{HL13}. Their method uses the T-systems for Kirillov-Reshetikhin modules. We expect that the extended T-systems can be used to find new algorithms to compute $q$-characters of more general modules including minimal affinizations.

The extended T-systems of type $A$, $B$ has been found in \cite{MY12b} and the extended T-system of type $G_2$ has been found in \cite{LM12}. In \cite{MY12b}, it was conjectured that the extended T-systems exist in all types.

In this paper, we continue the study of extended T-systems of quantum affine algebras. Let $\rep( U_q\hat{\mathfrak{g}} )$ denote the Grothendieck ring of the category of finite-dimensional representations of $U_q\hat{\mathfrak{g}}$. The irreducible finite-dimensional modules of quantum affine algebras are parameterized by the $l$-highest weights or Drinfeld polynomials.

Our method is similar as the method used in \cite{Nak03}, \cite{Her06}, \cite{MY12b}, \cite{LM12}. Let $a\in\mathbb{C}^{\times}$ and let $\mathcal{T}$ be an irreducible $U_q\hat{\mathfrak{g}}$-module such that the zeros of the Drinfeld polynomials of $\mathcal{T}$
belong to $aq^\mathbb{Z}$. Following \cite{MY12b}, we define the left, right, and bottom modules, denoted by $\mathcal{L}$, $\mathcal{R}$, $\mathcal{B}$ respectively, as the simple modules whose Drinfeld polynomials has zeros obtained by dropping the rightmost, leftmost, and both left- and rightmost zeros of the union of zeros of the Drinfeld polynomials of the top module $\mathcal{T}$.

The $q$-character theory and the Frenkel-Mukhin (FM) algorithm are important tools to study the representation theory of quantum affine algebras, see \cite{NT98}, \cite{FR98}, \cite{FM01}. The main tools in this paper are the $q$-character theory and the FM algorithm.

Recall (\cite{Nak04}, Definition 10.1) that a $U_q\hat{\mathfrak{g}}$-module is called special if its $q$-character contains only one dominant monomial. Let $\mathfrak{g}$ be the simple Lie algebra of type $C_3$. The FM algorithm applies to special modules. Although in general the minimal affinizations of type $C_3$ are not special, there are some families of minimal affinizations of type $C$ which are special.

We use $\mathcal{T}$ to denote the $l$-highest weight module with an $l$-highest weight $T$. Let $k, \ell, m \in \mathbb{Z}_{\geq 0}$, $s\in \mathbb{Z}$. We consider families of special modules $\mathcal{T}_{k, \ell, m}^{(s)}$ and $\mathcal{\tilde{T}}_{k, \ell, m}^{(s)}$ (their $l$-highest weights are presented in Section 3.2) which are minimal affinizations of type $C_3$.

We take minimal affinizations $\mathcal{T}_{k, \ell, 0}^{(s)}$, $\mathcal{\tilde{T}}_{k, \ell, 0}^{(s)}$ as top modules $\mathcal{T}$ respectively. It turns out that the left, right, bottom modules of $\mathcal{T}$ satisfy a relation $[\mathcal{L}] [\mathcal{R}]=[\mathcal{T}] [\mathcal{B}] + [\mathcal{S}]$. Here $[ \cdot ]$ denotes the equivalence class of a $U_q\hat{\mathfrak{g}}$-module in $\rep( U_q\hat{\mathfrak{g}} )$ and $S$ is a tensor product of some irreducible modules. The factors of $\mathcal{S}$ are called sources. The factors of the sources can be some modules which are not minimal affinizations. In order to obtain a closed system, we take the sources as top modules and compute new left, right, bottom modules, and sources. We continue this procedure until all modules in the sources are the modules obtained before. Then we obtain the desired closed systems I and III.

The system I contains minimal affinizations $\mathcal{T}_{k, \ell, 0}^{(s)}$, $\mathcal{T}_{k, 0, m}^{(s)}$, and $\mathcal{\tilde{T}}_{k, 0, m}^{(s)}$ for all $k$, $\ell$, $m \in \mathbb{Z}_{\geq 0}$, $s\in \mathbb{Z}$. The system III contains minimal affinizations $\mathcal{\tilde{T}}_{k, \ell, 0}^{(s)}$ for all $k, \ell \in \mathbb{Z}$. We denote by II, IV the dual systems of I, III respectively. The system II contains minimal affinizations which can be obtained from $\mathcal{\tilde{T}}_{0, k, \ell}^{(s)}$, $k$, $\ell$, $m \in \mathbb{Z}_{\geq 0}$, $s\in \mathbb{Z}$, by shifting the upper-subscripts. The system IV contains minimal affinizations which can be obtained from $T_{0, k, \ell}^{(s)}$, $k, \ell \in \mathbb{Z}$, $s\in \mathbb{Z}$, by shifting the upper-subscripts. Therefore we find a sub-system of the extended T-system of the quantum affine algebra $U_q \hat{\mathfrak{g}}$ of type $C_3$ which is the union of the systems I, II, III, IV and the sub-system can be used to compute minimal affinizations with weights of the form $\lambda_1 \omega_1 + \lambda_2 \omega_2 + \lambda_3 \omega_3$, where at least one of $\lambda_1$, $\lambda_2$, $\lambda_3$ are zero, see Section \ref{main results}.

We find that the modules in the systems I, III are
\begin{align*}
& \mathcal{T}_{k, \ell, 0}^{(s)}, \ \mathcal{T}_{k, 0, m}^{(s)}, \ \mathcal{T}_{0, \ell, r}^{(s)}, \ \mathcal{\tilde{T}}_{k, 0, m}^{(s)}, \ \mathcal{S}_{k, \ell}^{(s)}, \ \mathcal{R}_{k, 2\ell, \ell}^{(s)}, \\
& \mathcal{R}_{k, 2\ell+1, \ell}^{(s)}, \ \mathcal{R}_{k, 2\ell+2, \ell}^{(s)}, \ \mathcal{U}_{k, \ell}^{(s)}, \ \mathcal{V}_{k, \ell}^{(s)}, \ \mathcal{P}_{k, \ell}^{(s)}, \ \mathcal{O}_{k, \ell}^{(s)},
\end{align*}
where $s\in \mathbb{Z}$, $k, \ell, m \in \mathbb{Z}_{\geq 0}$, $r\in \{0, 1, 2\}$.
We show that these modules are special. We show that every relation $[\mathcal{L}] [\mathcal{R}]=[\mathcal{T}] [\mathcal{B}] + [\mathcal{S}]$ in the system holds by comparing the dominant monomials in both sides of $[\mathcal{L}] [\mathcal{R}]=[\mathcal{T}] [\mathcal{B}] + [\mathcal{S}]$. Moreover, we show that the modules $\mathcal{T}\otimes \mathcal{B}$ and $\mathcal{S}$ in the systems I-IV are irreducible.

Our main results are Theorems \ref{the system I}, \ref{the system II}, \ref{the system III}, \ref{the system IV} in Section \ref{main results}.

The extended T-system is a powerful tool of studying the finite dimensional representations of $U_q \hat{\mathfrak{g}}$. Using the systems I-IV, we compute the characters of the restrictions of the minimal affinizations in the systems to $ U_q \mathfrak{g} $ and obtain some conjectural decomposition formulas for the restrictions of some minimal affinizations, see Section \ref{conjectural formulas}.

Let $\mathcal{T}$ be a $U_q\hat{\mathfrak{g}}$-module. We denote by $\res(\mathcal{T})$ the restriction of $\mathcal{T}$ to a $U_q \mathfrak{g}$-module. As a vector space, $\res(\mathcal{T})$ is the same as $\mathcal{T}$. But we consider $\res(\mathcal{T})$ as a $U_q \mathfrak{g}$-module.

The paper is organized as follows. In Section \ref{background}, we give some background material. In Section \ref{main results}, we describe the system I, II, III, and IV. In Section \ref{prove special}, we prove that the modules in the systems I and III are special. In Section \ref{prove system}, we prove Theorem \ref{the system I} and Theorem \ref{the system III}. In Section \ref{prove irreducible}, we prove that the module $\mathcal{T} \otimes \mathcal{B}$ is irreducible for each relation in the systems I, II, III, and IV. In Section \ref{prove compute}, we prove Proposition \ref{compute 1}.
In Section \ref{conjectural formulas}, we give conjectural character formulas for $\res(\mathcal{T}_{k, \ell, 0}^{(s)})$, $\res(\mathcal{T}_{k, 0, m}^{(s)})$, $\res(\mathcal{\tilde{T}}_{k, 0, m}^{(s)})$, $s\in\mathbb{Z}$, $k$, $\ell$, $m \in  \mathbb{Z}_{\geq 0}$.

\section{Background} \label{background}

\subsection{Quantum affine algebra}

Let $\mathfrak{g}$ be a complex simple Lie algebra of type $C_3$ and $\mathfrak{h}$ a Cartan subalgebra of $\mathfrak{g}$. Let $I=\{1, 2, 3\}$.
We choose simple roots $\alpha_1, \alpha_2, \alpha_3$ and scalar product $(\cdot, \cdot)$ such that
\begin{align*}
( \alpha_1, \alpha_1 ) = 2, \ ( \alpha_2, \alpha_2 )=2, \ ( \alpha_3, \alpha_3 )=4, \ ( \alpha_1, \alpha_2 )=-1, \ ( \alpha_1, \alpha_3 )= 0, \ ( \alpha_2, \alpha_3 )=-2.
\end{align*}
Let $\{\alpha_1^{\vee}, \alpha_2^{\vee}, \alpha_3^{\vee}\}$ and $\{\omega_1, \omega_2, \omega_3\}$ be the sets of simple coroots and fundamental weights respectively. Let $C=(C_{ij})_{i, j\in I}$ denote the Cartan matrix, where $C_{ij}=\frac{2 ( \alpha_i, \alpha_j ) }{( \alpha_i, \alpha_i )}$. Let $r_1=1, r_2=1, r_3=2$, $D=\diag (r_1, r_2, r_3)$ and $B=DC$.
Then
\begin{align*}
C=
\left(
\begin{array}{ccc}
2 & -1 & 0 \\
-1 & 2 & -2 \\
0 & -1 & 2
\end{array}
\right), \
B=
\left(
\begin{array}{ccc}
2 & -1 & 0 \\
-1 & 2 & -2 \\
0 & -2 & 4
\end{array}
\right).
\end{align*}

Let $Q$ (resp. $Q^+$) and $P$ (resp. $P^+$) denote the $\mathbb{Z}$-span (resp. $\mathbb{Z}_{\geq 0}$-span) of the simple roots and fundamental weights respectively. There is a partial order $\leq$ on $P$ such that $\lambda \leq \lambda'$ if and only if $\lambda' - \lambda \in Q^+$.

The quantum affine algebra $U_q\hat{\mathfrak{g}}$ is a $\mathbb{C}(q)$-algebra generated by $x_{i, n}^{\pm}$ ($i\in I, n\in \mathbb{Z}$), $k_i^{\pm 1}$ $(i\in I)$, $h_{i, n}$ ($i\in I, n\in \mathbb{Z}\backslash \{0\}$) and central elements $c^{\pm 1/2}$, subject to certain relations, see \cite{Dri88}. The algebra $U_q\hat{\mathfrak{g}}$ is a Hopf algebra.

Let $U_q\mathfrak{g}$ be the quantized enveloping algebra of $\mathfrak{g}$. The subalgebra of $U_q\hat{\mathfrak{g}}$ generated by $(k_i^{\pm})_{i\in I}, (x_{i, 0}^{\pm})_{i\in I}$ is a Hopf subalgebra of $U_q\hat{\mathfrak{g}}$ and is isomorphic as a Hopf algebra to $U_q\mathfrak{g}$. Therefore $U_q\hat{\mathfrak{g}}$-modules restrict to $U_q\mathfrak{g}$-modules. We denote by $\res(V)$ the restriction of a $U_q\hat{\mathfrak{g}}$-module $V$ to $U_q\mathfrak{g}$.

\subsection{The $q$-characters of finite-dimensional $U_q\hat{\mathfrak{g}}$-modules}
A $U_q\hat{\mathfrak{g}}$-module is called a module of type $1$ if $c^{\pm 1/2}$ acts as the identity on $V$ and
\begin{align} \label{decomposition into weight spaces}
V=\bigoplus_{\lambda \in P} V_{\lambda}, \ V_{\lambda} = \{v\in V  :   k_i v=q^{( \alpha_i, \lambda )} v \}.
\end{align}
We will only consider finite-dimensional type $1$ $U_q\hat{\mathfrak{g}}$-modules in this paper. The decomposition (\ref{decomposition into weight spaces}) of a finite-dimensional $U_q\hat{\mathfrak{g}}$-module $V$ into its $U_q\mathfrak{g}$-weight spaces can be refined as follows, see \cite{FR98}:
\begin{align}
V=\bigoplus_{\gamma} V_{\gamma}, \ \gamma=(\gamma_{i, \pm r}^{\pm})_{i\in I, r\in \mathbb{Z}_{\geq 0}}, \ \gamma_{i, \pm r}^{\pm} \in \mathbb{C},
\end{align}
where
\begin{align*}
V_{\gamma} = \{v\in V  :   \exists k\in \mathbb{N}, \forall i \in I, m \geq 0, (\phi_{i, \pm m}^{\pm} - \gamma_{i, \pm m}^{\pm})^{k} v =0 \}.
\end{align*}
Here $\phi_{i, \pm m}^{\pm}$ is defined by the following formula
\begin{align*}
\sum_{n=0}^{\infty} \phi_{i, \pm m}^{\pm} u^{\pm n} = k_i^{\pm 1} \exp\left( \pm( q-q^{-1} ) \sum_{m=1}^{\infty} h_{i, \pm m} u^{\pm m} \right). 
\end{align*}
If $\dim(V_{\gamma})>0$, then $\gamma$ is called an \textit{$l$-weight} of $V$, see Section 1.3 in \cite{Nak01}.

Let $\mathcal{P}$ denote the free Abelian multiplicative group of monomials in infinitely many formal variables $(Y_{i, a})_{i\in I, a\in \mathbb{C}^{\times}}$. In \cite{FR98} (Proposition 1 and the proof of Theorem 3), it is shown that there is a bijection $\gamma$ from from $\mathcal{P}$ to the set of $l$-weights of finite-dimensional $U_q\hat{\mathfrak{g}}$-modules. Therefore the monomials in $\mathcal{P}$ are also called $l$-weights. For simplicity, we denote $V_m=V_{\gamma(m)}$.

Let $\mathbb{Z}\mathcal{P} = \mathbb{Z}[Y_{i, a}^{\pm 1}]_{i\in I, a\in \mathbb{C}^{\times}}$. For $\chi \in \mathbb{Z}\mathcal{P}$, we write $m\in \mathcal{P}$ if the coefficient of $m$ in $\chi$ is non-zero.

The $q$-character of a $U_q\hat{\mathfrak{g}}$-module $V$ is given by
\begin{align*}
\chi_q(V) = \sum_{m\in \mathcal{P}} \dim(V_{m}) m \in \mathbb{Z}\mathcal{P},
\end{align*}
where $V_m = V_{\gamma(m)}$.

Let $\rep(U_q\hat{\mathfrak{g}})$ be the Grothendieck ring of finite-dimensional representations of $U_q\hat{\mathfrak{g}}$ and $[V]\in \rep(U_q\hat{\mathfrak{g}})$ the class of a finite-dimensional $U_q\hat{\mathfrak{g}}$-module $V$. The $q$-character map
\begin{eqnarray*}
\chi_q: \rep(U_q\hat{\mathfrak{g}}) & \to & \mathbb{Z}\mathcal{P}, \\
V & \mapsto & \chi_q(V),
\end{eqnarray*}
is an injective ring homomorphism, see \cite{FR98}, Theorem 3 (1).

Given finite-dimensional $U_q\hat{\mathfrak{g}}$-module $V$, denote $\mathscr{M}(V)$ the set of all monomials in $\chi_q(V)$. For each $j\in I$, a monomial $m=\prod_{i\in I, a\in \mathbb{C}^{\times}} Y_{i, a}^{u_{i, a}}$, where $u_{i, a}$ are some integers, is said to be \textit{$j$-dominant} (resp. \textit{$j$-anti-dominant}) if and only if $u_{j, a} \geq 0$ (resp. $u_{j, a} \leq 0$) for all $a\in \mathbb{C}^{\times}$. A monomial is called \textit{dominant} (resp. \textit{anti-dominant}) if and only if it is $j$-dominant (resp. $j$-anti-dominant) for all $j\in I$. Let $\mathcal{P}^+ \subset \mathcal{P}$ denote the set of all dominant monomials.

Let $V$ be a $U_q\hat{\mathfrak{g}}$-module and $m\in \mathscr{M}(V)$ a monomial. Let $v \in V_m$ be a non-zero vector. If
\begin{align*}
x_{i, r}^{+} \cdot v=0, \ \phi_{i, \pm t}^{\pm} \cdot v=\gamma(m)_{i, \pm t}^{\pm} v, \ \forall i\in I, r\in \mathbb{Z}, t\in \mathbb{Z}_{\geq 0},
\end{align*}
then $v$ is called an \textit{$l$-highest weight vector} with \textit{$l$-highest weight} $\gamma(m)$. The module $V$ is called an \textit{$l$-highest weight module} if $V=U_q\hat{\mathfrak{g}}\cdot v$ for some $l$-highest weight vector $v\in V$.

In \cite{CP94}, \cite{CP95a}, it is shown that there is a bijection between the set of isomorphism classes of finite-dimensional irreducible $l$-highest weight $U_q\hat{\mathfrak{g}}$-modules of type $1$. Let $L(m_+)$ denote the irreducible $l$-highest weight $U_q\hat{\mathfrak{g}}$-module corresponding to $m_+\in \mathcal{P}^{+}$. We use $\chi_q(m_+)$ to denote $\chi_q(L(m_+))$ for $m_+ \in \mathcal{P}^+$.

We have the following well-known lemma, see \cite{CP95a}, Corollary 3.5.
\begin{lemma}
\label{contains in a larger set}
Let $m_1, m_2$ be two monomials. Then $L(m_1m_2)$ is a sub-quotient of $L(m_1) \otimes L(m_2)$. In particular, $\mathscr{M}(L(m_1m_2)) \subseteq \mathscr{M}(L(m_1))\mathscr{M}(L(m_2))$.   $\Box$
\end{lemma}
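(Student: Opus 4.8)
The plan is to realize $L(m_1m_2)$ as the irreducible quotient of the cyclic submodule of $L(m_1)\otimes L(m_2)$ generated by the tensor product of the two $l$-highest weight vectors, and then to deduce the monomial inclusion from the fact that $\chi_q$ is a ring homomorphism. So first I would fix $l$-highest weight vectors $v_1\in L(m_1)$ and $v_2\in L(m_2)$, meaning $x_{i,r}^{+}v_j=0$ and $\phi_{i,\pm t}^{\pm}v_j=\gamma(m_j)_{i,\pm t}^{\pm}v_j$ for $j=1,2$ and all $i\in I$, $r\in\mathbb{Z}$, $t\in\mathbb{Z}_{\geq 0}$. Since $U_q\hat{\mathfrak{g}}$ is a Hopf algebra, $L(m_1)\otimes L(m_2)$ is a finite-dimensional $U_q\hat{\mathfrak{g}}$-module, and the whole argument reduces to the single claim that $v:=v_1\otimes v_2$ is again an $l$-highest weight vector, with $l$-highest weight $\gamma(m_1m_2)$. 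This is consistent with $\gamma(m_1m_2)=\gamma(m_1)\gamma(m_2)$, which holds because the generating series $\sum_{t\geq 0}\gamma(m)_{i,\pm t}^{\pm}u^{\pm t}$ attached to a monomial $m\in\mathcal{P}$ is multiplicative in $m$.

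The heart of the matter, and the step I expect to be the main obstacle, is verifying this claim, since it involves the coproduct on the Drinfeld generators $x_{i,r}^{+}$ and $\phi_{i,\pm t}^{\pm}$ rather than on the Chevalley generators. Writing $\Phi_i^{\pm}(u):=\sum_{t\geq 0}\phi_{i,\pm t}^{\pm}u^{\pm t}$ for the generating series of the Cartan-type Drinfeld generators, the input I would invoke is the known triangular form of this coproduct (as established in the structure theory underlying \cite{CP95a}): modulo a suitable completion,
\[
\Delta\big(\Phi_i^{\pm}(u)\big)\equiv \Phi_i^{\pm}(u)\otimes\Phi_i^{\pm}(u),\qquad \Delta(x_{i,r}^{+})\equiv x_{i,r}^{+}\otimes 1+(\ast)\otimes x_{i,r}^{+},
\]
where every omitted correction term contains at least one raising generator $x_{j,s}^{+}$ acting in one of the two tensor slots. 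Applying these to $v$, all correction terms vanish because $x_{j,s}^{+}$ annihilates both $v_1$ and $v_2$; the two displayed terms for $x_{i,r}^{+}$ vanish for the same reason; and $\Delta\big(\Phi_i^{\pm}(u)\big)$ acts on $v$ as the product of the eigenvalue series of $v_1$ and $v_2$, which is exactly the eigenvalue series of $\gamma(m_1m_2)$. Hence $x_{i,r}^{+}v=0$ and $\phi_{i,\pm t}^{\pm}v=\gamma(m_1m_2)_{i,\pm t}^{\pm}v$ for all $i,r,t$, proving the claim.

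With the claim in hand, $W:=U_q\hat{\mathfrak{g}}\cdot v$ is an $l$-highest weight module with $l$-highest weight $\gamma(m_1m_2)$. As usual for highest weight modules, the sum of all submodules of $W$ not containing $v$ is again a proper submodule, so $W$ has a unique maximal proper submodule $W'$, and $W/W'\cong L(m_1m_2)$. Thus $L(m_1m_2)$ is a sub-quotient of $L(m_1)\otimes L(m_2)$, which is the first assertion.

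For the monomial statement, I would use that $\chi_q$ is an injective ring homomorphism, so that $\chi_q\big(L(m_1)\otimes L(m_2)\big)=\chi_q(L(m_1))\,\chi_q(L(m_2))$; in particular every monomial occurring in $L(m_1)\otimes L(m_2)$ is a product of a monomial of $L(m_1)$ with a monomial of $L(m_2)$, i.e.\ $\mathscr{M}(L(m_1)\otimes L(m_2))\subseteq\mathscr{M}(L(m_1))\mathscr{M}(L(m_2))$. Since $q$-characters are additive on short exact sequences, the monomials of the sub-quotient $L(m_1m_2)$ form a subset of those of $L(m_1)\otimes L(m_2)$, and therefore $\mathscr{M}(L(m_1m_2))\subseteq\mathscr{M}(L(m_1))\mathscr{M}(L(m_2))$, as claimed.
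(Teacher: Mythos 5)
The paper does not actually prove this lemma: it is stated with a reference to \cite{CP95a} (Corollary 3.5) and a $\Box$, so there is no in-paper argument to compare against. Your proof is, in substance, the standard argument underlying that citation, and it is correct. The one genuinely nontrivial input is exactly the one you isolate: the triangularity of the coproduct on the Drinfeld generators, $\Delta(\Phi_i^{\pm}(u))\equiv\Phi_i^{\pm}(u)\otimes\Phi_i^{\pm}(u)$ and $\Delta(x_{i,r}^{+})\equiv x_{i,r}^{+}\otimes 1+(\ast)\otimes x_{i,r}^{+}$ modulo terms that are killed by a pair of $l$-highest weight vectors. This is a theorem of Damiani/Chari--Pressley rather than a formal consequence of the Hopf structure, so your phrasing ("the known triangular form... as established in the structure theory underlying \cite{CP95a}") is the honest way to present it; quoting the precise statement (correction terms lie in $U X^{-}\otimes U X^{+}$, where $X^{\pm}$ are the ideals generated by the $x_{j,s}^{\pm}$) would make the vanishing on $v_1\otimes v_2$ completely explicit. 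A small simplification worth noting: the vanishing $x_{i,r}^{+}(v_1\otimes v_2)=0$ does not need the coproduct formula at all, since $x_{i,r}^{+}$ raises the ordinary $U_q\mathfrak{g}$-weight by $\alpha_i$ and $\lambda_1+\lambda_2+\alpha_i$ is not a weight of $L(m_1)\otimes L(m_2)$; the coproduct triangularity is only really needed for the $\Phi_i^{\pm}(u)$-eigenvalue computation. The remaining steps (unique irreducible quotient of the cyclic module generated by $v_1\otimes v_2$, multiplicativity of $\chi_q$, and additivity of $q$-characters on subquotients, which have nonnegative monomial multiplicities) are all as you say.
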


For $b\in \mathbb{C}^{\times}$, we define the shift of spectral parameter map $\tau_b: \mathbb{Z}\mathcal{P} \to \mathbb{Z}\mathcal{P}$ to be a ring homomorphism sending $Y_{i, a}^{\pm 1}$ to $Y_{i, ab}^{\pm 1}$. Let $m_1, m_2 \in \mathcal{P}^+$. If $\tau_b(m_1) = m_2$, then
\begin{align} \label{shift}
\tau_b \chi_q(m_1) = \chi_q(m_2).
\end{align}

\begin{definition} [{\cite{Nak04}, \cite{Her07}}]
A finite-dimensional $U_q\hat{\mathfrak{g}}$-module $V$ is said to be \textit{special} if and only if $\mathscr{M}(V)$ contains exactly one dominant monomial. It is called \textit{anti-special} if and only if $\mathscr{M}(V)$ contains exactly one anti-dominant monomial.  
\end{definition}

Clearly, if a module is special or anti-special, then it is irreducible.

Define $A_{i, a} \in \mathcal{P}, i\in I, a\in \mathbb{C}^{\times}$, by
\begin{align*}
& A_{1, a} = Y_{1, aq}Y_{1, aq^{-1}} Y_{2, a}^{-1}, \\
& A_{2, a} = Y_{2, aq}Y_{2, aq^{-1}} Y_{1, a}^{-1} Y_{3, a}^{-1}, \\
& A_{3, a} = Y_{3, aq^{2}}Y_{3, aq^{-2}} Y_{2, aq}^{-1} Y_{2, aq^{-1}}^{-1}.
\end{align*}
Let $\mathcal{Q}$ be the subgroup of $\mathcal{P}$ generated by $A_{i, a}, i\in I, a\in \mathbb{C}^{\times}$. Let $\mathcal{Q}^{\pm}$ be the monoids generated by $A_{i, a}^{\pm 1}, i\in I, a\in \mathbb{C}^{\times}$. There is a partial order $\leq$ on $\mathcal{P}$ such that
\begin{align}
m\leq m' \text{ if and only if } m'm^{-1}\in \mathcal{Q}^{+}. \label{partial order of monomials}
\end{align}
For all $m_+ \in \mathcal{P}^+$, $\mathscr{M}(L(m_+)) \subset m_+\mathcal{Q}^{-}$, see \cite{FM01}, Theorem 4.1.

Let $m$ be a monomial. If for all $a \in \mathbb{C}^{\times}$ and $i\in I$, we have the property: if the power of $Y_{i, a}$ in $m$ is non-zero and the power of $Y_{j, aq^k}$ in $m$ is zero for all $j \in I, k\in \mathbb{Z}_{>0}$, then the power of $Y_{i, a}$ in $m$ is negative, then the monomial $m$ is called \textit{right negative}. For $i\in I, a\in \mathbb{C}^{\times}$, $A_{i,a}^{-1}$ is right-negative. A product of right-negative monomials is right-negative. If $m$ is right-negative and $m'\leq m$, then $m'$ is right-negative.

\subsection{Minimal affinizations of $U_q\mathfrak{g}$-modules}
Let $V(\mu)$ be a simple finite dimensional $U_q(\mathfrak{g})$-module of highest weight $\mu$. We say that a simple finite dimensional $U_q(\hat{\mathfrak{g}})$-module $L(m)$ is an affinization of $V(\mu)$ if $\omega(m) = \mu$, where $\omega: \mathcal{P} \to P$ is the homomorphism of abelian groups defined by $\omega(Y_{i, a}) = \omega_i$, $i \in I$, see \cite{C95}. Two affinizations are said to be equivalent if they are isomorphic as $U_q(\mathfrak{g})$-modules. 

Let $V$ be a finite dimensional $U_q(\mathfrak{g})$-module. Then $V$ can be decomposed as a direct sum of simple $U_q(\mathfrak{g})$-modules. For each $\lambda \in P^+$, let $m_{\lambda}(V)$ denote the multiplicity of $V(\lambda)$ in $V$. Let $[[L(m)]]$ denote the equivalent class of $L(m)$ and $\mathcal{Q}_{\mu}$ denote the set of equivalent classes of affinizations of $V(\mu)$. There is a partial order "$\leq $" in $\mathcal{Q}_{\mu}$ defined as follows. For $[[L(m)]], [[L(m')]] \in \mathcal{Q}_{\mu}$, $[[L(m)]] \leq [[L(m')]]$ if and only if for all $\mu \in P^+$ one of the following holds:
\begin{enumerate}[(i)]
\item $m_{\mu}(L(m)) \leq m_{\mu}(L(m'))$;

\item there is some $\nu > \mu$ such that $m_{\nu}(L(m)) < m_{\mu}(L(m'))$.
\end{enumerate}
An affinization in $\mathcal{Q}_{\mu}$ is called a minimal affinization if it is minimal with respect to the partial order "$\leq$". If $[[L(m)]]$ is a minimal affinization, then we also call the module $L(m)$ a minimal affinization.

The \textit{minimal affinizations} of type $C_3$ are classified in \cite{CP95b}. Let $\mathfrak{g}$ be the simple Lie algebra of type $C_3$ and $V(\lambda)$ the $U_q(\mathfrak{g})$ module of weight $\lambda$, where $\lambda = m_1 \omega_1 + m_2 \omega_2 + m_3 \omega_3$ and $m_1, m_2, m_3 \in \mathbb{Z}_{\geq 0}$. A simple $U_q \hat{\mathfrak{g}}$-module $L(m_+)$ is a minimal affinization of $V(\lambda)$ if and only if $m_+$ is one of the following monomials
\begin{align*}
T_{m_3, m_2, m_1}^{(s)} = \left( \prod_{i=0}^{m_3-1} Y_{3, aq^{s+4i}} \right) \left( \prod_{i=0}^{m_2-1} Y_{2, aq^{s+4k+2i+1}} \right) \left( \prod_{i=0}^{m_1-1} Y_{1, aq^{s+4k+2\ell+2i+2}} \right),
\end{align*}
\begin{align*}
\tilde{T}_{m_1, m_2, m_3}^{(s)} = \left( \prod_{i=0}^{m_1-1} Y_{1, aq^{s+2i}} \right) \left( \prod_{i=0}^{m_2-1} Y_{2, aq^{s+2k+2i+1}} \right) \left( \prod_{i=0}^{m_3-1} Y_{3, aq^{s+2k+2\ell+4i+4}} \right),
\end{align*}
for some $a\in \mathbb{C}^{\times}$, see \cite{CP95b}. In particular, when $m_1=0, m_2=0$ or $m_2=0, m_3=0$ or $m_1=0, m_3=0$, the minimal affinization $L(m_+)$ is a \textit{Kirillov-Reshetikhin module}.

Let $L(m_+)$ be a Kirillov-Reshetikhin module. It is shown in \cite{Her06} that any non-highest monomial in $\mathscr{M}(L(m_+))$ is right-negative and hence $L(m_+)$ is special.

\subsection{The $q$-characters of $U_q\hat{\mathfrak{sl}}_2$-modules and the FM algorithm}

The $q$-characters of $U_q\hat{\mathfrak{sl}}_2$-modules are well-understood, see \cite{CP91}, \cite{CP95a}, \cite{FR98}. We recall the results here.

Let $W_{k}^{(a)}$ be the irreducible representation $U_q\hat{\mathfrak{sl}}_2$ with
highest weight monomial
\begin{align*}
X_{k}^{(a)}=\prod_{i=0}^{k-1} Y_{aq^{k-2i-1}},
\end{align*}
where $Y_a=Y_{1, a}$. Then the $q$-character of $W_{k}^{(a)}$ is given by
\begin{align*}
\chi_q(W_{k}^{(a)})=X_{k}^{(a)} \sum_{i=0}^{k} \prod_{j=0}^{i-1} A_{aq^{k-2j}}^{-1},
\end{align*}
where $A_a=Y_{aq^{-1}}Y_{aq}$.

For $a\in \mathbb{C}^{\times}, k\in \mathbb{Z}_{\geq 1}$, the set $\Sigma_{k}^{(a)}=\{aq^{k-2i-1}\}_{i=0, \ldots, k-1}$ is called a \textit{string}. Two strings $\Sigma_{k}^{(a)}$ and $\Sigma_{k'}^{(a')}$ are said to be in \textit{general position} if the union $\Sigma_{k}^{(a)} \cup \Sigma_{k'}^{(a')}$ is not a string or $\Sigma_{k}^{(a)} \subset \Sigma_{k'}^{(a')}$ or $\Sigma_{k'}^{(a')} \subset \Sigma_{k}^{(a)}$.

Let $L(m_+)$ denote the irreducible $U_q\hat{\mathfrak{sl}}_2$-module with
highest weight monomial $m_+$. Let $m_{+} \neq 1$, $m_+ \in \mathbb{Z}[Y_a]_{a\in \mathbb{C}^{\times}}$, be a dominant monomial. Then $m_+$ can be uniquely written in the form
\begin{align*}
m_+=\prod_{i=1}^{s} \left( \prod_{b\in \Sigma_{k_i}^{(a_i)}} Y_{b} \right),
\end{align*}
up to a permutation, where $s$ is an integer, $\Sigma_{k_i}^{(a_i)}, i=1, \ldots, s$, are strings which are pairwise in general position and
\begin{align*}
L(m_+)=\bigotimes_{i=1}^s W_{k_i}^{(a_i)}, \qquad \chi_q(m_+)=\prod_{i=1}^s
 \chi_q(W_{k_i}^{(a_i)}).
\end{align*}

For $j\in I$, let
\begin{align*}
\beta_j : \mathbb{Z}[Y_{i, a}^{\pm 1}]_{i\in I; a\in \mathbb{C}^{\times}} \to \mathbb{Z}[Y_{a}^{\pm 1}]_{a\in \mathbb{C}^{\times}}
\end{align*}
be the ring homomorphism which sends, for all $a\in \mathbb{C}^{\times}$, $Y_{k, a} \mapsto 1$ for $k\neq j$ and $Y_{j, a} \mapsto Y_{a}$.

Let $V$ be a $U_q\hat{\mathfrak{g}}$-module. Then $\beta_i(\chi_q(V))$, $i=1, 2$, is the $q$-character of $V$ considered as a $U_{q_i}(\hat{\mathfrak{sl}_2})$-module. In Section 5 of \cite{FM01}, a powerful algorithm is proposed to compute the $q$-characters of $U_q\hat{\mathfrak{g}}$-modules. The algorithm is called the FM algorithm. The FM algorithm recursively computes the minimal possible $q$-character which contains $m_+$ and is consistent when restricted to $U_{q_i}(\hat{\mathfrak{sl}_2})$, $i \in I$. Although the FM algorithm does not produce the correct $q$-characters for some modules, it works for a large class of modules. In particular, if a module $L(m_+)$ is special, then the FM algorithm produces the correct $q$-character $\chi_q(m_+)$, see \cite{FM01}.

\subsection{Truncated $q$-characters}

In this paper, we need to use the concept truncated $q$-characters, see \cite{HL10}, \cite{MY12b}.
Given a set of monomials $\mathcal{R} \subset \mathcal{P}$, let $\mathbb{Z}\mathcal{R} \subset \mathbb{Z}\mathcal{P}$ denote the $\mathbb{Z}$-module of formal linear combinations of elements of $\mathcal{R}$ with integer coefficients. Define
\begin{align*}
\text{trunc}_{\mathcal{R}}: \mathcal{P} \to \mathcal{R}; \quad m\mapsto \begin{cases} m & \text{if } m\in \mathcal{R}, \\ 0 & \text{if } m \not\in \mathcal{R}, \end{cases}
\end{align*}
and extend $\text{trunc}_{\mathcal{R}}$ as a $\mathbb{Z}$-module map $\mathbb{Z}\mathcal{P} \to \mathbb{Z}\mathcal{R}$.

Given a subset $U\subset I \times \mathbb{C}^{\times}$, let $\mathcal{Q}_U$ be the subgroups of $\mathcal{Q}$ generated by $A_{i,a}$ with $(i,a)\in U$. Let $\mathcal{Q}_U^{\pm}$ be the monoid generated by $A_{i,a}^{\pm 1}$ with $(i,a)\in U$. The polynomial $\text{trunc}_{m_{+}\mathcal{Q}_U^{-}} \: \chi_q(m_{+})$ is called \textit{the $q$-character of $L(m_{+})$ truncated to $U$}.

The following theorem can be used to compute some truncated $q$-characters.

\begin{theorem}[{ Theorem 2.1, \cite{MY12b} }]  \label{truncated}
Let $U\subset I \times \mathbb{C}^{\times}$ and $m_{+} \in \mathcal{P}^+$. Suppose that $\mathcal{M} \subset \mathcal{P}$ is a finite set of distinct monomials such that
\begin{enumerate}[(i)]
\item $\mathcal{M} \subset m_+\mathcal{Q}_U^{-}$,
\item $\mathcal{P}^+ \cap \mathcal{M} = \{m_{+}\}$,
\item for all $m\in \mathcal{M}$ and all $(i,a)\in U$, if $mA_{i,a}^{-1} \not\in \mathcal{M}$, then $mA_{i,a}^{-1}A_{j,b} \not\in \mathcal{M}$ unless $(j,b)=(i,a)$,
\item for all $m \in \mathcal{M}$ and all $i \in I$, there exists a unique $i$-dominant monomial $M\in \mathcal{M}$ such that
\begin{align*}
\text{trunc}_{\beta_i(M\mathcal{Q}_U^{-})} \: \chi_q(\beta_i(M)) = \sum_{m'\in m\mathcal{Q}_{\{i\} \times \mathbb{C}^{\times}} \cap \mathcal{M} } \beta_i(m').
\end{align*}
\end{enumerate}
Then
\begin{align*}
\text{trunc}_{m_{+}\mathcal{Q}_U^{-}} \: \chi_q(m_+) = \sum_{m\in \mathcal{M}} m.
\end{align*}
\end{theorem}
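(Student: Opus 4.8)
The plan is to compare two elements of $\mathbb{Z}\mathcal{P}$: the genuine truncated character $\chi := \text{trunc}_{m_{+}\mathcal{Q}_U^{-}}\chi_q(m_+)$ and the candidate $\bar{\chi} := \sum_{m\in\mathcal{M}} m$, and to prove they coincide by a downward induction along the partial order $\leq$, starting from the maximal monomial $m_+$. Both are supported on $m_{+}\mathcal{Q}_U^{-}$: this holds for $\bar{\chi}$ by hypothesis (i), and for $\chi$ because $\mathscr{M}(L(m_+))\subset m_+\mathcal{Q}^{-}$ and we have truncated to $m_{+}\mathcal{Q}_U^{-}$. At the top, the coefficient of $m_+$ equals $1$ on both sides: it is the $l$-highest weight, of multiplicity one, in $\chi$, while $m_+\in\mathcal{M}$ and by (ii) no other dominant monomial of $\mathcal{M}$ interferes. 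The goal is thus to match multiplicities one monomial at a time, working downward.

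The engine of the induction is $\mathfrak{sl}_2$-consistency in each direction $i\in I$. For the genuine character this is automatic: $\beta_i(\chi_q(m_+))$ is the $q$-character of $L(m_+)$ regarded as a $U_{q_i}\hat{\mathfrak{sl}}_2$-module, hence a nonnegative integer combination of the explicitly known string characters $\chi_q(W_k^{(a)})$, each of which is multiplicity-free along a single descending chain of $A_a^{-1}$-shifts. For the candidate, consistency is exactly the content of hypothesis (iv) in its truncated form: the $\beta_i$-images of the monomials of $\mathcal{M}$ lying in one $i$-string, namely $m\mathcal{Q}_{\{i\}\times\mathbb{C}^{\times}}\cap\mathcal{M}$, assemble into the truncated $\mathfrak{sl}_2$-character $\text{trunc}_{\beta_i(M\mathcal{Q}_U^{-})}\chi_q(\beta_i(M))$ of a single $i$-dominant seed $M$, whose uniqueness guarantees that each monomial of $\mathcal{M}$ sits in exactly one $i$-string, so the direction-$i$ bookkeeping is well defined.

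With these in place I would run the induction as follows. Assume $\chi$ and $\bar{\chi}$ agree on every monomial strictly above $m$ (with respect to $\leq$) inside $m_{+}\mathcal{Q}_U^{-}$, and let $m$ be a maximal monomial on which they could still differ. The clean case is $m$ non-dominant: choose $i$ with $m$ not $i$-dominant, so that $\beta_i(m)$ is a non-highest monomial of its $\mathfrak{sl}_2$-string and its multiplicity in either $\mathfrak{sl}_2$-consistent character is forced by the monomials lying strictly above it in that string. Those monomials are obtained from $m$ by multiplying by products of $A_{i,a}$, hence are $>m$ in $\leq$, so they agree by the inductive hypothesis; consequently the multiplicity at $m$ agrees too. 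Here hypothesis (iii) controls the truncation boundary: it prevents a monomial $m A_{i,a}^{-1}$ that has been cut away from re-entering through a detour $m A_{i,a}^{-1}A_{j,b}$, so that the $i$-strings of $\chi$ and of $\bar{\chi}$ are severed at exactly the same places and the multiplicity count above is not corrupted where a string meets the boundary of the region.

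The main obstacle is the reconciliation of the $\mathfrak{g}$-level multiplicities with the $\mathfrak{sl}_2$-level data, and this is where all four hypotheses must interact. Two difficulties combine. First, a single $\beta_i$ collapses many distinct monomials of $\mathcal{P}$ onto one $\mathfrak{sl}_2$-monomial, so the multiplicity of an individual $m$ in $\chi_q(m_+)$ is not readable from any one restriction; only the combined consistency in all directions, anchored by the order $\leq$, pins it down, which is the Frenkel--Mukhin heart of the matter. Second, when $m$ is itself dominant and $m\neq m_+$, it is a string-top in every direction and so is not forced from above by any $\beta_i$; one must instead show that $\chi$ admits no dominant monomial besides $m_+$ inside $m_{+}\mathcal{Q}_U^{-}$, matching the exclusion imposed on $\mathcal{M}$ by (ii). Verifying that (i)--(iv) supply precisely the data the genuine truncated character is bound to satisfy -- so that two $\mathfrak{sl}_2$-consistent solutions sharing the seed $m_+$ cannot diverge -- is the crux; once the inductive step is secured at every monomial, the equality $\chi=\bar{\chi}$ is the desired conclusion.
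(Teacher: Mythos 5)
This statement is quoted from \cite{MY12b} (Theorem 2.1) and the paper gives no proof of it, so there is nothing internal to compare against; I can only judge your proposal on its own terms. As a proof it has a genuine gap, and you in effect concede this yourself: you describe the reconciliation of the $\mathfrak{sl}_2$-level data with the $\mathfrak{g}$-level multiplicities as ``the crux'' and then stop, so the decisive step of the argument is announced rather than carried out. Two specific points. First, your induction leans on the claim that every occurrence of a non-$i$-dominant monomial $m$ in $\chi_q(m_+)$ sits inside an $i$-string headed by a strictly larger $i$-dominant monomial of the character. That is true, but it is not a consequence of the mere fact that $\beta_i(\chi_q(m_+))$ is an $\hat{\mathfrak{sl}}_2$ $q$-character: one needs the stronger decomposition theorem (Frenkel--Mukhin, Theorem 4.1 of \cite{FM01}, refined in \cite{Her06}) that $\chi_q(V)$ itself splits, for each $i$, into pieces $\chi_k$ with $\mathscr{M}(\chi_k)\subset M_k\mathcal{Q}^-_{\{i\}\times\mathbb{C}^\times}$ and $\beta_i(\chi_k)=\chi_q(\beta_i(M_k))$, together with the fact that non-head members of an $\hat{\mathfrak{sl}}_2$-string are never dominant. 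You use this lift implicitly but never invoke it, and without it the ``forced from above'' step does not follow; moreover the compatibility of this decomposition with the truncation to $m_+\mathcal{Q}_U^-$ (your assertion that the strings of $\chi$ and of $\bar{\chi}$ are ``severed at exactly the same places'') is itself a claim that requires proof, not just an appeal to hypothesis (iii).

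Second, and more seriously, your proposed treatment of the case where the maximal differing monomial $m$ is dominant is not viable as stated. You propose to ``show that $\chi$ admits no dominant monomial besides $m_+$ inside $m_+\mathcal{Q}_U^-$'' as a separate input, but this is essentially the conclusion of the theorem in the situations where it is applied: in Section \ref{prove special} the whole point of invoking Theorem \ref{truncated} is to deduce that the truncated character has a unique dominant monomial, so it cannot be assumed beforehand. A dominant $m\neq m_+$ would head its own $i$-string in every direction $i$ and is therefore invisible to the ``forced from above'' mechanism; ruling it out has to come from the interplay of all four hypotheses (the finiteness of $\mathcal{M}$, the uniqueness of the $i$-dominant head in (iv), and the boundary condition (iii)) inside the induction itself, and no such argument appears in your write-up. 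Until that case is closed, the induction does not terminate and the equality $\chi=\bar{\chi}$ is not established.
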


Here $\chi_q(\beta_i(M))$ is the $q$-character of the irreducible $U_{q_i}(\hat{\mathfrak{sl}_2})$-module with highest weight monomial $\beta_i(M)$ and $\text{trunc}_{\beta_i(M\mathcal{Q}_{U}^{-})}$ is the polynomial obtained from $\chi_q(\beta_i(M))$ by keeping only the monomials of $\chi_q(\beta_i(M))$ in the set $\beta_i(M\mathcal{Q}_U^-)$.

\section{Extended T-system of type $C_3$} \label{main results}
In this section, $\mathfrak{g}$ is the simple Lie algebra of type $C_3$. We will describe the extended T-system which contains all minimal affinizations of type $C_3$ whose weights are of the form $\lambda_1 \omega_1 + \lambda_2 \omega_2 + \lambda_3 \omega_3$, where at least one of $\lambda_1, \lambda_2, \lambda_3$ are $0$. The extended T-system consisting of four sub-systems I, II, III, IV. Each of the systems I, II, III, IV is closed.

In the following, we fix $a \in \mathbb{C}^{\times}$ and use $i_s^{\pm}$ to denote $Y_{i, aq^s}^{\pm}$, $i \in I, s \in \mathbb{Z}$. 
\subsection{Fundamental $q$-characters of type $C_3$} First we use the FM algorithm to compute the $q$-characters of the fundamental modules type $C_3$.

\begin{lemma} \label{fundamental q-characters}

The the $q$-characters of the fundamental modules of type $C_3$ are
\begin{eqnarray*}
\chi_q(1_0) = 1_{0}  +  1_{2}^{-1}  2_{1}  +  2_{3}^{-1}  3_{2}  +  2_{5}  3_{6}^{-1}  +  1_{6}  2_{7}^{-1} + 1_{8}^{-1},
\end{eqnarray*}

\begin{eqnarray*}
\chi_q(2_0) & = & 2_{0}  + 1_{1}  2_{2}^{-1}  3_{1} + 1_{3}^{-1}  3_{1} +  1_{1}  2_{4}  3_{5}^{-1} +  1_{3}^{-1}  2_{2}  2_{4}  3_{5}^{-1}  + 1_{1}  1_{5}  2_{6}^{-1} \\
 & & +  1_{3}^{-1}  1_{5}  2_{2}  2_{6}^{-1} +  1_{5}  2_{4}^{-1}  2_{6}^{-1}  3_{3} +  1_{1}  1_{7}^{-1}  +  1_{3}^{-1}  1_{7}^{-1}  2_{2}  +  1_{7}^{-1}  2_{4}^{-1}  3_{3}  \\
 & & + 1_{5}  3_{7}^{-1} + 1_{7}^{-1}  2_{6}  3_{7}^{-1}  +  2_{8}^{-1},
\end{eqnarray*}

\begin{eqnarray*}
\chi_q(3_0) & = & 3_{0}  + 2_{1}  2_{3}  3_{4}^{-1} +  1_{4}  2_{1}  2_{5}^{-1} +  1_{2}  1_{4}  2_{3}^{-1}  2_{5}^{-1}  3_{2} + 1_{6}^{-1}  2_{1} +  1_{2}  1_{6}^{-1}  2_{3}^{-1}  3_{2} \\
 & & + 1_{4}^{-1}  1_{6}^{-1}  3_{2}  +  1_{2}  1_{4}  3_{6}^{-1} + 1_{2}  1_{6}^{-1}  2_{5}  3_{6}^{-1} +  1_{4}^{-1}  1_{6}^{-1}  2_{3}  2_{5}  3_{6}^{-1} +  1_{2}  2_{7}^{-1} \\
  & & +  1_{4}^{-1}  2_{3}  2_{7}^{-1}  +  2_{5}^{-1}  2_{7}^{-1}  3_{4} +  3_{8}^{-1}. \qquad \qquad \qquad \qquad \qquad \qquad \Box
\end{eqnarray*}
\end{lemma}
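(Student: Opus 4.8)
The plan is to exploit the fact, recalled just above, that each fundamental module $L(i_0) = L(Y_{i,a})$ is a Kirillov--Reshetikhin module (the special case $\lambda = \omega_i$), and that every such module is \emph{special} by the result of \cite{Her06} quoted at the end of the previous subsection. Since the Frenkel--Mukhin algorithm produces the exact $q$-character of any special module, it suffices to run the FM algorithm with highest weight monomial $i_0$ for $i = 1, 2, 3$ and to read off the output. Thus no representation-theoretic input is needed beyond specialness; the content of the lemma is the termination data of the algorithm.

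Concretely, I would start each computation from $m_+ = i_0$ and repeatedly impose the FM consistency condition that, for every $j \in I$, the specialization $\beta_j(\chi_q(i_0))$ coincide with the $q$-character of the associated $U_{q_j}(\hat{\mathfrak{sl}}_2)$-module, whose monomials are governed by the explicit formula $\chi_q(W_k^{(a)}) = X_k^{(a)} \sum_{i=0}^{k} \prod_{j=0}^{i-1} A_{aq^{k-2j}}^{-1}$ recalled above. In the present notation this amounts to generating new monomials from the known ones by multiplying by the lowering factors $A_{1,s}^{-1} = 1_{s+1}^{-1}\, 1_{s-1}^{-1}\, 2_s$, $A_{2,s}^{-1} = 2_{s+1}^{-1}\, 2_{s-1}^{-1}\, 1_s\, 3_s$, and $A_{3,s}^{-1} = 3_{s+2}^{-1}\, 3_{s-2}^{-1}\, 2_{s+1}\, 2_{s-1}$, where at each stage the shift $s$ is forced by the $j$-dominance of the current monomial, and iterating until an anti-dominant monomial is reached.

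For $i = 1$ this is a single chain: $1_0 \to 1_2^{-1} 2_1 \to 2_3^{-1} 3_2 \to 2_5\, 3_6^{-1} \to 1_6\, 2_7^{-1} \to 1_8^{-1}$, obtained by applying in turn $A_{1,1}^{-1}, A_{2,2}^{-1}, A_{3,4}^{-1}, A_{2,6}^{-1}, A_{1,7}^{-1}$; the terminal monomial $1_8^{-1}$ is anti-dominant, the algorithm halts, and the six resulting monomials match $\dim V(\omega_1) = 6$. For $i = 2, 3$ I would carry out the same procedure; there the $q$-character has $14$ monomials (matching $\dim V(\omega_2) = \dim V(\omega_3) = 14$), and the underlying graph of lowering operations branches rather than forming a single chain, e.g. from $1_1\, 2_2^{-1}\, 3_1$ one may apply either $A_{1,2}^{-1}$ to reach $1_3^{-1} 3_1$ or $A_{3,3}^{-1}$ to reach $1_1\, 2_4\, 3_5^{-1}$.

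The main obstacle is the bookkeeping in these branching cases $i = 2, 3$: a single monomial may admit several admissible lowerings, and one must verify the FM consistency condition across all three restrictions $\beta_1, \beta_2, \beta_3$ simultaneously, checking in particular that the multiplicities predicted by the $U_{q_2}(\hat{\mathfrak{sl}}_2)$- and $U_{q_3}(\hat{\mathfrak{sl}}_2)$-strings agree and that no spurious dominant monomial is produced (in accordance with specialness). Because the modules are already known to be special, the algorithm is guaranteed to terminate with the correct $q$-character, so this step is purely computational and carries no conceptual risk; the displayed formulas are then obtained by collecting the monomials generated along the way.
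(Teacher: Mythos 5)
Your proposal is correct and follows exactly the route the paper takes: the fundamental modules are Kirillov--Reshetikhin modules, hence special by \cite{Her06}, so the FM algorithm is guaranteed to output the exact $q$-character, and the lemma records the result of that computation. Your explicit chain for $\chi_q(1_0)$ and the sample branchings for $i=2,3$ are consistent with the displayed formulas and with the monomial counts $6$, $14$, $14$.
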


\subsection{The system I}
For $s\in \mathbb{Z}, k, \ell, m \in \mathbb{Z}_{\geq 0}$, we define the following monomials.
\begin{align*}
T_{k, \ell, m}^{(s)} = \left( \prod_{i=0}^{k-1} 3_{s+4i} \right) \left( \prod_{i=0}^{\ell-1} 2_{s+4k+2i+1} \right) \left( \prod_{i=0}^{m-1} 1_{s+4k+2\ell+2i+2} \right),
\end{align*}

\begin{align*}
\tilde{T}_{k, \ell, m}^{(s)} = \left( \prod_{i=0}^{k-1} 1_{s+2i} \right) \left( \prod_{i=0}^{\ell-1} 2_{s+2k+2i+1} \right) \left( \prod_{i=0}^{m-1} 3_{s+2k+2\ell+4i+4} \right),
\end{align*}

\begin{align*}
S_{k, \ell}^{(s)} = \left( \prod_{i=0}^{k-1} 2_{s+2i} \right) \left( \prod_{i=0}^{\ell-1} 2_{s+2k+2i+4} \right),
\end{align*}

\begin{align*}
R_{k, \ell, m}^{(s)} = \left( \prod_{i=0}^{k-1} 2_{s+2i} \right) \left( \prod_{i=0}^{\ell-1} 1_{s+2k+2i+1} \right) \left( \prod_{i=0}^{m-1} 3_{s+2k+4i+3} \right).
\end{align*}

Let $T$ be a monomial and $\mathcal{T}$ the highest $l$-weight module of $U_q \hat{\mathfrak{g}}$ with highest $l$-weight $T$. For $k, \ell \in \mathbb{Z}_{\geq 0}, s\in \mathbb{Z}$, we have the following trivial relations:
\begin{equation}
\begin{aligned}
& \mathcal{\tilde{T}}_{k, 0, 0}^{(s)} = \mathcal{T}_{0, 0, k}^{(s-2)}, \ \mathcal{\tilde{T}}_{0, k, 0}^{(s)} = \mathcal{T}_{0, k, 0}^{(s)}, \ \mathcal{\tilde{T}}_{0, 0, k}^{(s)} = \mathcal{T}_{k, 0, 0}^{(s+4)}, \\
& \mathcal{S}_{k, 0}^{(s)} = \mathcal{T}_{0, k, 0}^{(s-1)}, \ \mathcal{S}_{0, k}^{(s)} = \mathcal{T}_{0, k, 0}^{(s+4)}, \\
& \mathcal{R}_{ k , \ell, 0}^{(s)} = \mathcal{T}_{ 0, k , \ell}^{(s-1)}, \ \mathcal{R}_{k, 0, 0}^{(s)} = \mathcal{T}_{0, k, 0}^{(s)}, \ \mathcal{R}_{0, k, 0}^{(s)} = \mathcal{T}_{0, 0, k}^{(s-1)}, \ \mathcal{R}_{0, 0, k}^{(s)} = \mathcal{T}_{k, 0, 0}^{(s+3)}. \label{trivial relations 1}
\end{aligned}
\end{equation}

\begin{theorem} \label{special}
The modules $\mathcal{T}_{k, \ell, 0}^{(s)}$, $\mathcal{T}_{k, 0, m}^{(s)}$, $\mathcal{T}_{0, \ell, r}^{(s)}$, $\mathcal{\tilde{T}}_{k, 0, m}^{(s)}$, $\mathcal{S}_{k, \ell}^{(s)}$, $\mathcal{R}_{k, 2\ell, \ell}^{(s)}$, $\mathcal{R}_{k, 2\ell+1, \ell}^{(s)}$, $\mathcal{R}_{k, 2\ell+2, \ell}^{(s)}$ are special. In particular, we can use the FM algorithm to compute these modules.
\end{theorem}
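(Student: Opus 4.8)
The goal is to show that each of the listed $l$-highest weight modules is special, i.e. that its $q$-character contains exactly one dominant monomial, namely the highest weight monomial itself. The standard strategy here, following the method of \cite{Her06} and \cite{MY12b}, is to show that \emph{every} non-highest monomial appearing in the $q$-character is right-negative; since a right-negative monomial can never be dominant, this immediately forces the highest weight monomial to be the unique dominant monomial. I would carry this out module by module, but organizing the argument around the general inclusion $\mathscr{M}(L(m_+)) \subset m_+\mathcal{Q}^-$ from \cite{FM01}, together with the elementary facts recalled in the excerpt: a product of right-negative monomials is right-negative, and any monomial $m' \leq m$ with $m$ right-negative is again right-negative.

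First I would treat the highest weight monomial $m_+$ of each module. Each such $m_+$ is a product of factors $Y_{i, aq^s} = i_s$. To pass to a non-highest monomial one multiplies $m_+$ by a product of $A_{j,b}^{-1}$ (an element of $\mathcal{Q}^-$). The key computation is to locate, for each family, the \emph{rightmost} variable appearing in $m_+$ (the one with the largest spectral-parameter exponent $s$) and to check that multiplying by any admissible $A_{j,b}^{-1}$ either leaves a right-negative monomial or, if it produces a dominant-looking factor, is impossible within $m_+\mathcal{Q}^-$. Concretely: each $A_{j,b}^{-1}$ is right-negative (stated in the excerpt), so once \emph{any} $A_{j,b}^{-1}$ factor is applied the resulting monomial is a product of the $i_s$'s with at least one $A^{-1}$, and I would verify that the highest-parameter contributions force right-negativity. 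Because the defining products for $T_{k,\ell,m}^{(s)}$, $\tilde{T}_{k,\ell,m}^{(s)}$, $S_{k,\ell}^{(s)}$, $R_{k,\ell,m}^{(s)}$ are all built from strictly increasing spectral parameters in a specific staggered pattern (the exponents increase as one moves from node $3$ to node $2$ to node $1$, matching the $C_3$ geometry), the rightmost variable is always pinned down, and this is exactly what makes the right-negativity propagate.

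The cleanest uniform argument, which I would try to push through, is to reduce everything to the Kirillov--Reshetikhin (KR) case plus a monomial-comparison step. Recall from the excerpt that KR modules are already known to be special, with every non-highest monomial right-negative. For the mixed families I would attempt to show that the highest weight monomial $m_+$ is a product of KR-type building blocks whose strings are arranged so that, after applying any $A_{j,b}^{-1}$, the lowering happens at the rightmost active variable, whence right-negativity. I would also invoke the shift-of-spectral-parameter invariance \eqref{shift} to fix $s=0$ throughout, reducing the number of cases by one parameter. Special care is needed for the families indexed $R_{k,2\ell,\ell}^{(s)}$, $R_{k,2\ell+1,\ell}^{(s)}$, $R_{k,2\ell+2,\ell}^{(s)}$ and for $\mathcal{T}_{0,\ell,r}^{(s)}$ with $r \in \{0,1,2\}$, since these are the genuinely non-KR cases where the three nodes interact.

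\textbf{The main obstacle.} The hard part will be the families involving node $3$ together with node $2$ (and node $1$), because node $3$ is the long root: $A_{3,a}^{-1} = Y_{3,aq^2}^{-1}Y_{3,aq^{-2}}^{-1} Y_{2,aq}Y_{2,aq^{-1}}$ can \emph{create} positive powers of $Y_{2,\cdot}$, so applying it does not obviously preserve right-negativity at node $2$. Thus the naive ``product of right-negative factors'' argument can fail, and one must argue more carefully that any $2$-dominant or $3$-dominant monomial other than $m_+$ cannot occur. Here I expect the real work to be an application of Theorem \ref{truncated}: I would try to exhibit, for each module, an explicit finite set $\mathcal{M}$ of monomials satisfying conditions (i)--(iv) of that theorem, using the known $U_q\hat{\mathfrak{sl}}_2$ $q$-characters at each node to control the $\beta_i$-projections, and thereby certify that no second dominant monomial is introduced. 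The delicate verification is condition (iv), matching the $i$-dominant monomials against the $\mathfrak{sl}_2$ decompositions; getting the staggered spectral parameters to line up so that these local $\mathfrak{sl}_2$ computations are consistent is where the bulk of the case-by-case effort will lie.
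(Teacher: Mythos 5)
Your overall framing (right-negativity plus Theorem \ref{truncated}) covers only part of what is needed, and the part it does not cover is precisely where the difficulty of this theorem lies. For the families $\mathcal{T}_{k,0,m}^{(s)}$, $\mathcal{\tilde{T}}_{k,0,m}^{(s)}$ and $\mathcal{S}_{k,\ell}^{(s)}$ with $k\geq 2$, there exist monomials of the form $m_+A_{3,b}^{-1}$ (resp.\ $m_+A_{1,b}^{-1}$, $m_+A_{2,b}^{-1}\cdots$) with $b$ lying \emph{inside} the support of $m_+$ which are genuinely dominant — e.g.\ for $m_+=T_{k,0,\ell}^{(0)}$ the monomial $3_{0}3_{4}\cdots 3_{4k-12}\,2_{4k-7}2_{4k-5}\,1_{4k+2}\cdots 1_{4k+2\ell}$, and for $S_{k,\ell}^{(0)}$ three such monomials $n_1,n_2,n_3$. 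These are neither right-negative nor excluded by truncating away large spectral parameters, so your first mechanism says nothing about them. Your fallback, Theorem \ref{truncated}, is a \emph{verification} device: one must supply the complete set $\mathcal{M}$ of monomials of the truncated $q$-character and check conditions (i)--(iv), and to do so one must already have decided whether these dominant monomials belong to $\mathcal{M}$ — which is the very question at issue. For $\mathcal{T}_{k,\ell,0}^{(s)}$ and the table cases this works because the truncated character is a short explicit chain; for $\mathcal{T}_{k,0,m}^{(s)}$, $\mathcal{\tilde{T}}_{k,0,m}^{(s)}$, $\mathcal{S}_{k,\ell}^{(s)}$ with $k\geq 2$ the candidate $\mathcal{M}$ involves the full three-node interaction and you give no construction of it.

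The paper closes exactly this gap with a different device you do not mention: a two-fold tensor product embedding. One writes $m_+=m_1'm_2'=m_1''m_2''$ in two ways with special factors, so that $\mathscr{M}(L(m_+))\subset \mathscr{M}(\chi_q(m_1')\chi_q(m_2'))\cap\mathscr{M}(\chi_q(m_1'')\chi_q(m_2''))$; one classifies the (finitely many) dominant monomials of the first product by the FM algorithm, and then shows each non-highest one fails to occur in the second product because its unique expression $m_+\prod A_{i,a}^{-v_{i,a}}$ would force a monomial such as $m_1''A_{3,b}^{-1}$ to lie in $\mathscr{M}(L(m_1''))$, which the FM algorithm rules out. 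Without this (or some substitute argument excluding the extra dominant monomials), your proposal does not establish speciality for $\mathcal{T}_{k,0,m}^{(s)}$, $\mathcal{\tilde{T}}_{k,0,m}^{(s)}$ or $\mathcal{S}_{k,\ell}^{(s)}$ beyond $k=1$.
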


We will prove Theorem \ref{special} in Section \ref{prove special}.

For $k \in \mathbb{Z}_{\geq 1}$, we have the following relations in $\rep(U_q \hat{\mathfrak{g}})$ in the usual T-system, see \cite{KR90}, \cite{Her06},
\begin{align}
[ \mathcal{T}_{k-1, 0, 0}^{(s)} ] [ \mathcal{T}_{k-1, 0, 0}^{(s+4)} ] =[ \mathcal{T}_{k, 0, 0}^{(s)} ] [ \mathcal{T}_{k-2, 0, 0}^{(s+4)} ] + [ \mathcal{T}_{0, 2k-2,  0}^{s+1}], \label{usual T equation 1}
\end{align}

\begin{align}
[ \mathcal{T}_{0, k-1, 0}^{(s)} ] [ \mathcal{T}_{0, k-1, 0}^{(s+2)} ] =[ \mathcal{T}_{0, k, 0}^{(s)} ] [ \mathcal{T}_{0, k-2, 0}^{(s+2)} ] + [ \mathcal{T}_{k-1, 0,  0}^{s+1}][\mathcal{T}_{0, 0,  \lfloor \frac{k}{2} \rfloor}^{s+1}][\mathcal{T}_{ 0, 0,  \lfloor \frac{k-1}{2} \rfloor }^{s+3}], \label{usual T equation 2}
\end{align}

\begin{align}
[ \mathcal{T}_{k-1, 0, 0}^{(s)} ] [ \mathcal{T}_{k-1, 0, 0}^{(s+2)} ] =[ \mathcal{T}_{k, 0, 0}^{(s)} ] [ \mathcal{T}_{k-2, 0, 0}^{(s+2)} ] + [ \mathcal{T}_{0, k-1,  0}^{s+1}]. \label{usual T equation 3}
\end{align}

Let $\sigma: \mathbb{Z}_{\geq 0} \to \{0, 1\}$ be the function such that $\sigma(k)=0$ if $k$ is even and $\sigma(k)=1$ if $k$ is odd.

Our first main result is the following theorem.
\begin{theorem} \label{the system I}
For $k, \ell, m \in \mathbb{Z}_{\geq 1}$, $r \in \{1, 2\}$, we have the following relations in $\rep(U_q \hat{\mathfrak{g}})$.
\begin{align}
[ \mathcal{T}_{k, \ell-1, 0}^{(s)} ] [ \mathcal{T}_{k-1, \ell, 0}^{(s+4)} ] =[ \mathcal{T}_{k, \ell, 0}^{(s)} ] [ \mathcal{T}_{k-1, \ell-1, 0}^{(s+4)} ] + [ \mathcal{R}_{2k-1, \ell,  \lfloor \frac{\ell}{2} \rfloor}^{(s+1)} ] [ \mathcal{T}_{\lfloor \frac{\ell-1}{2} \rfloor, 0, 0}^{(s+4k+4)} ], \label{relation 1 in the system I}
\end{align}

\begin{align}
[ \mathcal{T}_{k, 0, m-1}^{(s)} ] [ \mathcal{T}_{k-1, 0, m}^{(s+4)} ] =[ \mathcal{T}_{k, 0, m}^{(s)} ] [ \mathcal{T}_{k-1, 0, m-1}^{(s+4)} ] + [ \mathcal{S}_{2k-1, m-1}^{(s+1)}], \label{relation 2 in the system I}
\end{align}

\begin{align}
[ \mathcal{S}_{k, \ell-1}^{(s)} ] [ \mathcal{S}_{k-1, \ell}^{(s+2)} ] =[ \mathcal{S}_{k, \ell}^{(s)}] [ \mathcal{S}_{k-1, \ell-1}^{(s+2)}] + [ \mathcal{\tilde{T}}_{k, 0, \lfloor \frac{\ell}{2} \rfloor}^{(s+1)}] [ \mathcal{T}_{\lfloor \frac{k}{2} \rfloor, 0, \ell}^{(s+2\sigma(k)+1)}] [ \mathcal{T}_{\lfloor \frac{\ell-1}{2} \rfloor, 0, 0}^{(s+2k+7)}] [ \mathcal{T}_{\lfloor \frac{k-1}{2} \rfloor, 0, 0}^{(s+2\sigma(k+1)+1)}], \label{relation 3 in the system I}
\end{align}

\begin{align}
[ \mathcal{\tilde{T}}_{k, 0, m-1}^{(s)} ] [ \mathcal{\tilde{T}}_{k-1, 0, m}^{(s+2)} ] =[ \mathcal{\tilde{T}}_{k, 0, m}^{(s)} ] [ \mathcal{\tilde{T}}_{k-1, 0, m-1}^{(s+2)} ] + [ \mathcal{S}_{k-1, 2m-1}^{(s+1)} ], \label{relation 4 in the system I}
\end{align}

\begin{align}
[ \mathcal{T}_{0, \ell, r-1}^{(s)} ] [ \mathcal{T}_{0, \ell-1, r}^{(s+2)} ] =[ \mathcal{T}_{0, \ell, r}^{(s)} ] [ \mathcal{T}_{0, \ell-1, r-1}^{(s+2)} ] + [ \mathcal{T}_{0, 0,  \ell-1}^{(s-1)}] [ \mathcal{T}_{\lfloor \frac{\ell}{2} \rfloor, r-1, 0}^{(s+2\sigma(\ell)+2)} ] [ \mathcal{T}_{\lfloor \frac{\ell+1}{2} \rfloor, 0, 0}^{(s+2\sigma(\ell+1)+2)} ], \label{relation 5 in the system I}
\end{align}

\begin{align}
[ \mathcal{R}_{k, 2\ell, \ell-1}^{(s)} ] [ \mathcal{R}_{k-1, 2\ell, \ell}^{(s+2)} ] =[ \mathcal{R}_{k, 2\ell, \ell}^{(s)} ] [ \mathcal{R}_{k-1, 2\ell, \ell-1}^{(s+2)} ] + [ \mathcal{T}_{0, 0,  k+2\ell}^{(s-1)} ] [ \mathcal{T}_{\lfloor \frac{k-1}{2} \rfloor, 0, 2\ell}^{(s+2\sigma(k+1)+1)} ] [ \mathcal{T}_{\lfloor \frac{k}{2} \rfloor, 2\ell-1, 0}^{(s+2\sigma(k)+1)} ], \label{relation 6 in the system I}
\end{align}

\begin{align}
[ \mathcal{R}_{k, 2\ell, \ell}^{(s)} ] [ \mathcal{R}_{k-1, 2\ell+1, \ell}^{(s+2)} ] =[ \mathcal{R}_{k, 2\ell+1, \ell}^{(s)} ] [ \mathcal{R}_{k-1, 2\ell, \ell}^{(s+2)} ] + [ \mathcal{\tilde{T}}_{k-1, 0,  \ell}^{(s+1)} ] [ \mathcal{T}_{\lfloor \frac{(k+1)}{2} \rfloor + \ell, 0, 0}^{(s+2\sigma(k+1)+1)} ] [ \mathcal{T}_{\lfloor \frac{k}{2} \rfloor, 2\ell, 0}^{(s+2\sigma(k)+1)} ], \label{relation 7 in the system I}
\end{align}

\begin{align}
[ \mathcal{R}_{k, 2\ell+1, \ell-1}^{(s)} ] [ \mathcal{R}_{k-1, 2\ell+2, \ell}^{(s+2)} ] =[ \mathcal{R}_{k, 2\ell+2, \ell}^{(s)} ] [ \mathcal{R}_{k-1, 2\ell+1, \ell}^{(s+2)} ] + [ \mathcal{\tilde{T}}_{k-1, 0,  \ell}^{(s+1)} ] [ \mathcal{T}_{\lfloor \frac{k+1}{2} \rfloor + \ell, 0, 0}^{(s+2\sigma(k+1)+1)} ] [ \mathcal{T}_{\lfloor \frac{k}{2} \rfloor, 2\ell+1, 0}^{(s+2\sigma(k)+1)} ], \label{relation 8 in the system I}
\end{align}

\begin{align}
[ \mathcal{R}_{0, 2\ell+i, \ell}^{(s)} ] = [ \mathcal{T}_{0, 0, 2\ell+i}^{(s-2)} ] [ \mathcal{T}_{\ell, 0, 0}^{(s+2)} ], \ i=0, 1, 2. \label{relation 9 in the system I}
\end{align}
\end{theorem}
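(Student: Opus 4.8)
The plan is to push every relation through the $q$-character homomorphism and thereby reduce it to a comparison of dominant monomials. Since $\chi_q$ is an injective ring homomorphism, a relation $[\mathcal{L}][\mathcal{R}]=[\mathcal{T}][\mathcal{B}]+[\mathcal{S}]$ holds in $\rep(U_q\hat{\mathfrak g})$ if and only if $\chi_q(\mathcal{L})\chi_q(\mathcal{R})=\chi_q(\mathcal{T})\chi_q(\mathcal{B})+\chi_q(\mathcal{S})$ in $\mathbb Z\mathcal P$. Writing any class as a $\mathbb Z_{\ge 0}$-combination $\sum_m c_m[L(m)]$ of simples, the passage between this basis and the list of dominant monomials occurring in the $q$-character is unitriangular for the order \eqref{partial order of monomials}: each $\chi_q(L(m))$ has $m$ as its unique maximal monomial with coefficient $1$, and a dominant monomial $M$ occurs in $\chi_q(L(m))$ only when $M\le m$, so the coefficients $c_m$ are recovered from the top monomial downward. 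Hence two elements of $\rep(U_q\hat{\mathfrak g})$ coincide as soon as their $q$-characters contain the same dominant monomials with the same multiplicities, and it suffices to match the dominant-monomial data of the two sides of each relation.

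For the inputs I would use that every module occurring in \eqref{relation 1 in the system I}--\eqref{relation 9 in the system I} is special by Theorem \ref{special}; thus each has a unique dominant monomial, namely its highest $l$-weight, and its $q$-character is produced by the FM algorithm, or on a suitable spectral window by the truncated-character criterion of Theorem \ref{truncated}. I would also invoke Theorem \ref{irreducible}: $\mathcal{T}\otimes\mathcal{B}$ and $\mathcal{S}$ are irreducible, so, denoting by $T,B,S$ the relevant highest $l$-weights, $[\mathcal{T}][\mathcal{B}]=[L(TB)]$ and $[\mathcal{S}]=[L(S)]$ are single simple classes. The right-hand side therefore carries exactly the two maximal dominant monomials $TB$ and $S$, together with the lower dominant monomials internal to $\chi_q(L(TB))$ and $\chi_q(L(S))$. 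A first, purely combinatorial step is to verify the monomial identity $LR=TB$ in each relation, which identifies the top factor on both sides; by the shift-covariance \eqref{shift} it is enough to treat one base value of $s$.

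The core of the argument is to enumerate the dominant monomials of the product $\chi_q(\mathcal{L})\chi_q(\mathcal{R})$, and here right-negativity does the work. Because $\mathcal{L}$ and $\mathcal{R}$ are special, every non-highest monomial of $\chi_q(\mathcal{L})$ and of $\chi_q(\mathcal{R})$ is right-negative; a product of two right-negative monomials is again right-negative, hence not dominant. Consequently every dominant monomial of the product lies in $\{L\}\,\mathscr M(\mathcal{R})\cup\mathscr M(\mathcal{L})\,\{R\}$, which cuts the search down to a finite, explicit list governed by the $\mathfrak{sl}_2$-strings of the special modules. Running this search I expect to find precisely the top monomial $LR=TB$ with multiplicity one and the monomial $S$ with multiplicity one, every other dominant monomial of the product occurring with the multiplicity dictated by $\chi_q(L(TB))$ or $\chi_q(L(S))$; matching this data against the right-hand side then yields the relation. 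The base case \eqref{relation 9 in the system I} is treated more directly: the nodes $1$ and $3$ are orthogonal in the Dynkin diagram of $C_3$, so $R_{0,2\ell+i,\ell}^{(s)}$ factors as a monomial in the variables $Y_{1,\ast}$ times a monomial in the variables $Y_{3,\ast}$, one checks this factorization equals $T_{0,0,2\ell+i}^{(s-2)}\,T_{\ell,0,0}^{(s+2)}$, and the corresponding simple module is the (irreducible) tensor product of the two commuting factors.

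The main obstacle is exactly the enumeration of the previous paragraph, namely proving the \emph{absence} of unexpected dominant monomials in $\chi_q(\mathcal{L})\chi_q(\mathcal{R})$ beyond those coming from $L(TB)$ and $L(S)$. Although right-negativity confines the candidates to the two families $\{L\}\mathscr M(\mathcal{R})$ and $\mathscr M(\mathcal{L})\{R\}$, extracting these families requires the explicit $q$-characters of the left and right modules in the relevant directions, which I would compute via the FM algorithm together with Theorem \ref{truncated}. The bookkeeping then splits according to the parities of $k,\ell,m$ --- this is precisely what the floor functions $\lfloor\,\cdot\,\rfloor$ and the parity function $\sigma$ in the statement record --- so each of \eqref{relation 1 in the system I}--\eqref{relation 8 in the system I} has to be checked in its even and odd sub-cases. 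Verifying that the source $S$ enters with multiplicity exactly one and that no further dominant monomial survives is the delicate point where the detailed string computations are concentrated.
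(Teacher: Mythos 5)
Your overall strategy is the paper's own: push the relation through the injective ring homomorphism $\chi_q$, use the unitriangularity of $\{\chi_q(L(m))\}_{m\in\mathcal{P}^+}$ with respect to the order \eqref{partial order of monomials} to reduce everything to matching dominant monomials with multiplicities, and then enumerate the dominant monomials of each product of $q$-characters (this is exactly Section \ref{prove system}, Lemmas \ref{dominant monomials} and \ref{dominant monomials in sources}). However, the step you rely on to make the enumeration tractable is false as stated: speciality does \emph{not} imply that every non-highest monomial is right-negative. Special only means there is a unique dominant monomial; a non-dominant monomial can still carry a positive exponent on its rightmost variable. The modules in Theorem \ref{the system I} themselves provide counterexamples: for $\ell\geq 1$ the special module $\mathcal{T}_{k,\ell,0}^{(s)}$ contains the non-highest monomial
\begin{align*}
T_{k,\ell,0}^{(s)}A_{3,s+4k-2}^{-1}=(3_{s}3_{s+4}\cdots 3_{s+4k-8})\,2_{s+4k-3}\,2_{s+4k-1}\,3_{s+4k}^{-1}\,(2_{s+4k+1}\cdots 2_{s+4k+2\ell-1}),
\end{align*}
which appears in the truncated character of Section \ref{The case of Tkl0} and ends in $2_{s+4k+2\ell-1}$ with exponent $+1$, hence is not right-negative. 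Consequently your claim that every dominant monomial of $\chi_q(\mathcal{L})\chi_q(\mathcal{R})$ lies in $\{L\}\mathscr{M}(\mathcal{R})\cup\mathscr{M}(\mathcal{L})\{R\}$ does not follow from the argument you give, and the "finite explicit list" your plan searches over is not justified.

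What the paper does instead (proof of Lemma \ref{dominant monomials}) is to embed each of $\mathcal{L}$ and $\mathcal{R}$ into a tensor product of Kirillov--Reshetikhin modules via Lemma \ref{contains in a larger set}; for KR modules the non-highest monomials genuinely are right-negative, and one then traces exactly which factors of a right-negative contribution can be cancelled by the other tensor factor so as to restore dominance. This structural cancellation analysis is what forces the right factor to contribute its highest monomial and produces the explicit chains $M_0,M_1=M_0A_{i,a}^{-1},\dots$; running it for $\chi_q(\mathcal{T})\chi_q(\mathcal{B})$ yields the same list minus its last element, and Lemma \ref{dominant monomials in sources} shows the source is special with precisely that missing monomial as highest weight, which closes the argument. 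Two smaller remarks: your appeal to Theorem \ref{irreducible} is not circular but is also not needed (in the paper that theorem is deduced \emph{afterwards} from the same monomial lists, in Section \ref{prove irreducible}); and the entire substance of the proof is the case-by-case enumeration you defer, so as it stands your proposal is a correct reduction plus an incorrectly justified finiteness claim rather than a proof.
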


We will prove Theorem \ref{the system I} in Section \ref{prove system}.

From the relations (\ref{relation 6 in the system I}), (\ref{relation 7 in the system I}), and (\ref{relation 8 in the system I}), we see that if $\mathcal{R}_{k, \ell, 0}^{(s)}$ is a module in the relations of Theorem \ref{the system I} for some $k, \ell, s$, then $\ell \in \{0, 1, 2\}$. Therefore we need the relations for the modules $\mathcal{R}_{k, \ell, 0}^{(s)}=\mathcal{T}_{ 0, k , \ell}^{(s-1)}$, $\ell \in \{0, 1, 2\}$, in order to obtain a closed system. These relations are (\ref{relation 5 in the system I}).

Let us use I to denote the system consisting of the relations in Theorem \ref{the system I} and the relations (\ref{trivial relations 1}), (\ref{usual T equation 1})-(\ref{usual T equation 3}). Then the system I is closed.

All relations except (\ref{relation 9 in the system I}) in Theorem \ref{the system I} are written in the form $[\mathcal{L}] [\mathcal{R}]=[\mathcal{T}] [\mathcal{B}] + [\mathcal{S}]$, where $\mathcal{L}, \mathcal{R}, \mathcal{T}, \mathcal{B}$ are irreducible modules which are called \textit{left, right, top} and \textit{bottom modules} and $\mathcal{S}$ is a tensor product of some irreducible modules. The factors of $\mathcal{S}$ are called \textit{sources}, see \cite{MY12b}, \cite{LM12}. Moreover, we have the following theorem.

\begin{theorem} \label{irreducible}
For each relation in Theorem \ref{the system I}, all summands on the right hand side, $\mathcal{T}\otimes \mathcal{B}$ and $\mathcal{S}$, are irreducible.
\end{theorem}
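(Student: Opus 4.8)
The plan is to prove irreducibility by showing each factor is special, hence irreducible, and then that the tensor products $\mathcal{T} \otimes \mathcal{B}$ and $\mathcal{S}$ contain a single dominant monomial. Since Theorem \ref{special} already establishes that the basic building blocks ($\mathcal{T}_{k,\ell,0}^{(s)}$, $\mathcal{T}_{k,0,m}^{(s)}$, $\mathcal{T}_{0,\ell,r}^{(s)}$, $\mathcal{\tilde{T}}_{k,0,m}^{(s)}$, $\mathcal{S}_{k,\ell}^{(s)}$, and the various $\mathcal{R}$-modules) are special, each individual summand appearing in the relations of Theorem \ref{the system I} is already irreducible. Thus the content of Theorem \ref{irreducible} reduces to showing that the \emph{tensor products} are irreducible: namely that $\mathcal{T}\otimes\mathcal{B}$ (the product of top and bottom modules) and each $\mathcal{S}$ (a product of two, three, or four sources) has a $q$-character with a unique dominant monomial.

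First I would recall the standard criterion (used in \cite{MY12b}, \cite{LM12}): if $L(m_1)$ and $L(m_2)$ are both special and the product $L(m_1)\otimes L(m_2)$ has $m_1 m_2$ as its unique dominant monomial, then $L(m_1)\otimes L(m_2)\cong L(m_1 m_2)$ is irreducible. By Lemma \ref{contains in a larger set}, every dominant monomial of $L(m_1)\otimes L(m_2)$ lies in $\mathscr{M}(L(m_1))\mathscr{M}(L(m_2))$, so it suffices to show that the only way to write a dominant monomial as a product $p_1 p_2$ with $p_j \in \mathscr{M}(L(m_j))$ is the trivial factorization $p_1 = m_1$, $p_2 = m_2$. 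For each relation I would write out the highest $l$-weights of $\mathcal{T}$, $\mathcal{B}$, and of the source factors explicitly using the monomial formulas for $T_{k,\ell,m}^{(s)}$, $\tilde T_{k,\ell,m}^{(s)}$, $S_{k,\ell}^{(s)}$, $R_{k,\ell,m}^{(s)}$, and then analyze the product via the right-negativity machinery: since every non-highest monomial of a Kirillov--Reshetikhin or minimal-affinization factor lies in $m_+ \mathcal{Q}^-$ and non-highest monomials are right-negative, any nontrivial lowering in one factor produces variables $Y_{i,a}^{-1}$ at large spectral parameters $a$ that cannot be canceled by the other factor, forcing the resulting product to be non-dominant.

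The key computational step is to verify, relation by relation, that the spectral parameters of the factors are arranged so that no cross-cancellation between the lowering operators $A_{i,a}^{-1}$ of one factor and the positive variables of another can create a spurious dominant monomial. Concretely, I would track the rightmost variable of each factor and use the observation (stated in the excerpt) that a right-negative monomial $m$ has every $m' \le m$ also right-negative; combined with condition (ii) of Theorem \ref{truncated}, this pins down the dominant monomials. The most delicate cases are the relations with three or four source factors, equations \eqref{relation 3 in the system I}, \eqref{relation 5 in the system I}, \eqref{relation 6 in the system I}, \eqref{relation 7 in the system I}, and \eqref{relation 8 in the system I}, where $\mathcal{S}$ is a product of several modules and one must rule out dominant monomials arising from partial lowerings in two or more factors simultaneously.

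The main obstacle I expect is precisely this multi-factor analysis: with three or four tensor factors, the combinatorics of which products of lowered monomials could recombine into a dominant monomial becomes intricate, especially because some source factors (for instance the $\mathcal{R}$- and $\mathcal{\tilde T}$-type modules) are not Kirillov--Reshetikhin modules and their full $q$-characters are only controlled through the truncated $q$-character computations of Theorem \ref{truncated} and the specialness results of Theorem \ref{special}. To manage this I would exploit the shift-of-spectral-parameter relation \eqref{shift} to normalize each relation to a convenient value of $s$, and I would argue that the spectral-parameter supports of the distinct factors are sufficiently separated (or overlap in a controlled way) that a global dominance constraint forces each factor back to its own highest weight. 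Establishing this separation carefully for every one of the eight relations is the crux of the proof.
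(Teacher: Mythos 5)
Your treatment of $\mathcal{S}$ matches the paper: Lemma \ref{dominant monomials in sources} shows that the product of the source factors has a unique dominant monomial, hence is special and irreducible, and the delicate multi-factor analysis you anticipate is exactly what that lemma carries out (reducing to the case where all but one factor sits at its highest weight and then running the FM/right-negativity argument on the remaining factor).

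However, your reduction for $\mathcal{T}\otimes\mathcal{B}$ contains a genuine gap: you propose to show that $\chi_q(\mathcal{T})\chi_q(\mathcal{B})$ has a unique dominant monomial, but this is false. By the paper's Lemma \ref{dominant monomials}, the product $\chi_q(T_{k,\ell,0}^{(s)})\,\chi_q(T_{k-1,\ell-1,0}^{(s+4)})$ (and likewise in every other case) contains the full list of dominant monomials $M_0,\dots,M_{k+\ell-2}$ — typically many more than one — so $\mathcal{T}\otimes\mathcal{B}$ is irreducible without being special, and the "unique dominant monomial" criterion cannot apply. The argument the paper actually uses is different: for each non-highest dominant monomial $M_i$ of the product one exhibits an explicit monomial $n_i\in\mathscr{M}(L(M_i))$ (produced by a further lowering $A_{j,b}^{-1}$, verified via an $\widehat{\mathfrak{sl}}_2$-in-direction-$j$ argument or Theorem \ref{truncated}) which does \emph{not} occur in $\mathscr{M}(\chi_q(\mathcal{T})\chi_q(\mathcal{B}))$; since the $q$-character of $\mathcal{T}\otimes\mathcal{B}$ is the sum of the $q$-characters of its composition factors, $L(M_i)$ cannot be a composition factor, and the only remaining possibility is that $\mathcal{T}\otimes\mathcal{B}\cong L(M_0)$. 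This is Lemma \ref{monomials that are not in tensor product}, and without some substitute for it your strategy stalls at the (false) claim that the product of the top and bottom highest weights is the only dominant monomial.
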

We will prove Theorem \ref{irreducible} in Section \ref{prove irreducible}.

The system I can be used to compute the $q$-characters of the modules in the system. We have the following proposition.
\begin{proposition} \label{compute 1}
One can compute the $q$-characters of $\mathcal{T}_{k, \ell, 0}^{(s)}$, $\mathcal{T}_{k, 0, m}^{(s)}$, $\mathcal{T}_{0, \ell, r}^{(s)}$, $\mathcal{\tilde{T}}_{k, 0, m}^{(s)}$, $\mathcal{S}_{k, \ell}^{(s)}$, $\mathcal{R}_{k, 2\ell+j, \ell}^{(s)}$, $j=0, 1, 2$, $s\in\mathbb{Z}$, $k, \ell, m \in  \mathbb{Z}_{\geq 0}$, recursively, from $\chi_q(1_0), \chi_q(2_0)$, and $\chi_q(3_0)$, by using the relations in the system I.
\end{proposition}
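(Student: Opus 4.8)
The plan is to transport every relation of the system into the Laurent polynomial ring $\mathbb{Z}\mathcal{P}$ via the injective ring homomorphism $\chi_q$ and, in each relation, to solve for a single distinguished $q$-character by exact division. Applying $\chi_q$ to a relation $[\mathcal{L}][\mathcal{R}] = [\mathcal{T}][\mathcal{B}] + [\mathcal{S}]$ of Theorem \ref{the system I} yields the identity $\chi_q(\mathcal{L})\chi_q(\mathcal{R}) = \chi_q(\mathcal{T})\chi_q(\mathcal{B}) + \chi_q(\mathcal{S})$ in $\mathbb{Z}\mathcal{P}$, where $\chi_q(\mathcal{S})$ is the product of the $q$-characters of the irreducible tensor factors of $\mathcal{S}$ (these factors being exactly the modules listed in the theorem, by Theorem \ref{irreducible}). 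Since $\mathbb{Z}\mathcal{P}$ is an integral domain and the displayed equality is an honest identity, once $\chi_q(\mathcal{L})$, $\chi_q(\mathcal{R})$, $\chi_q(\mathcal{B})$ and $\chi_q(\mathcal{S})$ are known the quotient $\chi_q(\mathcal{T}) = (\chi_q(\mathcal{L})\chi_q(\mathcal{R}) - \chi_q(\mathcal{S}))/\chi_q(\mathcal{B})$ is an exact division, computed algorithmically by Laurent polynomial division. I orient each of (\ref{relation 1 in the system I})--(\ref{relation 8 in the system I}) so that its top module is the output (namely $\mathcal{T}_{k,\ell,0}^{(s)}$, $\mathcal{T}_{k,0,m}^{(s)}$, $\mathcal{S}_{k,\ell}^{(s)}$, $\mathcal{\tilde{T}}_{k,0,m}^{(s)}$, $\mathcal{T}_{0,\ell,r}^{(s)}$, $\mathcal{R}_{k,2\ell,\ell}^{(s)}$, $\mathcal{R}_{k,2\ell+1,\ell}^{(s)}$, $\mathcal{R}_{k,2\ell+2,\ell}^{(s)}$ respectively), so that every module in the statement is the output of exactly one relation.

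The base of the recursion consists of the three fundamental $q$-characters of Lemma \ref{fundamental q-characters} together with all their spectral shifts from (\ref{shift}); the Kirillov--Reshetikhin modules $\mathcal{T}_{k,0,0}^{(s)}$, $\mathcal{T}_{0,k,0}^{(s)}$, $\mathcal{T}_{0,0,k}^{(s)}$, which are special (Theorem \ref{special}) and are computed from the fundamental characters by the usual T-systems (\ref{usual T equation 1})--(\ref{usual T equation 3}) by induction on $k$; and the modules $\mathcal{R}_{0,2\ell+i,\ell}^{(s)}$, $i=0,1,2$, given outright as products by (\ref{relation 9 in the system I}). Since every module appearing in the statement is special (Theorem \ref{special}), its $q$-character is well defined and equals the honest $\chi_q$ of the corresponding irreducible module, so the divisions above indeed produce the desired $q$-characters. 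It remains to verify that the chosen orientation turns the whole procedure into a well-founded recursion terminating at the base cases.

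The hard part is exactly this termination, because the sources in (\ref{relation 1 in the system I})--(\ref{relation 8 in the system I}) live in several different families and are genuinely ``larger'' than their outputs in naive measures: the source $\mathcal{R}_{2k-1,\ell,\lfloor\ell/2\rfloor}^{(s+1)}$ of (\ref{relation 1 in the system I}), for example, has more tensor factors and a larger highest weight than the output $\mathcal{T}_{k,\ell,0}^{(s)}$, so no positive linear functional of the highest weight can decrease along every edge of the dependency graph, which moreover contains inter-family cycles such as $\mathcal{T}_{\bullet,\bullet,0}\to\mathcal{R}\to\mathcal{T}_{\bullet,\bullet,0}$ and $\mathcal{T}_{\bullet,0,\bullet}\to\mathcal{S}\to\mathcal{\tilde{T}}\to\mathcal{S}$. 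I would resolve this with a lexicographic statistic whose primary component is the family-dependent weighted factor count
\[
P(\mathcal{T}_{k,\ell,m}) = 2k+\ell+m,\quad P(\mathcal{R}_{k,\ell,m}) = k+\ell,\quad P(\mathcal{S}_{k,\ell}) = k+\ell,\quad P(\mathcal{\tilde{T}}_{k,0,m}) = k+2m,
\]
in which the long-root node is weighted twice. The key point is that the halving of indices in the sources (the terms $\lfloor k/2\rfloor$, $\lfloor\ell/2\rfloor$) played against the doubling ($2k-1$, $2\ell$) makes $P$ strictly drop along the cross-family edges that bind the cycles, while never increasing along any edge. A secondary component then handles the relations that keep $P$ fixed, namely the third index $m$ of $\mathcal{R}_{k,\ell,m}$ for (\ref{relation 6 in the system I}), together with a fixed rank on the families (Kirillov--Reshetikhin modules lowest, then $\mathcal{\tilde{T}}_{\bullet,0,\bullet}$ and $\mathcal{T}_{\bullet,0,\bullet}$, then the remaining families) to break the finitely many ties at equal $P$ that occur in (\ref{relation 3 in the system I}), (\ref{relation 5 in the system I}), (\ref{relation 6 in the system I}), (\ref{relation 7 in the system I}). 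The core of the argument is then the routine but lengthy case check that, in this order, the output of each of the nine relations strictly dominates every module on the other side; granting this, well-founded induction produces the $q$-character of every module in the statement.
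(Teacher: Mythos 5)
Your proposal is correct and shares the paper's basic strategy: apply the injective ring homomorphism $\chi_q$ to each relation of Theorem \ref{the system I}, solve for the top module by exact division in $\mathbb{Z}\mathcal{P}$, and ground the recursion in the fundamental characters, the Kirillov--Reshetikhin modules computed via \eqref{usual T equation 1}--\eqref{usual T equation 3}, and the product formula \eqref{relation 9 in the system I}. Where you genuinely diverge is in the termination argument. The paper proceeds in three ordered stages --- first the $\mathcal{T}_{0,\ell,r}^{(s)}$ via \eqref{relation 5 in the system I}, then the $\mathcal{T}_{k,0,m}^{(s)}$, $\mathcal{\tilde{T}}_{k,0,m}^{(s)}$, $\mathcal{S}_{k,\ell}^{(s)}$ via \eqref{relation 2 in the system I}--\eqref{relation 4 in the system I}, then the $\mathcal{R}$-family and $\mathcal{T}_{k,\ell,0}^{(s)}$ via \eqref{relation 1 in the system I} and \eqref{relation 6 in the system I}--\eqref{relation 9 in the system I} --- and inside the last two stages it runs nested double inductions with strengthened hypotheses that track explicit index ranges (e.g.\ $\mathcal{S}_{k,m}$ for $k\le 2k_1$, $m\le m_1-1$) and prescribe the exact order in which the characters are produced. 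Your single lexicographic statistic (the family-weighted degree $P$, refined by family rank and, inside the $\mathcal{R}$-family, by the third index) replaces that bookkeeping with one global well-founded order; I checked it against the relations and it does strictly decrease from the output to every module on the other side, the equal-$P$ edges occurring exactly where you locate them and being resolved by your tie-breaks. The paper's staging yields sharper information about precisely which modules, in which index ranges, are needed to compute a given one, while your order gives a more transparent, one-shot certificate of termination. Two small points to fold in: the degenerate cases $k=0$ or $\ell=0$ must be routed through the trivial relations \eqref{trivial relations 1} rather than being ``the output of exactly one relation'', and your claim that every input is itself computable requires reading the left factor of \eqref{relation 8 in the system I} as $\mathcal{R}_{k,2\ell+1,\ell}^{(s)}$, consistently with Lemma \ref{dominant monomials}(8), since the third index $\ell-1$ as printed would produce an input outside the families covered by the recursion.
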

We will prove Proposition \ref{compute 1} in Section \ref{prove compute}.

\subsection{The system II}
We will describe the system II which is dual to the system I. Let $\bar{T}_{k, \ell, m}^{(s)}, \bar{\tilde{T}}_{k, \ell, m}^{(s)}, \bar{S}_{k, \ell}^{(s)}, \bar{R}_{k, \ell, m}^{(s)}$ be the monomials obtained from $T_{k, \ell, m}^{(s)}$, $\tilde{T}_{k, \ell, m}^{(s)}$, $S_{k, \ell}^{(s)}$, $R_{k, \ell, m}^{(s)}$ by replacing $i_a$ with $i_{-a}$, $i=1, 2, 3$. Namely,
\begin{align*}
\bar{T}_{k, \ell, m}^{(s)} = \left( \prod_{i=0}^{m-1} 1_{-s-4k-2\ell-2i-2} \right) \left( \prod_{i=0}^{\ell-1} 2_{-s-4k-2i-1} \right) \left( \prod_{i=0}^{k-1} 3_{-s-4i} \right),
\end{align*}

\begin{align*}
\bar{\tilde{T}}_{k, \ell, m}^{(s)} = \left( \prod_{i=0}^{m-1} 3_{-s-2k-2\ell-4i-4} \right) \left( \prod_{i=0}^{\ell-1} 2_{-s-2k-2i-1} \right) \left( \prod_{i=0}^{k-1} 1_{-s-2i} \right),
\end{align*}

\begin{align*}
\bar{S}_{k, \ell}^{(s)} = \left( \prod_{i=0}^{\ell-1} 2_{-s-2k-2i-4} \right) \left( \prod_{i=0}^{k-1} 2_{-s-2i} \right),
\end{align*}

\begin{align*}
\bar{R}_{k, \ell, m}^{(s)} =   \left( \prod_{i=0}^{m-1} 3_{-s-2k-4i-3} \right) \left( \prod_{i=0}^{\ell-1} 1_{-s-2k-2i-1} \right) \left( \prod_{i=0}^{k-1} 2_{-s-2i} \right).
\end{align*}

For $k, \ell \in \mathbb{Z}_{\geq 0}, s \in \mathbb{Z}$, we have the following trivial relations
\begin{equation}
\begin{aligned}
& \mathcal{\bar{\tilde{T}}}_{k, 0, 0}^{(s)} = \mathcal{T}_{0, 0, k}^{(-s-2k)}, \ \mathcal{\bar{\tilde{T}}}_{0, k, 0}^{(s)} = \mathcal{T}_{0, k, 0}^{(-s-2k)}, \ \mathcal{\bar{\tilde{T}}}_{0, 0, k}^{(s)} = \mathcal{T}_{k, 0, 0}^{(-s-4k)}, \\
& \mathcal{\bar{S}}_{0, k}^{(s)}=\mathcal{\bar{S}}_{k, 0}^{(s+4)}=\mathcal{T}_{0, k, 0}^{(-s-2k-3)}, \\
& \mathcal{\bar{R}}_{ k , \ell, 0}^{(s)} = \mathcal{\bar{T}}_{0,  k , \ell}^{(s-1)}, \ \mathcal{\bar{R}}_{k, 0, 0}^{(s)}=\mathcal{T}_{0, k, 0}^{(-s-2k+1)}, \ \mathcal{\bar{R}}_{0, k, 0}^{(s)}=\mathcal{T}_{0, 0, k}^{(-s-2k-1)}, \ \mathcal{\bar{R}}_{0, 0, k}^{(s)}=\mathcal{T}_{k, 0, 0}^{(-s-4k+1)}. \label{trivial relations 2}
\end{aligned}
\end{equation}

\begin{theorem} \label{anti-special II}
The modules $\mathcal{\bar{T}}_{k, \ell, 0}^{(s)}, \mathcal{\bar{T}}_{k, 0, m}^{(s)}, \mathcal{\bar{T}}_{0, \ell, r}^{(s)}, \mathcal{\bar{\tilde{T}}}_{k, 0, m}^{(s)}, \mathcal{\bar{S}}_{k, \ell}^{(s)}, \mathcal{\bar{R}}_{k, 2\ell+j, \ell}^{(s)}$, $k, l, m\in\mathbb{Z}_{\geq 0}, r \in \{1, 2\}, j \in \{0, 1, 2\}$, are anti-special.
\end{theorem}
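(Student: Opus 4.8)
The plan is to deduce the theorem from Theorem~\ref{special} by means of the contragredient duality on $\rep(U_q\hat{\mathfrak{g}})$, exploiting the fact that the barred modules are exactly the spectral reflections of the unbarred ones. Since $\mathfrak{g}$ is of type $C_3$ we have $-w_0=\mathrm{id}$, so the Dynkin involution $i\mapsto i^{*}$ attached to the dual is trivial; this is what makes the reflection $i_t\mapsto i_{-t}$ match the duality on the nose, up to a spectral shift.

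First I would record the formal properties of the duality that are actually needed. For a finite-dimensional module $V$, its (left) contragredient dual $V^{*}$ is again finite-dimensional, is simple whenever $V$ is simple, and its $l$-weights are obtained from those of $V$ by inverting the monomials (together with the relabelling $i\mapsto i^{*}=i$ and a fixed spectral twist), with equal multiplicities. In particular the induced map on monomials negates every exponent, hence it carries $\mathcal{P}^{+}$ bijectively onto the set of anti-dominant monomials and interchanges $\mathcal{Q}^{+}$ with $\mathcal{Q}^{-}$. Consequently the number of dominant monomials of $\chi_q(V)$ equals the number of anti-dominant monomials of $\chi_q(V^{*})$, so that $V$ is special if and only if $V^{*}$ is anti-special. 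This reduces Theorem~\ref{anti-special II} to identifying each barred module, up to a spectral shift, with the dual of the corresponding special module from Theorem~\ref{special}.

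To carry out the identification I would model the action of the duality on $q$-characters by the $\mathbb{Z}$-algebra involution $\nu$ of $\mathbb{Z}\mathcal{P}$ determined by $\nu(i_t)=i_{-t}^{-1}$, that is $\nu(Y_{i,aq^{t}})=Y_{i,aq^{-t}}^{-1}$. A direct computation gives $\nu(A_{i,aq^{t}})=A_{i,aq^{-t}}^{-1}$, so $\nu$ swaps $\mathcal{Q}^{\pm}$ and sends dominant monomials to anti-dominant ones. Applying $\nu$ to the highest $l$-weights one finds $\nu(T_{k,\ell,m}^{(s)})=(\bar{T}_{k,\ell,m}^{(s)})^{-1}$, and similarly for the $\tilde T$-, $S$- and $R$-families; equivalently the bar substitution $i_t\mapsto i_{-t}$ reverses the order of the factors in the explicit products defining $T,\tilde T,S,R$ and reflects their spectral parameters. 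Comparing this with the duality recipe, which simplifies because $i^{*}=i$, shows that $\bar{\mathcal{T}}_{k,\ell,m}^{(s)}$ is, up to a spectral-parameter shift $\tau_b$, the contragredient dual of $\mathcal{T}_{k,\ell,m}^{(s')}$, and likewise for the tilde-, $S$- and $R$-families. Since $\tau_b$ preserves anti-speciality by \eqref{shift}, the conclusion follows. For the degenerate ranges of the indices I would invoke the trivial relations \eqref{trivial relations 2}, which already express the relevant barred modules as genuine $\mathcal{T}$-modules, reducing those cases to Theorem~\ref{special} directly.

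The hard part will be the identification $\bar{\mathcal{T}}\cong\tau_b(\mathcal{T})^{*}$ together with the determination of the shift $b$. This is where the work lies: one must pin down the precise form of the contragredient dual on highest $l$-weights in type $C_3$ and check that reflecting-and-inverting the lowest $l$-weight of $\mathcal{T}$ reproduces $\bar{T}$ up to $\tau_b$. Because the modules in Theorem~\ref{special} are special, their $q$-characters are computable by the FM algorithm, so in principle this is a finite and explicit verification for each of the families $T_{k,\ell,0}$, $T_{k,0,m}$, $T_{0,\ell,r}$, $\tilde T_{k,0,m}$, $S_{k,\ell}$ and $R_{k,2\ell+j,\ell}$; the only subtlety is to carry the spectral shift consistently across the $\tau_b$ bookkeeping and across the cases collapsed by \eqref{trivial relations 2}.
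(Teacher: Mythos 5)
Your overall strategy is genuinely different from the paper's. The paper disposes of this theorem in one line: it reruns the arguments of Section \ref{prove special} ``dually'', i.e.\ it repeats the right-negative/truncation/FM analysis in mirror image (left-positive monomials, truncation to $\mathcal{Q}_U^{+}$, the algorithm started from the lowest $\ell$-weight) to show directly that each $\chi_q(\bar{m}_+)$ has a unique anti-dominant monomial. You instead want to deduce anti-speciality formally from Theorem \ref{special} via contragredient duality, by identifying each barred module with a spectral shift of the dual of an unbarred one. That route is attractive --- it is essentially the content of the paper's Lemma \ref{involution II} (note that the twist $q^{8}$ there is exactly $r^{\vee}h^{\vee}=2\cdot 4$ for $C_3$, consistent with your duality bookkeeping), and it would replace a long case-by-case verification by a two-line formal deduction. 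One genuine advantage of your route, if completed, is that it would prove Lemma \ref{involution II} independently rather than the other way around; the paper's own reference for that lemma ([LM12, Lemma 7.3]) typically uses \emph{both} speciality of $L(m_+)$ and anti-speciality of $L(\bar{m}_+)$ to run the FM algorithm from both ends, so deducing Theorem \ref{anti-special II} from Lemma \ref{involution II} as stated would risk circularity --- your duality argument is precisely what breaks that circle.

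However, there is a real gap, and you have located it yourself without closing it. First, a precision issue: the contragredient dual acts on $q$-characters by $Y_{i,a}\mapsto Y_{i^{*},aq^{\mp r^{\vee}h^{\vee}}}^{-1}$ with \emph{no} reflection of spectral parameters, whereas your $\nu(i_t)=i_{-t}^{-1}$ builds in a reflection; the two differ by the spectral-reflection symmetry, which does not by itself send $L(m)$ to $L(\bar m)$ (if it did and preserved dominance, the barred modules would be special rather than anti-special, contradicting the fact that e.g.\ $\bar{\mathcal{T}}_{k,\ell,0}^{(s)}=\tilde{\mathcal{T}}_{0,\ell,k}^{(-s-4k-2\ell)}$ lies outside the special families of Theorem \ref{special}). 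So the identification you need is $\bar{\mathcal{T}}\cong\tau_b(\mathcal{T}^{*})$ where the highest $\ell$-weight of $\mathcal{T}^{*}$ is the twisted inverse of the \emph{lowest} $\ell$-weight of $\mathcal{T}$. Determining the lowest $\ell$-weight of each of the families $T_{k,\ell,0}$, $T_{k,0,m}$, $T_{0,\ell,r}$, $\tilde T_{k,0,m}$, $S_{k,\ell}$, $R_{k,2\ell+j,\ell}$ is not automatic for non-Kirillov--Reshetikhin simple modules; it requires either running the FM algorithm to its terminal monomial in each case or a separate structural argument. Since this identification is the entire mathematical content of the reduction and your proposal explicitly defers it (``the hard part will be\dots''), the proof is not complete as written. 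To finish along your lines you would need to carry out that computation family by family; alternatively, the paper's mirror-image rerun of Section \ref{prove special} reaches the conclusion without ever needing the lowest $\ell$-weights.
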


\begin{proof}
This theorem can be proved using the dual arguments of the proof of Theorem \ref{special}.
\end{proof}

\begin{lemma} \label{involution II}
Let $\iota: \mathbb{Z}\mathcal{P} \to \mathbb{Z}\mathcal{P}$ be a ring homomorphism such that $Y_{i, aq^{s}} \mapsto Y_{i, aq^{8-s}}^{-1}$, $i=1, 2, 3$, for all $a \in \mathbb{C}^{\times}, s\in \mathbb{Z}$. Let $m_+$ be one of the monomials
\begin{align*}
T_{k, \ell, 0}^{(s)}, \ T_{k, 0, m}^{(s)}, \ T_{0, \ell, r}^{(s)}, \ \tilde{T}_{k, 0, m}^{(s)}, \ S_{k, \ell}^{(s)}, \ R_{k, 2\ell+j, \ell}^{(s)}, \ j=0, 1, 2.
\end{align*}
Then $ \chi_q(\bar{m_+}) = \iota(\chi_q(m_+)) $.
\end{lemma}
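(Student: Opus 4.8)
The plan is to show that $\iota$ carries the Frenkel--Mukhin data of the special module $L(m_+)$ to the dual Frenkel--Mukhin data of the anti-special module $L(\bar m_+)$. First I would record the key computation: a direct check from the definitions of the $A_{i,a}$ gives
\begin{align*}
\iota(A_{i, aq^{t}}) = A_{i, aq^{8-t}}^{-1}, \qquad i \in I,\ t \in \mathbb{Z}.
\end{align*}
Since $\iota$ is a ring involution, this shows that $\iota(\mathcal{Q}^{+}) = \mathcal{Q}^{-}$, that $\iota$ reverses the partial order (\ref{partial order of monomials}), and that $\iota$ interchanges $j$-dominant and $j$-anti-dominant monomials for every $j \in I$. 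In particular $\iota$ maps $m_+\mathcal{Q}^{-}$ onto $\iota(m_+)\mathcal{Q}^{+}$ and sends the dominant monomials of a $q$-character to the anti-dominant monomials of its image.

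Next I would establish compatibility with the $\widehat{\mathfrak{sl}}_2$-reductions. Writing $\iota^{(i)} \colon Y_{aq^{t}} \mapsto Y_{aq^{8-t}}^{-1}$ for the reflect-and-invert map on $U_{q_i}(\widehat{\mathfrak{sl}}_2)$-characters, one checks directly that $\beta_i \circ \iota = \iota^{(i)} \circ \beta_i$ for all $i \in I$. The point is that $\iota^{(i)}$ sends the $q$-character of each irreducible $U_{q_i}(\widehat{\mathfrak{sl}}_2)$-module $W_k^{(b)}$ to the $q$-character of another such module: the character of $W_k^{(b)}$ is invariant under reflection-and-inversion about its own centre, so reflection-and-inversion about $q^{4}$ acts on it as a spectral shift, and products of characters of strings in general position go to products of characters of strings in general position. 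Hence $\iota^{(i)}$ preserves the set of $U_{q_i}(\widehat{\mathfrak{sl}}_2)$-$q$-characters, and consequently $\iota$ preserves the property that $\beta_i(\,\cdot\,)$ be a sum of such $q$-characters for every $i$.

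Finally I would run the uniqueness argument. By Theorem \ref{special}, $L(m_+)$ is special, so $\chi_q(m_+)$ has $m_+$ as its unique dominant monomial, is supported in $m_+\mathcal{Q}^{-}$, and is $\beta_i$-consistent for all $i$. Applying $\iota$ together with the two steps above, $\iota(\chi_q(m_+))$ is supported in $\iota(m_+)\mathcal{Q}^{+}$, is $\beta_i$-consistent for all $i$, and has the single anti-dominant monomial $\iota(m_+)$. Writing $\nu \colon Y_{i,aq^{t}} \mapsto Y_{i,aq^{t+8}}^{-1}$ for the map sending a dominant monomial to the lowest $l$-weight of the corresponding simple module, one has $\iota = \nu \circ \rho$, where $\rho \colon Y_{i,aq^{t}}\mapsto Y_{i,aq^{-t}}$ is the spectral reflection defining the bar operation; since $\bar m_+ = \rho(m_+)$ this gives $\iota(m_+) = \nu(\bar m_+)$, the lowest $l$-weight of $L(\bar m_+)$. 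By Theorem \ref{anti-special II}, $L(\bar m_+)$ is anti-special, so $\chi_q(\bar m_+)$ is the unique $\beta_i$-consistent element of $\mathbb{Z}\mathcal{P}$ supported in $\bar m_+\mathcal{Q}^{-}$ with the single anti-dominant monomial $\nu(\bar m_+)$. As $\iota(\chi_q(m_+))$ meets exactly this description, I would conclude $\iota(\chi_q(m_+)) = \chi_q(\bar m_+)$.

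The hard part will be the last step: one must make precise, and justify, the dual form of the Frenkel--Mukhin uniqueness, namely that an anti-special $q$-character is determined by its unique anti-dominant monomial together with $\beta_i$-consistency. The clean way to secure this is to apply Theorem \ref{truncated} to the special module $L(\bar m_+)^{*} = L(\nu^{2}(\bar m_+))$ and transport the result back by $\nu$; equivalently, one takes $\mathcal{M} = \iota(\mathscr{M}(L(m_+)))$ and verifies hypotheses (i)--(iv) of Theorem \ref{truncated} for $L(\bar m_+)^{*}$ directly from those already known for $L(m_+)$. The base of all these computations is the observation, immediate from Lemma \ref{fundamental q-characters}, that $\iota(\chi_q(i_0)) = \chi_q(i_0) = \chi_q(\bar{i}_0)$ for $i = 1, 2, 3$.
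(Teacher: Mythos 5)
Your proposal is correct, and it is essentially the argument the paper has in mind: the paper's own proof is just a citation to the proof of Lemma 7.3 of \cite{LM12}, which likewise rests on the identity $\iota(A_{i,aq^{t}})=A_{i,aq^{8-t}}^{-1}$, the compatibility of $\iota$ with the $\beta_i$-reductions, and the special/anti-special properties of $L(m_+)$ and $L(\bar m_+)$ (Theorems \ref{special} and \ref{anti-special II}) to pin down both sides by Frenkel--Mukhin uniqueness. You have correctly isolated the one delicate point --- that ``$\beta_i$-consistent with a unique anti-dominant monomial'' determines the character only via the dual form of the FM theorem --- and your proposed fix (pass to the dual, which is special with highest $l$-weight $\nu^{2}(\bar m_+)$, apply Theorem \ref{truncated} with $U=I\times\mathbb{C}^{\times}$, and transport back) is the standard and correct way to close it.
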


\begin{proof}
The lemma is proved by using similar arguments of the proof of Lemma 7.3 of \cite{LM12}.
\end{proof}

The modules $\mathcal{\bar{T}}_{k, \ell, 0}^{(s)}, \mathcal{\bar{T}}_{k, 0, m}^{(s)}, \mathcal{\bar{T}}_{0, \ell, r}^{(s)}, \mathcal{\bar{\tilde{T}}}_{k, 0, m}^{(s)}, \mathcal{\bar{S}}_{k, \ell}^{(s)}, \mathcal{\bar{R}}_{k, 2\ell+j, \ell}^{(s)}$ satisfy the same relations as in Theorem \ref{the system I} but the roles of left and right modules are exchanged.
\begin{theorem} \label{the system II}
For $k, \ell, m \in \mathbb{Z}_{\geq 1}$, $r \in \{1, 2\}$, we have the following relations in $\rep(U_q \hat{\mathfrak{g}})$.
\begin{align}
 [ \mathcal{\bar{T}}_{k-1, \ell, 0}^{(s+4)} ] [ \mathcal{\bar{T}}_{k, \ell-1, 0}^{(s)} ] =[ \mathcal{\bar{T}}_{k, \ell, 0}^{(s)} ] [ \mathcal{\bar{T}}_{k-1, \ell-1, 0}^{(s+4)} ] + [ \mathcal{\bar{R}}_{2k-1, \ell,  \lfloor \frac{\ell}{2} \rfloor}^{(s+1)} ] [ \mathcal{\bar{T}}_{\lfloor \frac{\ell-1}{2} \rfloor, 0, 0}^{(s+4k+4)} ], \label{relation 1 in the system II}
\end{align}

\begin{align}
[ \mathcal{\bar{T}}_{k-1, 0, m}^{(s+4)} ] [ \mathcal{\bar{T}}_{k, 0, m-1}^{(s)} ] =[ \mathcal{\bar{T}}_{k, 0, m}^{(s)} ] [ \mathcal{\bar{T}}_{k-1, 0, m-1}^{(s+4)} ] + [ \mathcal{\bar{S}}_{2k-1, m-1}^{(s+1)}], \label{relation 2 in the system II}
\end{align}

\begin{align}
 [ \mathcal{\bar{S}}_{k-1, \ell}^{(s+2)} ] [ \mathcal{\bar{S}}_{k, \ell-1}^{(s)} ] =[ \mathcal{\bar{S}}_{k, \ell}^{(s)}] [ \mathcal{\bar{S}}_{k-1, \ell-1}^{(s+2)}] + [ \mathcal{\bar{\tilde{T}}}_{k, 0, \lfloor \frac{\ell}{2} \rfloor}^{(s+1)}] [ \mathcal{\bar{T}}_{\lfloor \frac{k}{2} \rfloor, 0, \ell}^{(s+2\sigma(k)+1)}] [ \mathcal{\bar{T}}_{\lfloor \frac{\ell-1}{2} \rfloor, 0, 0}^{(s+2k+7)}] [ \mathcal{\bar{T}}_{\lfloor \frac{k-1}{2} \rfloor, 0, 0}^{(s+2\sigma(k+1)+1)}],  \label{relation 3 in the system II}
\end{align}

\begin{align}
 [ \mathcal{\bar{\tilde{T}}}_{k-1, 0, m}^{(s+2)} ] [ \mathcal{\bar{\tilde{T}}}_{k, 0, m-1}^{(s)} ] =[ \mathcal{\bar{\tilde{T}}}_{k, 0, m}^{(s)} ] [ \mathcal{\bar{\tilde{T}}}_{k-1, 0, m-1}^{(s+2)} ] + [ \mathcal{\bar{S}}_{k-1, 2m-1}^{(s+1)} ],                    \label{relation 4 in the system II}
\end{align}

\begin{align}
 [ \mathcal{\bar{T}}_{0, \ell-1, r}^{(s+2)} ] [ \mathcal{\bar{T}}_{0, \ell, r-1}^{(s)} ] =[ \mathcal{\bar{T}}_{0, \ell, r}^{(s)} ] [ \mathcal{\bar{T}}_{0, \ell-1, r-1}^{(s+2)} ] + [ \mathcal{\bar{T}}_{0, 0,  \ell-1}^{(s-1)}] [ \mathcal{\bar{T}}_{\lfloor \frac{\ell}{2} \rfloor, r-1, 0}^{(s+2\sigma(\ell)+2)} ] [ \mathcal{\bar{T}}_{\lfloor \frac{\ell+1}{2} \rfloor, 0, 0}^{(s+2\sigma(\ell+1)+2)} ],      \label{relation 5 in the system II}
\end{align}

\begin{align}
[ \mathcal{\bar{R}}_{k-1, 2\ell, \ell}^{(s+2)} ] [ \mathcal{\bar{R}}_{k, 2\ell, \ell-1}^{(s)} ] =[ \mathcal{\bar{R}}_{k, 2\ell, \ell}^{(s)} ] [ \mathcal{\bar{R}}_{k-1, 2\ell, \ell-1}^{(s+2)} ] + [ \mathcal{\bar{T}}_{0, 0,  k+2\ell}^{(s-1)} ] [ \mathcal{\bar{T}}_{\lfloor \frac{k-1}{2} \rfloor, 0, 2\ell}^{(s+2\sigma(k+1)+1)} ] [ \mathcal{\bar{T}}_{\lfloor \frac{k}{2} \rfloor, 2\ell-1, 0}^{(s+2\sigma(k)+1)} ],       \label{relation 6 in the system II}
\end{align}

\begin{align}
 [ \mathcal{\bar{R}}_{k-1, 2\ell+1, \ell}^{(s+2)} ] [ \mathcal{\bar{R}}_{k, 2\ell, \ell}^{(s)} ] =[ \mathcal{\bar{R}}_{k, 2\ell+1, \ell}^{(s)} ] [ \mathcal{\bar{R}}_{k-1, 2\ell, \ell}^{(s+2)} ] + [ \mathcal{\bar{\tilde{T}}}_{k-1, 0,  \ell}^{(s+1)} ] [ \mathcal{\bar{T}}_{\lfloor \frac{(k+1)}{2} \rfloor + \ell, 0, 0}^{(s+2\sigma(k+1)+1)} ] [ \mathcal{\bar{T}}_{\lfloor \frac{k}{2} \rfloor, 2\ell, 0}^{(s+2\sigma(k)+1)} ],  \label{relation 7 in the system II}
\end{align}

\begin{align}
[ \mathcal{\bar{R}}_{k-1, 2\ell+2, \ell}^{(s+2)} ] [ \mathcal{\bar{R}}_{k, 2\ell+1, \ell-1}^{(s)} ]  =[ \mathcal{\bar{R}}_{k, 2\ell+2, \ell}^{(s)} ] [ \mathcal{\bar{R}}_{k-1, 2\ell+1, \ell}^{(s+2)} ] + [ \mathcal{\bar{\tilde{T}}}_{k-1, 0,  \ell}^{(s+1)} ] [ \mathcal{\bar{T}}_{\lfloor \frac{k+1}{2} \rfloor + \ell, 0, 0}^{(s+2\sigma(k+1)+1)} ] [ \mathcal{\bar{T}}_{\lfloor \frac{k}{2} \rfloor, 2\ell+1, 0}^{(s+2\sigma(k)+1)} ], \label{relation 8 in the system II}
\end{align}

\begin{align}
[ \mathcal{\bar{R}}_{0, 2\ell+i, \ell}^{(s)} ] = [ \mathcal{\bar{T}}_{0, 0, 2\ell+i}^{(s-2)} ] [ \mathcal{\bar{T}}_{\ell, 0, 0}^{(s+2)} ], \ i=0, 1, 2.   \label{relation 9 in the system II}
\end{align}
Moreover, the modules corresponding to each summand on the right hand side of the above relations are all irreducible.
\end{theorem}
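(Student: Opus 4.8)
The plan is to derive every relation in Theorem \ref{the system II} from the corresponding relation in Theorem \ref{the system I} by transporting it through the involution $\iota$ of Lemma \ref{involution II}, and to deduce irreducibility of the summands from Theorem \ref{irreducible}. First I would fix one of the relations of Theorem \ref{the system I}, say $[\mathcal{L}][\mathcal{R}] = [\mathcal{T}][\mathcal{B}] + [\mathcal{S}]$, and apply the injective ring homomorphism $\chi_q$ to turn it into an identity $\chi_q(\mathcal{L})\chi_q(\mathcal{R}) = \chi_q(\mathcal{T})\chi_q(\mathcal{B}) + \chi_q(\mathcal{S})$ in $\mathbb{Z}\mathcal{P}$, where each tensor factor of $\mathcal{S}$ contributes a factor to the product $\chi_q(\mathcal{S})$. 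Since $\iota$ is a ring homomorphism, applying it preserves this identity; and since the highest $l$-weight of every module occurring in Theorem \ref{the system I} lies among the monomials listed in Lemma \ref{involution II} (the Kirillov-Reshetikhin factors $\mathcal{T}_{k,0,0}^{(s)}$ and the modules $\mathcal{T}_{0,0,m}^{(s)}$ being the relevant degenerate cases of $T_{k,\ell,0}^{(s)}$, $T_{k,0,m}^{(s)}$, and $R_{k,2\ell+j,\ell}^{(s)}$), Lemma \ref{involution II} lets me rewrite $\iota(\chi_q(m_+)) = \chi_q(\bar m_+)$ for each factor. The result is a genuine $q$-character identity $\chi_q(\bar{\mathcal{L}})\chi_q(\bar{\mathcal{R}}) = \chi_q(\bar{\mathcal{T}})\chi_q(\bar{\mathcal{B}}) + \chi_q(\bar{\mathcal{S}})$, and injectivity of $\chi_q$ then yields $[\bar{\mathcal{L}}][\bar{\mathcal{R}}] = [\bar{\mathcal{T}}][\bar{\mathcal{B}}] + [\bar{\mathcal{S}}]$ in $\rep(U_q\hat{\mathfrak{g}})$.

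The second step is bookkeeping: I would check that the bar of each module appearing in a given relation of Theorem \ref{the system I} is exactly the module written in the corresponding relation of Theorem \ref{the system II}. This is immediate from the explicit definitions of $\bar T_{k,\ell,m}^{(s)}$, $\bar{\tilde T}_{k,\ell,m}^{(s)}$, $\bar S_{k,\ell}^{(s)}$, $\bar R_{k,\ell,m}^{(s)}$, together with the trivial relations (\ref{trivial relations 2}), which play the role for the barred modules that (\ref{trivial relations 1}) plays in the system I; the only cosmetic change is that the two factors on the left-hand side appear in the opposite order, which is harmless because $\rep(U_q\hat{\mathfrak{g}})$ is commutative. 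Running this over the relations (\ref{relation 1 in the system I})--(\ref{relation 9 in the system I}) produces precisely (\ref{relation 1 in the system II})--(\ref{relation 9 in the system II}).

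For the irreducibility claim, I would use that the bar operation $i_a \mapsto i_{-a}$ is realized by an auto-equivalence of the category of finite-dimensional $U_q\hat{\mathfrak{g}}$-modules, namely inversion of the spectral parameter about $a$ (a composition of a spectral shift with the standard spectral-inversion automorphism), which sends $L(m_+)$ to $L(\bar m_+)$, preserves irreducibility, and carries tensor products to tensor products up to the order of the factors. Applying this functor to the modules $\mathcal{T}\otimes\mathcal{B}$ and $\mathcal{S}$ of Theorem \ref{irreducible} shows that $\bar{\mathcal{T}}\otimes\bar{\mathcal{B}}$ and $\bar{\mathcal{S}}$ are irreducible. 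Alternatively, one can argue entirely on $q$-characters: irreducibility of $\mathcal{T}\otimes\mathcal{B}$ is equivalent to $\chi_q(L(TB)) = \chi_q(L(T))\chi_q(L(B))$, and applying $\iota$ together with the computation underlying Lemma \ref{involution II} (extended to the product monomial $TB$, whose factorization into strings is governed by the same $U_{q_i}(\hat{\mathfrak{sl}}_2)$ analysis as in the unbarred case) converts this into the same statement for the barred modules.

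The main obstacle I anticipate is not conceptual but verificational: confirming, relation by relation, that the shifts of spectral parameter produced by $\iota$ agree with the exponents $s$, $s+2$, $s+4$, $s+4k+4$, $s+2\sigma(k)+1$, and so on, written in Theorem \ref{the system II}. Care is needed at the degenerate endpoints (small $k,\ell,m$, and the floor functions $\lfloor \ell/2 \rfloor$, $\lfloor (k\pm 1)/2\rfloor$), where one must fall back on the trivial relations (\ref{trivial relations 2}) to identify a barred $\mathcal{R}$- or $\mathcal{S}$-module with a barred $\mathcal{T}$-module; these are exactly the identifications already recorded there, so no new input is required beyond patient matching.
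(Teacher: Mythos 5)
Your proposal is correct and follows essentially the same route as the paper, which proves Theorem \ref{the system II} by transporting the relations of Theorem \ref{the system I} through the involution $\iota$ of Lemma \ref{involution II} and invoking Theorem \ref{irreducible} for the irreducibility of the summands. Your treatment of the irreducibility step is in fact slightly more careful than the paper's one-line citation, since you note that Lemma \ref{involution II} must be extended to the product monomials $TB$ (or replaced by the spectral-inversion auto-equivalence) to conclude that $\bar{\mathcal{T}}\otimes\bar{\mathcal{B}}$ is irreducible.
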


\begin{proof}
The theorem follows from the relations in Theorem \ref{the system I}, Theorem \ref{irreducible}, and Lemma \ref{involution II}.
\end{proof}

Let us use II to denote the system consisting of the relations in Theorem \ref{the system II} and the relations (\ref{trivial relations 2}), (\ref{usual T equation 1})-(\ref{usual T equation 3}). Then the system II is closed.

Since $\mathcal{\bar{T}}_{k, \ell, 0}^{(s)} = \mathcal{\tilde{T}}_{0, \ell, k}^{(-s-4k-2\ell)}$, the system II contains minimal affinizations $\mathcal{\tilde{T}}_{0, \ell, k}^{(-s-4k-2\ell)}$, $k, \ell \in \mathbb{Z}_{\geq 0}$, $s\in \mathbb{Z}$. Using the shift defined in (\ref{shift}), we can obtain minimal affinizations $\mathcal{\tilde{T}}_{0, \ell, k}^{(s)}$ from $\mathcal{\tilde{T}}_{0, \ell, k}^{(-s-4k-2\ell)}$, $k, \ell \in \mathbb{Z}_{\geq 0}$, $s\in \mathbb{Z}$.

The following proposition is similar to Proposition \ref{compute 1}.
\begin{proposition} \label{compute II}
One can compute the $q$-characters of
\begin{align*}
\mathcal{\bar{T}}_{k, \ell, 0}^{(s)}, \mathcal{\bar{T}}_{k, 0, m}^{(s)}, \mathcal{\bar{T}}_{0, \ell, r}^{(s)}, \mathcal{\bar{\tilde{T}}}_{k, 0, m}^{(s)}, \mathcal{\bar{S}}_{k, \ell}^{(s)}, \mathcal{\bar{R}}_{k, 2\ell+j, \ell}^{(s)}, \ s\in\mathbb{Z}, \ k, \ell, m \in  \mathbb{Z}_{\geq 0}, r\in \{1, 2\},
\end{align*}
recursively, from $\chi_q(1_0), \chi_q(2_0), \chi_q(3_0)$ by using the relations in the system II. $\Box$
\end{proposition}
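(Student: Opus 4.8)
The plan is to deduce Proposition \ref{compute II} from Proposition \ref{compute 1} by transporting the entire recursion through the involution $\iota$ of Lemma \ref{involution II}, rather than redoing the bookkeeping from scratch. First I would record that $\iota$ is a ring automorphism of $\mathbb{Z}\mathcal{P}$: since $\iota$ sends $Y_{i,aq^s} \mapsto Y_{i,aq^{8-s}}^{-1}$, one checks $\iota^2 = \mathrm{id}$, so $\iota$ is an involutive ring homomorphism and in particular invertible. By Lemma \ref{involution II}, for every monomial $m_+$ among $T_{k,\ell,0}^{(s)}, T_{k,0,m}^{(s)}, T_{0,\ell,r}^{(s)}, \tilde{T}_{k,0,m}^{(s)}, S_{k,\ell}^{(s)}, R_{k,2\ell+j,\ell}^{(s)}$ we have $\chi_q(\bar{m}_+) = \iota(\chi_q(m_+))$, so computing the barred $q$-characters is the same as applying $\iota$ to the corresponding unbarred $q$-characters furnished by Proposition \ref{compute 1}.

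Next I would observe that the relations of Theorem \ref{the system II} are exactly the $\iota$-images of the relations of Theorem \ref{the system I}; this is how Theorem \ref{the system II} was obtained, and it follows once more from Lemma \ref{involution II} together with the fact that $\iota$ respects multiplication in $\rep(U_q\hat{\mathfrak{g}})$. Consequently, every step of the recursive scheme of Proposition \ref{compute 1} — in which a single relation $[\mathcal{L}][\mathcal{R}] = [\mathcal{T}][\mathcal{B}] + [\mathcal{S}]$ is used to isolate the unique new factor in terms of strictly smaller, previously-computed $q$-characters — transforms, upon applying $\iota$, into a valid recursive step among the relations of system II. Since $\mathbb{Z}\mathcal{P}$ is a Laurent polynomial ring, hence an integral domain, the exact division needed to solve for the unknown factor is unique and is preserved by $\iota$; thus no new well-foundedness issue arises, and the induction order on $(k,\ell,m)$ together with the choice of which factor is ``new'' is inherited verbatim from the proof of Proposition \ref{compute 1}.

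It then remains to identify the base data. The recursion of Proposition \ref{compute 1} bottoms out at the fundamental $q$-characters $\chi_q(i_s)$, all obtained from $\chi_q(1_0), \chi_q(2_0), \chi_q(3_0)$ via the spectral-shift map (\ref{shift}). I would check that $\iota$ carries this base to itself: a direct computation shows each of $\chi_q(1_0), \chi_q(2_0), \chi_q(3_0)$ is $\iota$-invariant (the listed monomials are reversed by $\iota$), while more generally $\iota(\chi_q(i_s)) = \chi_q(\bar{i}_s) = \chi_q(i_{-s})$ is again a shift of a fundamental character and hence available. Therefore applying $\iota$ to the computation of each $\chi_q(m_+)$ produces a computation of $\chi_q(\bar{m}_+)$ starting from the same fundamental inputs and using only relations of system II, which is the assertion.

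The only point requiring genuine care — and which I expect to be the main, though modest, obstacle — is confirming that $\iota$ interacts correctly with the spectral shifts and with the passage from a monomial to its bar, i.e. verifying $\iota(\chi_q(i_s)) = \chi_q(i_{-s})$ and, more to the point, that the precise index shifts in the sources on the right-hand sides of Theorem \ref{the system II} are indeed the $\iota$-images of those in Theorem \ref{the system I}. Once this compatibility of indices is confirmed, the recursion transports without further difficulty. Alternatively, one may bypass $\iota$ altogether and rerun the proof of Proposition \ref{compute 1} verbatim on the relations of Theorem \ref{the system II}, since those relations have the identical top/bottom/left/right/source structure with the roles of left and right merely exchanged, which does not affect solvability in the commutative ring $\rep(U_q\hat{\mathfrak{g}})$.
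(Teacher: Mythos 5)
Your proposal is correct and is essentially the paper's own (omitted) argument: the paper simply remarks that Proposition \ref{compute II} is ``similar to Proposition \ref{compute 1}'' and leaves the proof to the reader, which is exactly your closing alternative of rerunning the recursion verbatim on the relations of system II. Your primary route --- transporting the entire recursion through the involution $\iota$ of Lemma \ref{involution II}, after checking that $\iota$ is an involutive ring automorphism fixing the fundamental $q$-characters $\chi_q(1_0),\chi_q(2_0),\chi_q(3_0)$ and that exact division in the integral domain $\mathbb{Z}\mathcal{P}$ is preserved --- is just a clean formalization of why that rerun works, consistent with how the paper itself derives Theorem \ref{the system II} from Theorem \ref{the system I} via Lemma \ref{involution II}.
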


\subsection{The system III}
For $s\in \mathbb{Z}, k, \ell \in \mathbb{Z}_{\geq 0}$, we define the following monomials.
\begin{align*}
U_{k, \ell}^{(s)} = \left( \prod_{i=0}^{k-1} 2_{s+2i} \right) \left( \prod_{i=0}^{\ell-1} 3_{s+2k+2i+1} \right),
\end{align*}

\begin{align*}
V_{k, \ell}^{(s)} = \left( \prod_{i=0}^{k-1} 3_{s+4i} \right) \left( \prod_{i=0}^{\ell-1} 3_{s+4k+4i+2} \right),
\end{align*}

\begin{align*}
P_{k, \ell}^{(s)} = \left( \prod_{i=0}^{k-1} 3_{s+4i} \right) 2_{4k+1} \left( \prod_{i=0}^{\ell-1} 3_{s+4k+4i+6} \right),
\end{align*}

\begin{align*}
O_{k, \ell}^{(s)} = \left( \prod_{i=0}^{k-1} 2_{s+2i} \right) 1_{2k+1} 1_{2k+3} \left( \prod_{i=0}^{\ell-1} 2_{s+2k+2i+6} \right).
\end{align*}

For $k \in \mathbb{Z}_{\geq 0}, s \in \mathbb{Z}$, we have the following trivial relations
\begin{equation}
\begin{aligned}
& \mathcal{U}_{k, 0}^{(s)} = \mathcal{T}_{0, k, 0}^{(s-1)}, \ \mathcal{V}_{k, 0}^{(s)} = \mathcal{T}_{k, 0, 0}^{(s)}, \ \mathcal{V}_{0, k}^{(s)} = \mathcal{T}_{k, 0, 0}^{(s+2)}, \\
& \mathcal{O}_{k, 0}^{(s)} = \mathcal{T}_{0, k, 2}^{(s-1)}, \ \mathcal{O}_{0, k}^{(s)} = \mathcal{\tilde{T}}_{2, k, 0}^{(s+6)}, \ \mathcal{P}_{k, 0}^{(s)} = \mathcal{T}_{k, 1, 0}^{(s)}, \ \mathcal{P}_{0, k}^{(s)} = \mathcal{\tilde{T}}_{0, 1, k}^{(s)}. \label{trivial relations 3}
\end{aligned}
\end{equation}

\begin{theorem}[{\cite{Her07}}] \label{special already known}
The modules $\mathcal{\tilde{T}}_{k, \ell, 0}^{(s)}$, $s\in \mathbb{Z}$, $k, \ell \in \mathbb{Z}_{\geq 0}$, are special.
\end{theorem}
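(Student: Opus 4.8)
The plan is to show that $\chi_q(\tilde{T}_{k,\ell,0}^{(s)})$ contains exactly one dominant monomial, namely its highest $l$-weight $m_+=\tilde T_{k,\ell,0}^{(s)}$. By the definition of specialness it suffices to prove that every $m\in\mathscr{M}(L(m_+))$ with $m\neq m_+$ is non-dominant, and I would obtain this from the stronger claim that \emph{every such $m$ is right-negative} (a right-negative monomial has a negative exponent on its rightmost variable and so cannot be dominant). The decisive simplification is that a product of right-negative monomials is again right-negative and each $A_{i,a}^{-1}$ is right-negative. Hence I would argue by contradiction: choose a non-highest monomial $m$ that is \emph{not} right-negative and is maximal with respect to $\leq$. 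Since $\mathscr{M}(L(m_+))\subset m_+\mathcal{Q}^-$, the monomial $m_+$ is the unique maximum, and by the standard downward connectedness of $\mathscr{M}(L(m_+))$ there is a pair $(i,a)$ with $mA_{i,a}\in\mathscr{M}(L(m_+))$ and $mA_{i,a}>m$. By maximality, $mA_{i,a}$ is either $m_+$ or right-negative; in the latter case $m=(mA_{i,a})A_{i,a}^{-1}$ would be a product of two right-negative monomials, hence right-negative, a contradiction. Therefore $mA_{i,a}=m_+$, that is, $m=m_+A_{i,a}^{-1}$ is a cover of $m_+$. Thus it suffices to show that every cover $m_+A_{i,a}^{-1}$ lying in $\mathscr{M}(L(m_+))$ is right-negative.

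To handle this base case I would record the explicit shape
\[
m_+ = \left(\prod_{i=0}^{k-1} 1_{s+2i}\right)\left(\prod_{i=0}^{\ell-1} 2_{s+2k+2i+1}\right),
\]
so that $m_+$ involves only the node-$1$ and node-$2$ variables, the node-$1$ parameters running over $s,s+2,\dots,s+2k-2$ and the node-$2$ parameters over $s+2k+1,\dots,s+2k+2\ell-1$; in particular the rightmost variable is $2_{s+2k+2\ell-1}$ when $\ell\geq1$ and $1_{s+2k-2}$ when $\ell=0$. I would then determine which covers $m_+A_{i,a}^{-1}$ are admissible by testing consistency of the $\mathfrak{sl}_2$-restrictions $\beta_i$: writing $\beta_1(m_+)$ and $\beta_2(m_+)$ as products of strings and comparing with the $\mathfrak{sl}_2$ $q$-characters recalled in the $U_q\hat{\mathfrak{sl}}_2$ subsection, the admissible lowerings are forced to act at the right end of the highest weight, so each admissible $m_+A_{i,a}^{-1}$ acquires a negative exponent on its rightmost variable and is right-negative. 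The explicit fundamental $q$-characters of Lemma \ref{fundamental q-characters} give the precise local behaviour of each lowering step and let me rule out any admissible cover that touches only interior parameters.

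The step I expect to be the main obstacle is the coupling between node $2$ and the short node $3$. A lowering by $A_{2,a}^{-1}=Y_{2,aq}^{-1}Y_{2,aq^{-1}}^{-1}Y_{1,a}Y_{3,a}$ introduces a node-$3$ variable, and conversely $A_{3,a}^{-1}=Y_{3,aq^2}^{-1}Y_{3,aq^{-2}}^{-1}Y_{2,aq}Y_{2,aq^{-1}}$ introduces node-$2$ variables; thus node-$3$ content appears in $\chi_q(L(m_+))$ even though $m_+$ has none, and one must check that the $3$-strings generated during lowering never recombine into a $3$-dominant monomial surviving at the right end. Ruling this out is precisely where the hypothesis that the support of $m_+$ is confined to the simply-laced sub-diagram $\{1,2\}$ of $C_3$ is used, and the careful bookkeeping of spectral parameters for these $3$-strings is the combinatorial heart of the argument. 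Once the candidate monomial set is pinned down, Theorem \ref{truncated} can be invoked as an independent confirmation of the truncated $q$-character, re-deriving the same conclusion and, in effect, validating the use of the FM algorithm on these modules exactly as in the proof of Theorem \ref{special}.
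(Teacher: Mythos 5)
Your overall reduction (maximal counterexample $\Rightarrow$ it suffices to treat the covers $m_+A_{i,a}^{-1}$) is logically sound, but the base case you reduce to is false, so the strategy cannot work. Take $m_+=\tilde T_{k,\ell,0}^{(s)}=\bigl(\prod_{i=0}^{k-1}1_{s+2i}\bigr)\bigl(\prod_{i=0}^{\ell-1}2_{s+2k+2i+1}\bigr)$ with $k,\ell\geq 1$. Since $\beta_1(m_+)$ is a single string, the monomial
\begin{align*}
m_+A_{1,s+2k-1}^{-1}=\Bigl(\prod_{i=0}^{k-2}1_{s+2i}\Bigr)\,1_{s+2k}^{-1}\,2_{s+2k-1}\Bigl(\prod_{i=0}^{\ell-1}2_{s+2k+2i+1}\Bigr)
\end{align*}
does lie in $\mathscr{M}(L(m_+))$, and its rightmost variable is $2_{s+2k+2\ell-1}$ with exponent $+1$: it is \emph{not} right-negative. (Already for $k=\ell=1$, $m_+=1_02_3$ and $m_+A_{1,1}^{-1}=1_2^{-1}2_12_3$ is not right-negative.) The negative exponent created by the lowering sits at the right end of the node-$1$ block only; the node-$2$ block extends strictly further to the right with positive exponents. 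This is exactly the difference between Kirillov--Reshetikhin modules (single-node highest weight, where the paper's remark that all non-highest monomials are right-negative applies) and genuine minimal affinizations with two-node support. So the assertion "every non-highest monomial of $\mathscr{M}(L(m_+))$ is right-negative", which your whole argument rests on, is false for $\tilde T_{k,\ell,0}^{(s)}$, even though the module is in fact special.

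Note also that the paper does not prove this statement at all: it is quoted from Hernandez \cite{Her07}, where the argument for minimal affinizations with $\lambda(\alpha_3^\vee)=0$ in type $C$ is substantially more involved than a right-negativity count (one must show directly that every non-highest monomial fails to be $i$-dominant for some $i$, or argue as in Section \ref{prove special} of this paper by embedding $L(m_+)$ into two different tensor products and intersecting the resulting sets of dominant monomials). Your closing appeal to Theorem \ref{truncated} as "independent confirmation" does not repair the gap, since that theorem only computes a truncated $q$-character once a correct candidate set $\mathcal{M}$ is exhibited, and the truncation discards precisely the monomials at issue.
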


\begin{theorem} \label{special in the system III}
The modules $\mathcal{U}_{k, \ell}^{(s)}$, $\mathcal{V}_{k, \ell}^{(s)}$, $\mathcal{P}_{k, \ell}^{(s)}$, $\mathcal{O}_{k, \ell}^{(s)}$, $s\in \mathbb{Z}$, $k, \ell \in \mathbb{Z}_{\geq 0}$, are special.
\end{theorem}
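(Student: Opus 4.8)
The plan is to show, for each of the four highest $l$-weights $m_{+}\in\{U_{k,\ell}^{(s)},V_{k,\ell}^{(s)},P_{k,\ell}^{(s)},O_{k,\ell}^{(s)}\}$, that the only dominant monomial of $\chi_q(m_{+})$ is $m_{+}$ itself. This is precisely the content of specialness, and I would run the same machinery used for Theorem \ref{special} in Section \ref{prove special}. The driving observation is the one recorded just after (\ref{partial order of monomials}): taking the variable $Y_{i,a^{*}}$ with the largest spectral parameter $a^{*}$ occurring in a right-negative monomial $m\neq 1$, the right-negativity condition forces its exponent to be negative, so $m$ cannot be dominant. Hence it suffices to prove that every monomial $m\in\mathscr{M}(L(m_{+}))$ with $m\neq m_{+}$ is right-negative.

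First I would factor $m_{+}$ as a product of highest $l$-weights of Kirillov--Reshetikhin modules and of fundamental modules. The cleanest case is $V_{k,\ell}^{(s)}$, which is the product of the highest weights of the two node-$3$ Kirillov--Reshetikhin modules $\mathcal{T}_{k,0,0}^{(s)}$ and $\mathcal{T}_{\ell,0,0}^{(s+4k+2)}$, whose strings are in general position (the gap $6$ is not a multiple of the node-$3$ step $4$). The modules $U_{k,\ell}^{(s)}$ likewise split into a node-$2$ Kirillov--Reshetikhin factor together with node-$3$ fundamental factors, while $P_{k,\ell}^{(s)}$ and $O_{k,\ell}^{(s)}$ consist of two node-$3$ (resp.\ node-$2$) strings with the extra fundamental factor $2_{4k+1}$ (resp.\ $1_{2k+1}1_{2k+3}$) inserted in the middle. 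By Lemma \ref{contains in a larger set}, every $m\in\mathscr{M}(L(m_{+}))$ is a product $m=\prod_j p_j$ with each $p_j\in\mathscr{M}$ of the corresponding factor. Since Kirillov--Reshetikhin modules and the fundamental modules of type $C_3$ are special with all non-highest monomials right-negative (Lemma \ref{fundamental q-characters} and the recalled fact from \cite{Her06}), each $p_j$ is either the highest weight of its factor or is right-negative.

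Next I would show that if at least one $p_j$ is right-negative then the whole product $m$ is right-negative, so that $m=m_{+}$ is the only candidate for a dominant monomial. Because a product of right-negative monomials is again right-negative, the only danger is that a positive power carried by a highest-weight factor sits exactly at the rightmost negative power produced by a lowered factor. Ordering the factors by increasing spectral parameter and using that each factor's $q$-character lies in its own $m_{+,j}\mathcal{Q}^{-}$ together with the explicit step sizes of the strings ($q^{2}$ at nodes $1,2$ and $q^{4}$ at node $3$), I would verify that the rightmost negative variable coming from a lowered factor lies strictly to the right of every variable of the factors preceding it, so that no cancellation is possible and $m$ is right-negative. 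This node-by-node bookkeeping is the computational core, and it is carried out exactly as in the proof of Theorem \ref{special}.

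The main obstacle will be the two non-minimal modules $P_{k,\ell}^{(s)}$ and $O_{k,\ell}^{(s)}$, where the middle fundamental factor overlaps in spectral parameter with the neighbouring strings and breaks the clean string picture, so that the purely combinatorial right-negativity argument may not close by itself. For these I would not rely on right-negativity alone but instead invoke Theorem \ref{truncated}: choosing $U\subset I\times\mathbb{C}^{\times}$ to be the relevant finite set of pairs, I would propose an explicit monomial set $\mathcal{M}$ and verify conditions (i)--(iv), thereby computing the truncated $q$-character and confirming directly that $m_{+}$ is its unique dominant monomial. The delicate point is ruling out the sporadic dominant monomials that could a priori arise by combining a non-highest monomial of the middle fundamental factor (read off from Lemma \ref{fundamental q-characters}) with lowered monomials of the two strings; showing that each such candidate is in fact right-negative, hence absent from the dominant monomials, is where the case analysis concentrates and is the hardest part of the argument.
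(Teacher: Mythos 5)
Your reduction of specialness to the statement that every non-highest monomial of $L(m_+)$ is right-negative, carried out by factoring $m_+$ into Kirillov--Reshetikhin and fundamental pieces, breaks down at the step where you claim that ``no cancellation is possible and $m$ is right-negative.'' Cancellations between a lowered (right-negative) monomial of one factor and the highest weight of an adjacent factor do occur, and they produce genuinely dominant monomials in the product of $q$-characters. For example, writing $V_{k,\ell}^{(0)}=m'_1m'_2$ with $m'_1=3_{0}3_{4}\cdots 3_{4k-8}$ and $m'_2=3_{4k-4}3_{4k+2}\cdots 3_{4k+4\ell-2}$, the monomial $m'_1A_{3,4k-6}^{-1}$ carries the factor $3_{4k-4}^{-1}$, which is cancelled by $m'_2$, yielding the dominant monomial $n_1=m_+A_{3,4k-6}^{-1}$ in $\chi_q(m'_1)\chi_q(m'_2)$; the analogous phenomenon produces $m_+A_{3,4k-2}^{-1}A_{2,4k}^{-1}$ for $\mathcal{P}_{k,\ell}$ and four such monomials for $\mathcal{O}_{k,\ell}$. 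So Lemma \ref{contains in a larger set} only places $\mathscr{M}(L(m_+))$ inside a set that already contains several dominant monomials besides $m_+$, and no bookkeeping of rightmost variables can remove them. The missing idea is the comparison of \emph{two different} factorizations: one classifies the finitely many extra dominant monomials occurring in $\chi_q(m'_1)\chi_q(m'_2)$, writes each as $m_+\prod A_{i,a}^{-v_{i,a}}$ (this expression being unique), and then checks that it cannot occur in a second embedding $\mathscr{M}(L(m_+))\subset\mathscr{M}(\chi_q(m''_1)\chi_q(m''_2))$ with a differently chosen split, typically because the required monomial $m''_1A_{i,a}^{-1}\cdots$ is excluded by the FM algorithm for $L(m''_1)$. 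This is how the paper handles $\mathcal{V}_{k,\ell}$, $\mathcal{P}_{k,\ell}$, and $\mathcal{O}_{k,\ell}$ for $k$ beyond the base case; your proposal never invokes a second factorization.

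Your fallback for $\mathcal{P}$ and $\mathcal{O}$, namely Theorem \ref{truncated}, is also not sufficient as stated for general $k$: that theorem only computes the truncated $q$-character, and one must separately argue that the complement $\chi_q(m_+)-\mathrm{trunc}_{m_+\mathcal{Q}_U^-}\chi_q(m_+)$ contains no dominant monomials (in Section \ref{table} this is done by right-negativity of the complement, which is exactly what fails here for the reason above). The paper uses the truncation argument only for $\mathcal{U}_{k,\ell}$ and for the base cases $\mathcal{V}_{1,\ell}$, $\mathcal{P}_{0,\ell}$; it handles $\mathcal{O}_{0,\ell}$ by identifying it with $\tilde{T}_{2,\ell,0}^{(1)}$ and appealing to Theorem \ref{special already known}, and it handles all remaining cases by the two-embedding method. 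As written, your argument would at best establish specialness of $\mathcal{U}_{k,\ell}^{(s)}$ and the small base cases, leaving the general $\mathcal{V}$, $\mathcal{P}$, $\mathcal{O}$ unproved.
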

We will prove Theorem \ref{special in the system III} in Section \ref{prove special}.

Our second main result is the following theorem.
\begin{theorem} \label{the system III}
For $s\in \mathbb{Z}$, $k, \ell \in \mathbb{Z}_{\geq 1}$, $p \in \mathbb{Z}_{\geq 2}$, we have the following relations in $\rep(U_q \hat{\mathfrak{g}})$.
\begin{align}
[ \mathcal{\tilde{T}}_{k, \ell-1, 0}^{(s)} ] [ \mathcal{\tilde{T}}_{k-1, \ell, 0}^{(s+2)} ] =[ \mathcal{\tilde{T}}_{k, \ell, 0}^{(s)} ] [ \mathcal{\tilde{T}}_{k-1, \ell-1, 0}^{(s+2)} ] + [ \mathcal{\tilde{T}}_{\ell-1, 0, 0}^{(s+2k+2)} ] [ \mathcal{U}_{k-1, \ell}^{(s+1)} ], \label{relation 1 in system 3}
\end{align}

\begin{align}
[ \mathcal{U}_{r, \ell}^{(s)} ] = [ \mathcal{ \tilde{T}}_{0, r, \lfloor \frac{\ell}{2} \rfloor }^{(s-1)} ] [ \mathcal{T}_{ \lfloor \frac{\ell+1}{2} \rfloor, 0, 0 }^{(s+2r+1)} ], \quad r=0, 1, \label{relation 2 in system 3}
\end{align}

\begin{align}
[ \mathcal{U}_{p, \ell-1}^{(s)} ] [ \mathcal{U}_{p-1, \ell}^{(s+2)} ] =[ \mathcal{U}_{p, \ell}^{(s)} ] [ \mathcal{U}_{p-1, \ell-1}^{(s+2)} ] +
\begin{cases}
[ \mathcal{\tilde{T}}_{p, \ell-1, 0}^{(s+1)} ] [ \mathcal{T}_{\lfloor \frac{p}{2} \rfloor + \frac{\ell}{2}, 0, 0}^{(s+2\sigma(p)+1)} ] [ \mathcal{V}_{\lfloor \frac{p-1}{2} \rfloor, \frac{\ell}{2}}^{(s+2\sigma(p+1)+1)}], & \text{if } \ell \text{ is even}, \\
[ \mathcal{\tilde{T}}_{p, \ell-1, 0}^{(s+1)} ] [ \mathcal{T}_{\lfloor \frac{p+1}{2} \rfloor + \frac{\ell-1}{2}, 0, 0}^{(s+2\sigma(p+1)+1)} ] [ \mathcal{P}_{\lfloor \frac{p-2}{2} \rfloor, \frac{\ell-1}{2}}^{(s+2\sigma(p)+1)}], & \text{if } \ell \text{ is odd},
\end{cases}                    \label{relation 3 in system 3}
\end{align}

\begin{align}
[ \mathcal{V}_{k, \ell-1}^{(s)} ] [ \mathcal{V}_{k-1, \ell}^{(s+4)} ] =[ \mathcal{V}_{k, \ell}^{(s)} ] [ \mathcal{V}_{k-1, \ell-1}^{(s+4)} ] +
[ \mathcal{O}_{2(k-1), 2(\ell-1)}^{(s+1)} ],         \label{relation 4 in system 3}
\end{align}

\begin{align}
[ \mathcal{O}_{k, \ell-1}^{(s)} ] [ \mathcal{O}_{k-1, \ell}^{(s+2)} ] =[ \mathcal{O}_{k, \ell}^{(s)} ] [ \mathcal{O}_{k-1, \ell-1}^{(s+2)} ] +
[ \mathcal{P}_{\lfloor \frac{k}{2} \rfloor, \lfloor \frac{\ell}{2} \rfloor}^{(s+\sigma(k)+1)} ] [ \mathcal{V}_{\lfloor \frac{k+1}{2} \rfloor, \lfloor \frac{\ell+1}{2} \rfloor}^{(s+\sigma(k+1)+1)} ] [ \mathcal{\tilde{T}}_{k-1, 0, 0}^{(s+1)} ] [ \mathcal{\tilde{T}}_{\ell-1, 0, 0}^{(s+2k+7)} ],          \label{relation 5 in system 3}
\end{align}

\begin{align}
[ \mathcal{P}_{k, \ell-1}^{(s)} ] [ \mathcal{P}_{k-1, \ell}^{(s+2)} ] =[ \mathcal{P}_{k, \ell}^{(s)} ] [ \mathcal{P}_{k-1, \ell-1}^{(s+2)} ] +
[ \mathcal{O}_{2k-1, 2\ell-1}^{(s+1)} ].              \label{relation 6 in system 3}
\end{align}
\end{theorem}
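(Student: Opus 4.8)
The plan is to use that $\chi_q$ is an injective ring homomorphism, so each of (\ref{relation 1 in system 3})--(\ref{relation 6 in system 3}) is equivalent to the corresponding identity
\[
\chi_q(\mathcal{L})\,\chi_q(\mathcal{R}) \;=\; \chi_q(\mathcal{T})\,\chi_q(\mathcal{B}) \;+\; \chi_q(\mathcal{S})
\]
in $\mathbb{Z}\mathcal{P}$, where $\chi_q(\mathcal{S})$ denotes the product of the $q$-characters of the source factors. Every module occurring on either side is special: the $\mathcal{\tilde{T}}_{k,\ell,0}^{(s)}$ by Theorem \ref{special already known}, the modules $\mathcal{U},\mathcal{V},\mathcal{P},\mathcal{O}$ by Theorem \ref{special in the system III}, and the remaining $\mathcal{T}$-type factors by Theorem \ref{special}. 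Hence each $q$-character is the one produced by the FM algorithm from its unique dominant monomial, and is determined by that monomial.

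Since $\rep(U_q\hat{\mathfrak{g}})$ has the classes $[L(m)]$, $m\in\mathcal{P}^+$, as a $\mathbb{Z}$-basis, and every monomial of $\chi_q(L(m))$ lies in $m\mathcal{Q}^-$ with $m$ as its maximal term, two elements of $\rep(U_q\hat{\mathfrak{g}})$ coincide as soon as the dominant monomials of their $q$-characters agree together with the multiplicities of the corresponding simple constituents. I would therefore first check, by a direct comparison of exponents in the defining products, that the highest monomial $LR$ of the left-hand side equals the highest monomial $TB$ of $\mathcal{T}\otimes\mathcal{B}$; this is a short bookkeeping computation for each relation. The heart of the matter is then to enumerate \emph{all} dominant monomials of the product $\chi_q(\mathcal{L})\chi_q(\mathcal{R})$.

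For this enumeration I would combine right-negativity with the explicit string structure of the highest weights. Each factor decomposes, under each $U_{q_i}(\hat{\mathfrak{sl}}_2)$, into Kirillov--Reshetikhin modules, whose non-highest monomials are right-negative (recalled above). Consequently, outside the spectral-parameter window in which the highest weights $L$ and $R$ overlap there can be no cancellation, so any dominant monomial $m_L m_R$ must coincide with $L$ (respectively $R$) away from the overlap, and inside the overlap only finitely many combinations survive. I would make this precise by applying the truncated-character criterion of Theorem \ref{truncated} on the monoid $\mathcal{Q}_U^-$ carrying the dominant monomials, with the fundamental characters of Lemma \ref{fundamental q-characters} as input. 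The outcome should be exactly $TB$, with multiplicity one, together with the highest monomials of the source factors of $\mathcal{S}$; matching the product of those source monomials against each surviving dominant monomial, and invoking the irreducibility of $\mathcal{T}\otimes\mathcal{B}$ (Theorem \ref{irreducible}) to fix the constituent multiplicities on the right, then closes each identity.

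The hard part will be the enumeration of dominant monomials for the two relations whose source term carries several factors and depends on a parity, namely (\ref{relation 3 in system 3}) and (\ref{relation 5 in system 3}). There the candidates arising inside the overlap must be separated according to whether $\ell$ (respectively $k$) is even or odd, one must exclude spurious dominant monomials, and one must match the survivors against the prescribed products of $\mathcal{\tilde{T}}$-, $\mathcal{T}$-, $\mathcal{V}$- and $\mathcal{P}$-type highest weights with the stated floor-function ranks and $\sigma$-shifted spectral parameters. The boundary identity (\ref{relation 2 in system 3}) and the degenerate cases recorded in (\ref{trivial relations 3}) are treated by the same comparison and furnish the base of the recursion.
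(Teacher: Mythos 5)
Your proposal is correct and takes essentially the same route as the paper: the paper's Lemma \ref{dominant monomials} (cases (9)--(13)) classifies the dominant monomials of $\chi_q(\mathcal{L})\chi_q(\mathcal{R})$ and of $\chi_q(\mathcal{T})\chi_q(\mathcal{B})$ using right-negativity and the FM algorithm, the source products are shown to be special exactly as in Lemma \ref{dominant monomials in sources}, and each relation follows by comparing the dominant monomials of the two sides. The only superfluous step is your appeal to the irreducibility of $\mathcal{T}\otimes\mathcal{B}$, which is not needed for the Grothendieck-ring identity (it is established afterwards, in Section \ref{prove irreducible}), since $[\mathcal{T}][\mathcal{B}]$ is the class of the tensor product in any case.
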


We will prove Theorem \ref{the system III} in Section \ref{prove system}.

Note that the relations for the modules $\mathcal{O}_{0, k}^{(s)} = \mathcal{\tilde{T}}_{2, k, 0}^{(s+6)}$ are contained in (\ref{relation 1 in system 3}). The modules $\mathcal{T}_{0, k, 2}^{(s-1)}$, $\mathcal{T}_{k, 1, 0}^{(s)}$ in (\ref{trivial relations 3}) can be computed by using the relations (\ref{relation 1 in the system I}, $\ell=1$), (\ref{relation 5 in the system I}), (\ref{relation 7 in the system I}, $\ell=0$) in the system I. The modules $\mathcal{\tilde{T}}_{0, 1, k}^{(s)}$ in (\ref{trivial relations 3}) can be computed by using the relations (\ref{relation 1 in the system II}, $\ell=1$) and (\ref{relation 7 in the system II}, $\ell=0$) in the system II.

Let us use III to denote the system consisting of the relations in Theorem \ref{the system III},  the relations (\ref{relation 1 in the system I}, $\ell=1$), (\ref{relation 5 in the system I}), (\ref{relation 7 in the system I}, $\ell=0$) in the system I, the relations (\ref{relation 1 in the system II}, $\ell=1$), (\ref{relation 7 in the system II}, $\ell=0$) in the system II, and the relations (\ref{trivial relations 3}), (\ref{usual T equation 1})-(\ref{usual T equation 3}). Then the system III is closed.

All relations except (\ref{relation 2 in system 3}) in Theorem \ref{the system III} are written in the form $[\mathcal{L}] [\mathcal{R}]=[\mathcal{T}] [\mathcal{B}] + [\mathcal{S}]$. We have the following theorem.
\begin{theorem} \label{irreducible 3}
For each relation in Theorem \ref{the system III}, all summands on the right hand side, $\mathcal{T}\otimes \mathcal{B}$ and $\mathcal{S}$, are irreducible.
\end{theorem}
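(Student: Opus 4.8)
The plan is to reduce the irreducibility of each summand to a statement about unique dominant monomials, exactly as in the proof of Theorem~\ref{irreducible}. Since $\chi_q$ is an injective ring homomorphism, for every factorization $\mathcal{S} = \mathcal{N}_1 \otimes \cdots \otimes \mathcal{N}_p$ (and likewise for $\mathcal{T}\otimes\mathcal{B}$) one has $\chi_q(\mathcal{N}_1 \otimes \cdots \otimes \mathcal{N}_p) = \prod_{j} \chi_q(\mathcal{N}_j)$. By the definition of \emph{special} and the remark that a special module is irreducible, it is enough to prove that each such product of $q$-characters contains exactly one dominant monomial, namely the product $\prod_j n_j$ of the highest $l$-weights of the factors. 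Thus the theorem follows once I show that no product $\prod_j m_j$ with $m_j \in \mathscr{M}(\mathcal{N}_j)$ and at least one $m_j \neq n_j$ is dominant.

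First I would record, for each of the relations \eqref{relation 1 in system 3}--\eqref{relation 6 in system 3}, the explicit factors occurring in $\mathcal{T}\otimes\mathcal{B}$ and in $\mathcal{S}$, together with their highest $l$-weights read off from the definitions of $T_{k,\ell,m}^{(s)}$, $\tilde{T}_{k,\ell,m}^{(s)}$, $U_{k,\ell}^{(s)}$, $V_{k,\ell}^{(s)}$, $P_{k,\ell}^{(s)}$, $O_{k,\ell}^{(s)}$. Every factor is special: this is Theorem~\ref{special}, Theorem~\ref{special already known}, and Theorem~\ref{special in the system III}. Hence each $\chi_q(\mathcal{N}_j)$ has $n_j$ as its unique dominant monomial, and every other monomial $m_j \in \mathscr{M}(\mathcal{N}_j)$ satisfies $m_j < n_j$ and is right-negative; this last property I would verify from the explicitly FM-computable $q$-characters of the factors, using that $\mathscr{M}(L(n_j)) \subset n_j \mathcal{Q}^{-}$.

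The core step is then the right-negativity bookkeeping. Suppose $M = \prod_j m_j$ is dominant with some $m_{j_0}$ strictly below $n_{j_0}$, so that $m_{j_0}$ is right-negative. Let $aq^c$ be the rightmost spectral parameter (largest $c$) occurring in $M$. Using the explicit spectral-parameter supports of the factors, which are controlled by the upper-subscripts $s, s+2, s+4, \dots$ and the shifts built into the relations, I would show that the rightmost variable of $M$ receives a strictly negative contribution from the right-negative factors that cannot be cancelled by the dominant factors, since the positive monomials $n_j$ do not extend far enough to the right. This contradicts $j$-dominance of $M$ at the corresponding node $j$, and forces $m_j = n_j$ for all $j$. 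The same argument, applied with $\mathcal{T}\otimes\mathcal{B}$ in place of $\mathcal{S}$, settles the top-times-bottom summands.

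The hard part will be the relations \eqref{relation 3 in system 3} and \eqref{relation 5 in system 3}, where $\mathcal{S}$ is a tensor product of three or four factors with overlapping parameter ranges and where the factors $\mathcal{U}, \mathcal{V}, \mathcal{P}, \mathcal{O}$ are not minimal affinizations. For these I expect to need a careful case analysis according to the parities encoded by $\sigma$ and the floor functions, together with the precise (truncated) $q$-characters of $\mathcal{U}, \mathcal{V}, \mathcal{P}, \mathcal{O}$ obtained via Theorem~\ref{truncated} and the FM algorithm, in order to rule out accidental dominant monomials arising from products of several non-highest monomials. Once the right-negativity estimates are established uniformly across the parity cases, the irreducibility of every summand in Theorem~\ref{the system III} follows.
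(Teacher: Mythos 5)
Your treatment of $\mathcal{S}$ is essentially the paper's: one shows that the product of the $q$-characters of the source factors contains a unique dominant monomial (this is Lemma \ref{dominant monomials in sources}), so $\mathcal{S}$ is special and hence irreducible. But your reduction fails for the summand $\mathcal{T}\otimes\mathcal{B}$. You propose to prove that $\chi_q(\mathcal{T})\chi_q(\mathcal{B})$ contains exactly one dominant monomial; this is false. By Lemma \ref{dominant monomials} (cases (9)--(13)), for each relation in Theorem \ref{the system III} the product $\chi_q(\mathcal{T})\chi_q(\mathcal{B})$ contains the dominant monomials $M_0,\dots,M_{k+\ell-2}$ (for instance for $\chi_q(\tilde{T}_{k,\ell,0}^{(s)})\chi_q(\tilde{T}_{k-1,\ell-1,0}^{(s+2)})$), and as soon as $k+\ell\geq 3$ there is more than one. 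Indeed the relation $[\mathcal{L}][\mathcal{R}]=[\mathcal{T}][\mathcal{B}]+[\mathcal{S}]$ is verified in the paper precisely by matching these several dominant monomials on the two sides, so they cannot be eliminated by right-negativity bookkeeping: the products $M_i=m_1m_2$ with $m_1\neq n_1$ really are dominant. (Relatedly, your claim that every non-highest monomial of a special factor is right-negative is not true for these minimal affinizations; the speciality proofs in Section \ref{prove special} use truncated $q$-characters exactly because the monomials inside the truncation are in general neither dominant nor right-negative.)

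The missing idea is how to prove irreducibility of a tensor product that is \emph{not} special. The paper does this in Lemma \ref{monomials that are not in tensor product}: for each non-highest dominant monomial $M_i$ of $\chi_q(\mathcal{T})\chi_q(\mathcal{B})$ it exhibits an explicit monomial $n_i\in\mathscr{M}(L(M_i))$ (obtained from $M_i$ by a further product of $A_{j,b}^{-1}$, justified by an $\hat{\mathfrak{sl}}_2$-reduction or by Theorem \ref{truncated}) which does not lie in $\mathscr{M}(\mathcal{T}\otimes\mathcal{B})$. Hence $L(M_i)$ cannot be a subquotient of $\mathcal{T}\otimes\mathcal{B}$; since any simple subquotient other than $L(TB)$ would have to have some $M_i$, $i\geq 1$, as its highest $l$-weight, $\mathcal{T}\otimes\mathcal{B}$ is irreducible. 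Your outline contains no substitute for this step, and no amount of parity case analysis on $\sigma$ and the floor functions will produce one, because the statement you reduce to is false.
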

We will prove Theorem \ref{irreducible 3} in Section \ref{prove irreducible}.

The system III can be used to compute the $q$-characters of the modules in the system III. The following proposition can be proved by similar arguments as the proof of Proposition \ref{compute 1}.
\begin{proposition} \label{compute 3}
One can compute the $q$-characters of $\mathcal{U}_{k, \ell}^{(s)}$, $\mathcal{V}_{k, \ell}^{(s)}$, $\mathcal{P}_{k, \ell}^{(s)}$, $\mathcal{O}_{k, \ell}^{(s)}$, $s\in \mathbb{Z}$, $k, \ell \in \mathbb{Z}_{\geq 0}$, recursively, from $\chi_q(1_0), \chi_q(2_0)$, and $\chi_q(3_0)$ by using the relations in the system III. $\Box$
\end{proposition}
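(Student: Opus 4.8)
The plan is to run a single well-founded induction, exactly as in the proof of Proposition \ref{compute 1}, exploiting the fact recalled in Section \ref{background} that $\chi_q \colon \rep(U_q\hat{\mathfrak{g}}) \to \mathbb{Z}\mathcal{P} = \mathbb{Z}[Y_{i,a}^{\pm 1}]_{i\in I, a\in\mathbb{C}^\times}$ is an injective ring homomorphism whose target is an integral domain (a Laurent polynomial ring, so that exact division, when it exists, is unique). Every relation in Theorem \ref{the system III}, together with the auxiliary relations (\ref{relation 1 in the system I}, $\ell=1$), (\ref{relation 5 in the system I}), (\ref{relation 7 in the system I}, $\ell=0$), (\ref{relation 1 in the system II}, $\ell=1$), (\ref{relation 7 in the system II}, $\ell=0$), the trivial relations (\ref{trivial relations 3}), and the usual T-system relations (\ref{usual T equation 1})--(\ref{usual T equation 3}), either is a product identity (as in (\ref{relation 2 in system 3})) or has the shape $[\mathcal{L}][\mathcal{R}] = [\mathcal{T}][\mathcal{B}] + [\mathcal{S}]$. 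Applying $\chi_q$ turns the latter into $\chi_q(\mathcal{L})\chi_q(\mathcal{R}) = \chi_q(\mathcal{T})\chi_q(\mathcal{B}) + \chi_q(\mathcal{S})$ in $\mathbb{Z}\mathcal{P}$, whence $\chi_q(\mathcal{T}) = (\chi_q(\mathcal{L})\chi_q(\mathcal{R}) - \chi_q(\mathcal{S}))/\chi_q(\mathcal{B})$. This division is legitimate: the numerator is known \emph{a priori} to be divisible by $\chi_q(\mathcal{B})$ in the domain $\mathbb{Z}\mathcal{P}$, so once the three $q$-characters on the right are in hand the quotient is computed by exact Laurent-polynomial division.

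Next I would designate in each relation the single module to be solved for, namely the top module $\mathcal{T}$ in the relations of the above shape and the left-hand module in the product relations, and then organize all the modules appearing in the system --- the families $\mathcal{U}, \mathcal{V}, \mathcal{P}, \mathcal{O}$ together with the minimal affinizations $\mathcal{\tilde{T}}_{k,\ell,0}^{(s)}$, the Kirillov--Reshetikhin modules $\mathcal{T}_{k,0,0}^{(s)}$, and the modules furnished by Propositions \ref{compute 1} and \ref{compute II} --- according to the spectral span $d(m_+) = \max\{t : Y_{i,aq^t}\mid m_+\} - \min\{t : Y_{i,aq^t}\mid m_+\}$ of the highest $l$-weight monomial $m_+$. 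The base cases are: the fundamental modules, whose $q$-characters are given by Lemma \ref{fundamental q-characters}; the Kirillov--Reshetikhin modules, obtained from the fundamentals via (\ref{usual T equation 1})--(\ref{usual T equation 3}); and the boundary modules $\mathcal{P}_{k,0}^{(s)} = \mathcal{T}_{k,1,0}^{(s)}$, $\mathcal{O}_{k,0}^{(s)} = \mathcal{T}_{0,k,2}^{(s-1)}$, $\mathcal{P}_{0,k}^{(s)} = \mathcal{\tilde{T}}_{0,1,k}^{(s)}$ of (\ref{trivial relations 3}), which lie in systems I and II and are thus computable by Propositions \ref{compute 1} and \ref{compute II}, while $\mathcal{O}_{0,k}^{(s)} = \mathcal{\tilde{T}}_{2,k,0}^{(s+6)}$ is produced by the recursion (\ref{relation 1 in system 3}).

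The heart of the argument is to verify that $d$ strictly decreases, i.e. that in each relation the designated output module has strictly larger span than every module on the opposite side; I would check this relation by relation. The $\mathcal{\tilde{T}}_{k,\ell,0}^{(s)} \leftrightarrow \mathcal{U}_{p,\ell}^{(s)}$ interdependence coming from (\ref{relation 1 in system 3}) and (\ref{relation 3 in system 3}) is controlled because $\mathcal{\tilde{T}}_{k,\ell,0}^{(s)}$ and $\mathcal{U}_{k,\ell}^{(s)}$ both have span $2k+2\ell-1$, and in each of these relations the index of the companion family drops by one, so $d$ strictly decreases around the cycle. The subtler checks are the $\mathcal{V} \leftrightarrow \mathcal{O} \leftrightarrow \mathcal{P}$ cycle in (\ref{relation 4 in system 3})--(\ref{relation 6 in system 3}): there the source indices are doubled (in (\ref{relation 4 in system 3}) and (\ref{relation 6 in system 3})) or contracted by floor functions (in (\ref{relation 5 in system 3})), but the span is scale-balanced, since $\mathcal{V}, \mathcal{P}$ live on spectral step $4$ (node $3$ is long, $r_3=2$) whereas $\mathcal{U}, \mathcal{O}$ live on step $2$. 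For instance the source $\mathcal{O}_{2(k-1),2(\ell-1)}^{(s+1)}$ of (\ref{relation 4 in system 3}) has span $4k+4\ell-4 < 4k+4\ell-2 = d(\mathcal{V}_{k,\ell}^{(s)})$, the source $\mathcal{O}_{2k-1,2\ell-1}^{(s+1)}$ of (\ref{relation 6 in system 3}) has span $4k+4\ell < 4k+4\ell+2 = d(\mathcal{P}_{k,\ell}^{(s)})$, and both sources of (\ref{relation 5 in system 3}) have span at most $2k+2\ell+2 < 2k+2\ell+4 = d(\mathcal{O}_{k,\ell}^{(s)})$.

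I expect these span computations --- together with confirming that, apart from the output, every module in each relation has already been produced at an earlier stage --- to be the main (if essentially routine) obstacle; they are where the intertwining of the four families and the doubling/halving of indices must be reconciled, and a careless choice of monovariant (e.g. total number of $Y$-factors) would fail precisely here. Once they are in place, well-founded induction on $d$, combined with the exact-division step of the first paragraph, produces $\chi_q(\mathcal{U}_{k,\ell}^{(s)})$, $\chi_q(\mathcal{V}_{k,\ell}^{(s)})$, $\chi_q(\mathcal{P}_{k,\ell}^{(s)})$, and $\chi_q(\mathcal{O}_{k,\ell}^{(s)})$ recursively from $\chi_q(1_0)$, $\chi_q(2_0)$, $\chi_q(3_0)$, which is the assertion of the proposition.
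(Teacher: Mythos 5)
Your argument is correct, and it reorganizes rather than reproduces the paper's reasoning. The paper gives no proof of Proposition \ref{compute 3} beyond the remark that it follows by the same arguments as Proposition \ref{compute 1}, whose proof is a sequence of nested inductions on the discrete index pairs (with strengthened statements such as (2') and (3') and explicitly listed orders of computation at each stage). You replace that bookkeeping by a single well-founded induction on the spectral span $d$ of the highest $l$-weight, and your check that $d$ strictly drops from the designated output to every other module in each relation of system III is the genuinely load-bearing step: it works across the $\mathcal{\tilde{T}}\leftrightarrow\mathcal{U}$ interdependence of (\ref{relation 1 in system 3}) and (\ref{relation 3 in system 3}) because $d(\mathcal{U}_{k-1,\ell})=2k+2\ell-3<2k+2\ell-1=d(\mathcal{\tilde{T}}_{k,\ell,0})$, and across the $\mathcal{V}\to\mathcal{O}\to\mathcal{P}\to\mathcal{O}$ cycle because the index doubling in (\ref{relation 4 in system 3}) and (\ref{relation 6 in system 3}) is exactly offset by the factor-of-two difference in spectral step, e.g. $d(\mathcal{O}_{2(k-1),2(\ell-1)})=4k+4\ell-4<4k+4\ell-2=d(\mathcal{V}_{k,\ell})$; your span values for $\mathcal{U},\mathcal{V},\mathcal{P},\mathcal{O}$ and the inequalities in (\ref{relation 5 in system 3}) all check out. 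You also correctly isolate the boundary inputs $\mathcal{P}_{k,0}^{(s)}=\mathcal{T}_{k,1,0}^{(s)}$, $\mathcal{O}_{k,0}^{(s)}=\mathcal{T}_{0,k,2}^{(s-1)}$, $\mathcal{P}_{0,k}^{(s)}=\mathcal{\tilde{T}}_{0,1,k}^{(s)}$ and the $r=0,1$ cases of (\ref{relation 2 in system 3}) as the places where systems I and II must be imported, which is exactly what the paper's definition of the closed system III provides for; and the exact-division step via injectivity of $\chi_q$ into the Laurent polynomial domain is the same (implicit) step the paper uses when it ``considers the module to be computed as a top module.'' What your organization buys is a uniform termination certificate that does not need to be re-derived family by family; what the paper's staged induction buys is an explicit algorithmic order in which the $q$-characters are produced. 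Either suffices.
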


\subsection{The system IV}
We will describe the system IV which is dual to the system III.
Let $\bar{U}_{k, \ell}^{(s)}, \bar{V}_{k, \ell}^{(s)}, \bar{P}_{k, \ell}^{(s)}, \bar{O}_{k, \ell}^{(s)}$ be the monomials obtained from $U_{k, \ell}^{(s)}$, $V_{k, \ell}^{(s)}$, $P_{k, \ell}^{(s)}$, $O_{k, \ell}^{(s)}$ by replacing $i_a$ with $i_{-a}$, $i=1, 2, 3$. Namely,
\begin{align*}
\bar{U}_{k, \ell}^{(s)} = \left( \prod_{i=0}^{\ell-1} 3_{-s-2k-2i-1} \right) \left( \prod_{i=0}^{k-1} 2_{-s-2i} \right),
\end{align*}

\begin{align*}
\bar{V}_{k, \ell}^{(s)} = \left( \prod_{i=0}^{\ell-1} 3_{-s-4k-4i-2} \right) \left( \prod_{i=0}^{k-1} 3_{-s-4i} \right),
\end{align*}

\begin{align*}
\bar{P}_{k, \ell}^{(s)} = \left( \prod_{i=0}^{\ell-1} 3_{-s-4k-4i-6} \right)  2_{-4k-1} \left( \prod_{i=0}^{k-1} 3_{-s-4i} \right),
\end{align*}

\begin{align*}
\bar{O}_{k, \ell, m}^{(s)} = \left( \prod_{i=0}^{\ell-1} 2_{-s-2k-2i-6} \right)  1_{-2k-1} 1_{-2k-3} \left( \prod_{i=0}^{k-1} 2_{-s-2i} \right).
\end{align*}

For $k \in \mathbb{Z}_{\geq 0}, s \in \mathbb{Z}$, we have the following trivial relations
\begin{equation}
\begin{aligned}
& \mathcal{\bar{U}}_{k, 0}^{(s)} = \mathcal{\bar{T}}_{0, k, 0}^{(s-1)}, \ \mathcal{\bar{V}}_{k, 0}^{(s)} = \mathcal{\bar{T}}_{k, 0, 0}^{(s)}, \ \mathcal{\bar{V}}_{0, k}^{(s)} = \mathcal{\bar{T}}_{k, 0, 0}^{(s+2)}, \\
& \mathcal{\bar{O}}_{k, 0}^{(s)} = \mathcal{\bar{T}}_{0, k, 2}^{(s-1)}, \ \mathcal{\bar{O}}_{0, k}^{(s)} = \mathcal{\bar{\tilde{T}}}_{2, k, 0}^{(s+6)}, \\
& \mathcal{\bar{P}}_{k, 0}^{(s)} = \mathcal{\bar{T}}_{k, 1, 0}^{(s)}, \ \mathcal{\bar{P}}_{0, k}^{(s)} = \mathcal{\bar{\tilde{T}}}_{0, 1, k}^{(s)} = \mathcal{T}_{k, 1, 0}^{(-s-4k-2)}. \label{trivial relations 4}
\end{aligned}
\end{equation}

The following theorem is proved by using the dual arguments of the proof of Theorem \ref{special in the system III}.
\begin{theorem} \label{anti-special in the system IV}
The modules $\mathcal{\bar{\tilde{T}}}_{k, \ell, 0}^{(s)}$, $\mathcal{\bar{U}}_{k, \ell}^{(s)}$, $\mathcal{\bar{V}}_{k, \ell}^{(s)}$, $\mathcal{\bar{P}}_{k, \ell}^{(s)}$, $\mathcal{\bar{O}}_{k, \ell}^{(s)}$, $s\in \mathbb{Z}$, $k, \ell \in \mathbb{Z}_{\geq 0}$, are anti-special. $\Box$
\end{theorem}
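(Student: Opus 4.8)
The plan is to deduce anti-speciality by dualizing the proof of Theorem~\ref{special in the system III}, using that each barred module is the spectral reflection of a module already shown to be special. Recall that $\mathcal{\bar X}=L(\bar X)$, where the highest $l$-weight $\bar X$ is obtained from $X$ by the spectral reflection $\nu\colon Y_{i,aq^{s}}\mapsto Y_{i,aq^{-s}}$, and that the five unbarred families $\mathcal{\tilde T}_{k,\ell,0}^{(s)}$, $\mathcal U_{k,\ell}^{(s)}$, $\mathcal V_{k,\ell}^{(s)}$, $\mathcal P_{k,\ell}^{(s)}$, $\mathcal O_{k,\ell}^{(s)}$ are special by Theorems~\ref{special already known} and \ref{special in the system III}. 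Hence for each such $X$ the character $\chi_q(X)$ has a unique dominant monomial, namely $X$, and every other monomial of $\chi_q(X)$ is right-negative.

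First I would record the formal properties of the duality involution $\iota\colon Y_{i,aq^{s}}\mapsto Y_{i,aq^{8-s}}^{-1}$ of Lemma~\ref{involution II}. A direct check gives $\iota(A_{i,aq^{s}})=A_{i,aq^{8-s}}^{-1}$, so $\iota$ sends $\mathcal Q^{+}$ into $\mathcal Q^{-}$ and therefore reverses the partial order $\leq$, interchanging the maximal and minimal monomials of any finite subset of $\mathcal P$. Moreover $\iota$ sends dominant monomials to anti-dominant ones, and, since it combines sign reversal with the reflection $s\mapsto 8-s$, it carries right-negative monomials to \emph{left-positive} ones, i.e.\ monomials whose variables of locally minimal spectral index occur with positive exponent. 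The key input is then the system-IV analogue of Lemma~\ref{involution II}: the identity $\chi_q(\bar X)=\iota(\chi_q(X))$ for $X$ ranging over $\tilde T_{k,\ell,0}^{(s)}$, $U_{k,\ell}^{(s)}$, $V_{k,\ell}^{(s)}$, $P_{k,\ell}^{(s)}$, $O_{k,\ell}^{(s)}$. I would prove it exactly as Lemma~7.3 of \cite{LM12}: $\iota$ carries the special, hence FM-computable, character of $X$ to a polynomial whose unique dominant monomial is $\bar X$ and which is consistent under each $\mathfrak{sl}_2$-restriction $\beta_{i}$, so by injectivity of $\chi_q$ it must equal $\chi_q(\bar X)$.

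Granting this identity, the conclusion is immediate. Applying $\iota$ to $\chi_q(X)$, the unique dominant monomial $X$ becomes the unique anti-dominant monomial $\iota(X)$ of $\chi_q(\bar X)$, while every remaining (right-negative) monomial becomes left-positive and so cannot be anti-dominant. Thus each $\chi_q(\bar X)$ contains exactly one anti-dominant monomial, i.e.\ $\mathcal{\bar X}$ is anti-special; running this uniformly over the five families yields the theorem.

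Alternatively, and this is what the phrase ``dual arguments'' refers to literally, one may bypass $\iota$ and mirror the proof of Theorem~\ref{special in the system III} directly: read off the lowest $l$-weight $\bar X_{-}$ of each barred module and show, by the downward form of the Frenkel--Mukhin algorithm, that every monomial strictly above $\bar X_{-}$ is left-positive, using that each $A_{i,a}$ (rather than $A_{i,a}^{-1}$) is left-positive and that products and $\leq$-predecessors of left-positive monomials remain so. I expect the main obstacle to lie precisely here: the right-negativity estimates driving the special case are asymmetric in the spectral parameter, so their reflected left-positive counterparts must be re-established in the $s\mapsto -s$ direction, and one must check that the $\mathfrak{sl}_2$-reduction controlling the anti-dominant monomials behaves as cleanly from below as the original does from above. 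Once this left-positive analogue of the right-negativity lemma is in place, the remainder is the same formal bookkeeping already carried out for the special modules.
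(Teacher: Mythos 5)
Your proposal is correct, and in fact it contains the paper's argument as its second route: the paper's entire proof of Theorem \ref{anti-special in the system IV} is the single sentence that it ``is proved by using the dual arguments of the proof of Theorem \ref{special in the system III}'', i.e.\ precisely the mirrored, ``left-positive'' rerun of the right-negativity bookkeeping that you describe (and whose nontrivial content you correctly locate in re-establishing the reflected negativity estimates and the $\mathfrak{sl}_2$-reductions from below). Your primary route, via the involution $\iota$ of Lemma \ref{involution IV} together with the speciality of the unbarred modules from Theorems \ref{special already known} and \ref{special in the system III}, is a genuinely cleaner packaging: since $\iota(A_{i,aq^{s}})=A_{i,aq^{8-s}}^{-1}$ and $\iota$ exchanges dominant with anti-dominant monomials bijectively, the identity $\chi_q(\bar m_+)=\iota(\chi_q(m_+))$ immediately converts ``exactly one dominant monomial'' into ``exactly one anti-dominant monomial'', with no need to redo any monomial analysis. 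The only caution is logical order: in the paper Lemma \ref{involution IV} is stated \emph{after} this theorem and its proof is itself only sketched by reference to Lemma 7.3 of \cite{LM12}, so if one proved that lemma by matching the outputs of the FM algorithm and its anti-dominant dual, the $\iota$-route would be circular. It is not, provided one derives the character identity from the general duality formula $\chi_q(L(m)^{*})$ being obtained from $\chi_q(L(m))$ by $Y_{i,a}\mapsto Y_{i,aq^{-8}}^{-1}$ in type $C_3$ (where $w_0=-1$), which holds for arbitrary simple modules and needs no speciality hypothesis; with that reading your first argument is complete and is the preferable one.
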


\begin{lemma} \label{involution IV}
Let $\iota: \mathbb{Z}\mathcal{P} \to \mathbb{Z}\mathcal{P}$ be a homomorphism of rings such that $Y_{i, aq^{s}} \mapsto Y_{i, aq^{8-s}}^{-1}$, $i=1, 2, 3$, for all $a \in \mathbb{C}^{\times}, s\in \mathbb{Z}$. Let $m_+$ be one of the monomials
\begin{align*}
\tilde{T}_{k, \ell, 0}^{(s)}, \ U_{k, \ell}^{(s)}, \ V_{k, \ell}^{(s)}, \ P_{k, \ell}^{(s)}, \ O_{k, \ell}^{(s)}, \ s\in \mathbb{Z}, k, \ell \in \mathbb{Z}_{\geq 0},
\end{align*}
Then $\chi_q(\bar{m_+}) = \iota(\chi_q(m_+)) $.
\end{lemma}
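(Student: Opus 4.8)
The plan is to deduce the identity from the behaviour of $\iota$ on the elements $A_{i,a}$ together with the special/anti-special dichotomy, so that $\iota$ carries the Frenkel--Mukhin computation of $\chi_q(m_+)$ onto the mirror (anti-special) computation of $\chi_q(\bar m_+)$. The first step is formal: $\iota$ is a ring involution, since $\iota^2(Y_{i,aq^s}) = \iota(Y_{i,aq^{8-s}}^{-1}) = Y_{i,aq^s}$. Substituting directly into the definitions of $A_{1,a}, A_{2,a}, A_{3,a}$ one checks that
\begin{align*}
\iota(A_{i, aq^s}) = A_{i, aq^{8-s}}^{-1}, \qquad i=1,2,3,\ s\in\mathbb{Z}.
\end{align*}
The shift $8$, which equals $r^\vee h^\vee$ for $C_3$, is exactly what makes this close up. Consequently $\iota$ interchanges the monoids $\mathcal{Q}^+$ and $\mathcal{Q}^-$, reverses the partial order (\ref{partial order of monomials}), and interchanges dominant and anti-dominant monomials.

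Next I would invoke the special/anti-special structure of the two sides. Each $m_+$ in the list is special by Theorems \ref{special already known} and \ref{special in the system III}, so $\chi_q(m_+)\in m_+\mathcal{Q}^-$ has $m_+$ as its unique dominant monomial; applying the order-reversing involution $\iota$, the polynomial $\iota(\chi_q(m_+))$ lies in $\iota(m_+)\mathcal{Q}^+$ and has $\iota(m_+)$ as its unique anti-dominant monomial. On the other side $\bar m_+$ is anti-special by Theorem \ref{anti-special in the system IV}, so $\chi_q(\bar m_+)$ is the unique element of $\bar m_+\mathcal{Q}^-$ with a single anti-dominant monomial whose $U_{q_i}(\hat{\mathfrak{sl}}_2)$-restrictions are consistent; equivalently it is produced by the anti-special (mirror) form of the FM algorithm starting from its lowest monomial.

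The heart of the argument is then to verify that $\iota(\chi_q(m_+))$ satisfies precisely this mirror characterization. Here the key compatibility is $\beta_i\circ\iota = \iota_i\circ\beta_i$, where $\iota_i$ is the $U_{q_i}(\hat{\mathfrak{sl}}_2)$-involution $Y_{aq^s}\mapsto Y_{aq^{8-s}}^{-1}$; this holds for all three nodes with the same constant $8$ and is immediate from the definition of $\beta_i$. It then suffices to note that $\iota_i$ sends every Kirillov--Reshetikhin $\mathfrak{sl}_2$-character to a Kirillov--Reshetikhin $\mathfrak{sl}_2$-character, which follows at once from the palindromy of the explicit formula $\chi_q(W_k^{(a)}) = X_k^{(a)}\sum_{i=0}^k\prod_{j=0}^{i-1} A_{aq^{k-2j}}^{-1}$ under inverting the variables and reflecting the spectral parameter about the centre of the string $\Sigma_k^{(a)}$. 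Granting these two facts, $\iota$ transports the FM recursion for $m_+$ step by step onto the mirror FM recursion for the anti-special module with anti-dominant monomial $\iota(m_+)$; since $\chi_q$ is injective, the identity $\iota(\chi_q(m_+)) = \chi_q(\bar m_+)$ follows once $\iota(m_+)$ is identified with the anti-dominant monomial of $\chi_q(\bar m_+)$.

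The main obstacle is the bookkeeping that glues these pieces together, in two places. First one must confirm that $\iota(m_+)$ is genuinely the unique anti-dominant monomial of $\chi_q(\bar m_+)$; I would verify this separately for each family $\tilde T_{k,\ell,0}^{(s)}, U_{k,\ell}^{(s)}, V_{k,\ell}^{(s)}, P_{k,\ell}^{(s)}, O_{k,\ell}^{(s)}$ by writing the lowest monomial explicitly and matching it against $\iota$ of the highest monomial, the reflection $s\mapsto 8-s$ being exactly what aligns the spectral parameters. Second, one must track the differing $\mathfrak{sl}_2$-spacings through the reflection, in particular the long node $3$, whose $A_{3,a}$ involves the shifts $q^{\pm 2}$; this is the only point where the $C_3$ data, rather than a simply-laced analogue, really enters. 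As all of this parallels the proof of Lemma \ref{involution II}, which itself follows Lemma 7.3 of \cite{LM12}, I would present it by recording the required modifications rather than repeating the computation in full.
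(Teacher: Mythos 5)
Your proposal is correct and takes essentially the same route as the paper, whose entire proof is a pointer to Lemma 7.3 of \cite{LM12}: that argument is precisely the transport you describe, namely $\iota(A_{i,aq^s})=A_{i,aq^{8-s}}^{-1}$, the special/anti-special dichotomy from Theorems \ref{special already known}, \ref{special in the system III}, \ref{anti-special in the system IV}, compatibility of $\iota$ with the $U_{q_i}(\hat{\mathfrak{sl}}_2)$-restrictions, and the identification of $\iota(m_+)$ with the unique anti-dominant monomial of $\chi_q(\bar{m_+})$. The only nitpick is that the reflection closes up on the $A_{i,a}$ for any choice of centre; the specific value $8=r^{\vee}h^{\vee}$ is needed only for that final identification of endpoints, not for the first step.
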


\begin{proof}
The lemma is proved by using similar arguments of the proof of Lemma 7.3 of \cite{LM12}.
\end{proof}

The modules $\mathcal{\bar{\tilde{T}}}_{k, \ell, 0}^{(s)}, \mathcal{\bar{U}}_{k, \ell}^{(s)}, \mathcal{\bar{V}}_{k, \ell}^{(s)}, \mathcal{\bar{P}}_{k, \ell}^{(s)}, \mathcal{\bar{O}}_{k, \ell}^{(s)}$ satisfy the same relations as in Theorem \ref{the system III} but the roles of left and right modules are exchanged. More precisely, we have the following theorem.
\begin{theorem} \label{the system IV}
For $s\in \mathbb{Z}$, $k, \ell \in \mathbb{Z}_{\geq 1}$, $p \in \mathbb{Z}_{\geq 2}$, we have the following relations in $\rep(U_q \hat{\mathfrak{g}})$.
\begin{align}
 [ \mathcal{\bar{\tilde{T}}}_{k-1, \ell, 0}^{(s+2)} ] [ \mathcal{\bar{\tilde{T}}}_{k, \ell-1, 0}^{(s)} ] =[ \mathcal{\bar{\tilde{T}}}_{k, \ell, 0}^{(s)} ] [ \mathcal{\bar{\tilde{T}}}_{k-1, \ell-1, 0}^{(s+2)} ] + [ \mathcal{\bar{\tilde{T}}}_{\ell-1, 0, 0}^{(s+2k+2)} ] [ \mathcal{\bar{U}}_{k-1, \ell}^{(s+1)} ],  \label{relation 1 in system 4}
\end{align}

\begin{align}
[ \mathcal{\bar{U}}_{r, \ell}^{(s)} ] = [ \mathcal{ \bar{\tilde{T}} }_{0, r, \lfloor \frac{\ell}{2} \rfloor }^{(s-1)} ] [ \mathcal{\bar{T}}_{ \lfloor \frac{\ell+1}{2} \rfloor, 0, 0 }^{(s+2r+1)} ], \quad r=0, 1, \label{relation 2 in system 4}
\end{align}

\begin{align}
[ \mathcal{\bar{U}}_{p-1, \ell}^{(s+2)} ] [ \mathcal{\bar{U}}_{p, \ell-1}^{(s)} ]  =[ \mathcal{\bar{U}}_{p, \ell}^{(s)} ] [ \mathcal{\bar{U}}_{p-1, \ell-1}^{(s+2)} ] +
\begin{cases}
[ \mathcal{\bar{\tilde{T}}}_{p, \ell-1, 0}^{(s+1)} ] [ \mathcal{\bar{T}}_{\lfloor \frac{p}{2} \rfloor + \frac{\ell}{2}, 0, 0}^{(s+2\sigma(p)+1)} ] [ \mathcal{\bar{V}}_{\lfloor \frac{p-1}{2} \rfloor, \frac{\ell}{2}}^{(s+2\sigma(p+1)+1)}], & \text{if } \ell \text{ is even}, \\
[ \mathcal{\bar{\tilde{T}}}_{p, \ell-1, 0}^{(s+1)} ] [ \mathcal{\bar{T}}_{\lfloor \frac{p+1}{2} \rfloor + \frac{\ell-1}{2}, 0, 0}^{(s+2\sigma(p+1)+1)} ] [ \mathcal{\bar{P}}_{\lfloor \frac{p-2}{2} \rfloor, \frac{\ell-1}{2}}^{(s+2\sigma(p)+1)}], & \text{if } \ell \text{ is odd},
\end{cases}    \label{relation 3 in system 4}
\end{align}

\begin{align}
 [ \mathcal{\bar{V}}_{k-1, \ell}^{(s+4)} ] [ \mathcal{\bar{V}}_{k, \ell-1}^{(s)} ] =[ \mathcal{\bar{V}}_{k, \ell}^{(s)} ] [ \mathcal{\bar{V}}_{k-1, \ell-1}^{(s+4)} ] +
[ \mathcal{\bar{O}}_{2(k-1), 2(\ell-1)}^{(s+1)} ],       \label{relation 4 in system 4}
\end{align}

\begin{align}
 [ \mathcal{\bar{O}}_{k-1, \ell}^{(s+2)} ] [ \mathcal{\bar{O}}_{k, \ell-1}^{(s)} ] =[ \mathcal{\bar{O}}_{k, \ell}^{(s)} ] [ \mathcal{\bar{O}}_{k-1, \ell-1}^{(s+2)} ] +
[ \mathcal{\bar{P}}_{\lfloor \frac{k}{2} \rfloor, \lfloor \frac{\ell}{2} \rfloor}^{(s+\sigma(k)+1)} ] [ \mathcal{\bar{V}}_{\lfloor \frac{k+1}{2} \rfloor, \lfloor \frac{\ell+1}{2} \rfloor}^{(s+\sigma(k+1)+1)} ] [ \mathcal{\bar{\tilde{T}}}_{k-1, 0, 0}^{(s+1)} ] [ \mathcal{\bar{\tilde{T}}}_{\ell-1, 0, 0}^{(s+2k+7)} ],        \label{relation 5 in system 4}
\end{align}

\begin{align}
 [ \mathcal{\bar{P}}_{k-1, \ell}^{(s+2)} ] [ \mathcal{\bar{P}}_{k, \ell-1}^{(s)} ] =[ \mathcal{\bar{P}}_{k, \ell}^{(s)} ] [ \mathcal{\bar{P}}_{k-1, \ell-1}^{(s+2)} ] +
[ \mathcal{\bar{O}}_{2k-1, 2\ell-1}^{(s+1)} ].       \label{relation 6 in system 4}
\end{align}
Moreover, the modules corresponding to each summand on the right hand side of the above relations are all irreducible.
\end{theorem}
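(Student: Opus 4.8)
The plan is to deduce Theorem \ref{the system IV} from the already-established relations for the un-barred modules in Theorem \ref{the system III} by transporting them through the ring homomorphism $\iota$ of Lemma \ref{involution IV}, exactly as Theorem \ref{the system II} was obtained from Theorem \ref{the system I} via Lemma \ref{involution II}. Since $\chi_q$ is an injective ring homomorphism, it suffices to establish each identity at the level of $q$-characters in $\mathbb{Z}\mathcal{P}$ and then lift it back to $\rep(U_q\hat{\mathfrak{g}})$.

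Concretely, I would fix one relation of Theorem \ref{the system III}, written as $[\mathcal{L}][\mathcal{R}] = [\mathcal{T}][\mathcal{B}] + [\mathcal{S}]$, and rewrite it as the identity $\chi_q(L)\chi_q(R) = \chi_q(T)\chi_q(B) + \chi_q(S)$ in $\mathbb{Z}\mathcal{P}$, where $S$ is the product of the source monomials. Applying the ring homomorphism $\iota$ preserves this identity, and Lemma \ref{involution IV} replaces each factor $\iota(\chi_q(m_+))$ by $\chi_q(\bar{m}_+)$. Because $\iota$ is multiplicative and the bar operation satisfies $\overline{m_1 m_2} = \bar{m}_1\bar{m}_2$, the image reads $\chi_q(\bar L)\chi_q(\bar R) = \chi_q(\bar T)\chi_q(\bar B) + \chi_q(\bar S)$, and injectivity of $\chi_q$ lifts this to the asserted relation in $\rep(U_q\hat{\mathfrak{g}})$. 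The interchange of the left and right modules on the left-hand side is explained by the fact that $\iota$ sends $aq^{s}$ to $aq^{8-s}$ and hence reverses the ordering of spectral parameters, so the factor carrying the larger shift becomes the one carrying the smaller shift; since $\rep(U_q\hat{\mathfrak{g}})$ is commutative, this is only a cosmetic reordering recorded to emphasize the duality.

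For the irreducibility claims I would transport Theorem \ref{irreducible 3}. That $\mathcal{T}\otimes\mathcal{B}$ and $\mathcal{S}$ are irreducible means $\chi_q(T)\chi_q(B) = \chi_q(TB)$ and that $\chi_q(S)$ is the $q$-character of a single simple module. Applying $\iota$ and using Lemma \ref{involution IV} with multiplicativity of the bar gives $\chi_q(\bar T)\chi_q(\bar B) = \chi_q(\overline{TB}) = \chi_q(\bar T\bar B)$ and the analogue for $\bar S$, so the corresponding barred summands are again simple. This is consistent with Theorem \ref{anti-special in the system IV}, since $\iota$ exchanges dominant and anti-dominant monomials, turning the special modules of system III into the anti-special modules of system IV.

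The main obstacle is not representation-theoretic but a matter of coverage and bookkeeping: one must check that the involution identity $\iota(\chi_q(m_+)) = \chi_q(\bar{m}_+)$ is available for \emph{every} factor occurring in the relations of Theorem \ref{the system III}, including the source factors that are not literally on the list of Lemma \ref{involution IV}, such as the Kirillov--Reshetikhin factors $\mathcal{T}_{\ast,0,0}$ and modules of the form $\tilde{T}_{0,1,k}$. These are handled by reducing to covered cases: $\tilde{T}_{0,1,k} = P_{0,k}$ lies on the list of Lemma \ref{involution IV} through the trivial relations (\ref{trivial relations 4}), while the $\mathcal{T}$-type and Kirillov--Reshetikhin factors are covered by Lemma \ref{involution II}, and the matching of superscripts for the barred modules in Theorem \ref{the system IV} is then automatic because Lemma \ref{involution IV} preserves the superscript of each listed monomial verbatim. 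Once this finite verification is carried out for all six families of relations, the theorem follows.
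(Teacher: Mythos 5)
Your proposal is correct and follows exactly the route the paper takes: its proof of Theorem \ref{the system IV} consists of the single sentence that the result follows from Theorem \ref{the system III}, Theorem \ref{irreducible 3}, and Lemma \ref{involution IV}, which is precisely the transport-by-$\iota$ argument you spell out (including the use of injectivity of $\chi_q$ and multiplicativity of the bar). Your additional bookkeeping check that every source factor is actually covered by an involution lemma is a sensible refinement that the paper leaves implicit.
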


\begin{proof}
The theorem follows from the relations in Theorem \ref{the system III}, Theorem \ref{irreducible 3}, and Lemma \ref{involution IV}.
\end{proof}

Note that the relations for the modules $\mathcal{\bar{O}}_{0, k}^{(s)} = \mathcal{\bar{\tilde{T}}}_{2, k, 0}^{(s+6)}$ are contained in (\ref{relation 1 in system 4}). The modules $\mathcal{\bar{T}}_{0, k, 2}^{(s-1)}$,  $\mathcal{\bar{T}}_{k, 1, 0}^{(s)}$ in (\ref{trivial relations 4}) can be computed by using the relations (\ref{relation 1 in the system II}, $\ell=1$), (\ref{relation 5 in the system II}), (\ref{relation 7 in the system II}, $\ell=0$) in the system II. The modules $\mathcal{T}_{k, 1, 0}^{(-s-4k-2)}$ in (\ref{trivial relations 4}) can be computed by using the relations (\ref{relation 1 in the system I}, $\ell=1$) and (\ref{relation 7 in the system I}, $\ell=0$) in the system I.

Let us use IV to denote the system consisting of the relations in Theorem \ref{the system IV}, the relations (\ref{relation 1 in the system II}, $\ell=1$), (\ref{relation 5 in the system II}), (\ref{relation 7 in the system II}, $\ell=0$) in the system II, the relations (\ref{relation 1 in the system I}, $\ell=1$), (\ref{relation 7 in the system I}, $\ell=0$) in the system I, and the relations (\ref{trivial relations 3}), (\ref{usual T equation 1})-(\ref{usual T equation 3}). Then the system IV is closed.

Since $\mathcal{\bar{\tilde{T}}}_{k, \ell, 0}^{(s)} = \mathcal{T}_{0, \ell, k}^{(-s-2k-2\ell)}$, the system II contains minimal affinizations $\mathcal{T}_{0, \ell, k}^{(-s-2k-2\ell)}$, $k, \ell \in \mathbb{Z}_{\geq 0}$, $s\in \mathbb{Z}$. Using the shift defined in (\ref{shift}), we can obtain minimal affinizations $\mathcal{T}_{0, \ell, k}^{(s)}$ from $\mathcal{T}_{0, \ell, k}^{(-s-2k-2\ell)}$, $k, \ell \in \mathbb{Z}_{\geq 0}$, $s\in \mathbb{Z}$.

The following proposition is similar to Proposition \ref{compute 1}.
\begin{proposition} \label{compute IV}
One can compute the $q$-characters of
\begin{align*}
\mathcal{\bar{\tilde{T}}}_{k, \ell, 0}^{(s)}, \ \mathcal{\bar{U}}_{k, \ell}^{(s)}, \ \mathcal{\bar{V}}_{k, \ell}^{(s)}, \ \mathcal{\bar{P}}_{k, \ell}^{(s)}, \ \mathcal{\bar{O}}_{k, \ell}^{(s)}, \ s\in\mathbb{Z}, \ k, \ell \in  \mathbb{Z}_{\geq 0}, r\in \{1, 2\},
\end{align*}
recursively, from $\chi_q(1_0), \chi_q(2_0), \chi_q(3_0)$ by using the relations in the system IV. $\Box$
\end{proposition}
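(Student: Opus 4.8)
The plan is to follow verbatim the strategy behind Proposition \ref{compute 1}, exploiting that every relation in Theorem \ref{the system IV} except the closed-form relation (\ref{relation 2 in system 4}) has the shape $[\mathcal{L}][\mathcal{R}] = [\mathcal{T}][\mathcal{B}] + [\mathcal{S}]$, so that $[\mathcal{T}][\mathcal{B}] = [\mathcal{L}][\mathcal{R}] - [\mathcal{S}]$. Since $\chi_q$ is an injective ring homomorphism into $\mathbb{Z}\mathcal{P} = \mathbb{Z}[Y_{i,a}^{\pm 1}]$, which is an integral domain, and since $[\mathcal{B}]$ maps to a nonzero element, the class $[\mathcal{T}]$ is recovered as the unique quotient $([\mathcal{L}][\mathcal{R}] - [\mathcal{S}])/[\mathcal{B}]$ whenever the four other classes are already known. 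Thus each such relation is an honest recursion that produces $\chi_q(\mathcal{T})$ from the $q$-characters of strictly smaller modules.

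First I would record that, starting from $\chi_q(1_0)$, $\chi_q(2_0)$, $\chi_q(3_0)$, the shift homomorphism (\ref{shift}) yields $\chi_q(i_s)$ for every $i \in I$, $s \in \mathbb{Z}$; feeding these into the usual $T$-system relations (\ref{usual T equation 1})--(\ref{usual T equation 3}) then produces the $q$-characters of all Kirillov--Reshetikhin modules $\mathcal{T}_{k,0,0}^{(s)}$, $\mathcal{T}_{0,k,0}^{(s)}$, $\mathcal{T}_{0,0,k}^{(s)}$ and, via the bar-involution, their conjugates. Next I would dispose of the boundary members of the system: the trivial relations (\ref{trivial relations 4}) identify the $\ell=0$ and $k=0$ instances of $\mathcal{\bar{U}}, \mathcal{\bar{V}}, \mathcal{\bar{P}}, \mathcal{\bar{O}}$ with modules already treated in the systems I and II, whose $q$-characters are computable by Propositions \ref{compute 1} and \ref{compute II}. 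Together with the Kirillov--Reshetikhin conjugates $\mathcal{\bar{\tilde{T}}}_{k,0,0}^{(s)}$, $\mathcal{\bar{\tilde{T}}}_{0,\ell,0}^{(s)}$, these furnish the base cases of the induction.

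The inductive step would run a single well-founded induction on a complexity measure attached to each module (for instance a lexicographic order refining the index sum $k+\ell$ across the five module types $\mathcal{\bar{\tilde{T}}}, \mathcal{\bar{U}}, \mathcal{\bar{V}}, \mathcal{\bar{P}}, \mathcal{\bar{O}}$), solving relation (\ref{relation 1 in system 4}) for $\mathcal{\bar{\tilde{T}}}_{k,\ell,0}^{(s)}$, relation (\ref{relation 3 in system 4}) for $\mathcal{\bar{U}}_{p,\ell}^{(s)}$, relation (\ref{relation 4 in system 4}) for $\mathcal{\bar{V}}_{k,\ell}^{(s)}$, relation (\ref{relation 5 in system 4}) for $\mathcal{\bar{O}}_{k,\ell}^{(s)}$, and relation (\ref{relation 6 in system 4}) for $\mathcal{\bar{P}}_{k,\ell}^{(s)}$; the residual $\mathcal{\bar{U}}_{0,\ell}^{(s)}$ and $\mathcal{\bar{U}}_{1,\ell}^{(s)}$ are given outright by the closed form (\ref{relation 2 in system 4}). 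In each case one checks that every module occurring on the right is strictly smaller in the chosen order, so that the recursion terminates and exhausts the listed families.

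The main obstacle is exactly the bookkeeping in this last step: the five families are coupled, since the source of the $\mathcal{\bar{V}}$-recursion (\ref{relation 4 in system 4}) is an $\mathcal{\bar{O}}$-module, while the source of the $\mathcal{\bar{O}}$-recursion (\ref{relation 5 in system 4}) involves $\mathcal{\bar{P}}$ and $\mathcal{\bar{V}}$, and the source of the $\mathcal{\bar{P}}$-recursion (\ref{relation 6 in system 4}) is again an $\mathcal{\bar{O}}$-module with indices $2k-1, 2\ell-1$. The complexity measure must therefore be designed so that the halvings $\lfloor k/2 \rfloor, \lfloor \ell/2 \rfloor$ appearing in (\ref{relation 5 in system 4}) compensate the doublings in (\ref{relation 6 in system 4}) and every cross-reference strictly decreases, ruling out circular dependence; this is the only nontrivial point and it is routine once the measure is written down. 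A clean way to bypass the bookkeeping altogether is to invoke duality: by Lemma \ref{involution IV} the map $\iota$ is a ring homomorphism with $\chi_q(\bar m_+) = \iota(\chi_q(m_+))$, so applying the automorphism $\iota$ to the recursion of Proposition \ref{compute 3} transports it verbatim into a recursion for the system IV, the only remaining observation being that each seed $\iota(\chi_q(i_0))$ is obtained from $\chi_q(i_0)$ by the explicit substitution $Y_{i,aq^s}\mapsto Y_{i,aq^{8-s}}^{-1}$ and is therefore computable from $\chi_q(1_0), \chi_q(2_0), \chi_q(3_0)$.
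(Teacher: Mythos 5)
Your proposal is correct and takes essentially the same route as the paper, which proves this proposition only by asserting that it is "similar to Proposition \ref{compute 1}" — i.e., the same recursive scheme run on the relations of the system IV with the boundary cases supplied by the trivial relations and the systems I and II. Your closing observation, that one may simply apply the involution $\iota$ of Lemma \ref{involution IV} to the entire recursion of Proposition \ref{compute 3}, is a clean and valid way to make the omitted bookkeeping precise, since $\iota$ is a ring automorphism of $\mathbb{Z}\mathcal{P}$ carrying the system III relations to the system IV relations and the seeds to computable substitutions of $\chi_q(1_0), \chi_q(2_0), \chi_q(3_0)$.
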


\section{Special modules} \label{prove special}
In this section, we prove Theorem \ref{special} and Theorem \ref{special in the system III}. Namely, we will prove that for $s\in \mathbb{Z}$, $k, \ell, m \in \mathbb{Z}_{\geq 0}$, $r \in \{0, 1, 2\}$, the modules
\begin{equation}
\begin{aligned}
& \mathcal{T}_{k, \ell, 0}^{(s)}, \ \mathcal{T}_{k, 0, m}^{(s)}, \ \mathcal{\tilde{T}}_{k, 0, m}^{(s)}, \ \mathcal{T}_{0, \ell, r}^{(s)}, \ \mathcal{S}_{k, \ell}^{(s)}, \ \mathcal{R}_{k, 2\ell, \ell}^{(s)}, \\
& \mathcal{R}_{k, 2\ell+1, \ell}^{(s)}, \ \mathcal{R}_{k, 2\ell+2, \ell}^{(s)}, \ \mathcal{U}_{k, \ell}^{(s)}, \ \mathcal{V}_{k, \ell}^{(s)}, \ \mathcal{P}_{k, \ell}^{(s)}, \ \mathcal{O}_{k, \ell}^{(s)},
\end{aligned}   \label{need to prove to be special}
\end{equation}
are special. Since the modules
\begin{align*}
& \mathcal{T}_{k, 0, 0}^{(s)}, \ \mathcal{T}_{0, k, 0}^{(s)}, \ \mathcal{T}_{0, 0, k}^{(s)}, \ \mathcal{\tilde{T}}_{k, 0, 0}^{(s)}, \ \mathcal{\tilde{T}}_{0, 0, k}^{(s)}, \ \mathcal{S}_{0, k}^{(s)}, \\
& \mathcal{S}_{k, 0}^{(s)}, \ \mathcal{R}_{k, 0, 0}^{(s)}, \ \mathcal{R}_{0, 1, 0}^{(s)}, \ \mathcal{R}_{0, 2, 0}^{(s)}, \ U_{k, 0}^{(s)}, \ V_{k, 0}^{(s)}, \ V_{0, k}^{(s)},
\end{align*}
are Kirillov-Reshetikhin modules, they are special. In the following, we will prove that the other modules in (\ref{need to prove to be special}) are special. Without loss of generality, we may assume that $s=0$ in $\mathcal{T}^{(s)}$, where $\mathcal{T}$ is a module in \ref{need to prove to be special}.

\subsection{The case of $\mathcal{T}_{k, \ell, 0}^{(0)}$} \label{The case of Tkl0}

Let $m_+=T_{k, \ell, 0}^{(0)}$. Then
\begin{align*}
m_+=(3_{0}3_{4}\cdots 3_{4k-4})(2_{4k+1}2_{4k+3}\cdots 2_{4k+2\ell-1}).
\end{align*}

Let
$$U=I \times \{aq^s  : s \in \mathbb{Z},   s \leq 4k+2\ell-1 \}. $$
Since all monomials in $\mathscr{M}(\chi_q(m_+)-\text{trunc}_{m_+ \mathcal{Q}_{U}^{-}} \: \chi_q(m_+))$ are right-negative, it is sufficient to show that $\text{trunc}_{m_+ \mathcal{Q}_{U}^{-}} \: \chi_q(m_+)$ is special.

Let
\begin{align*}
\mathcal{M} = \{m_+\prod_{j=0}^{s-1} A_{3, 4k-4j-2}^{-1}: 0 \leq s \leq k-1\}.
\end{align*}

It is easy to see that $\mathcal{M}$ satisfies the conditions in Theorem \ref{truncated}. Therefore
\begin{align*}
\text{trunc}_{m_+ \mathcal{Q}_{U}^{-}} \: \chi_q(m_+)=\sum_{m\in \mathcal{M}} m
\end{align*}
and hence $\text{trunc}_{m_+ \mathcal{Q}_{U}^{-}} \: \chi_q(m_+)$ is special.

\subsection{Some other cases in \ref{need to prove to be special}} \label{table}
The special property of many modules in \ref{need to prove to be special} is proved using the same arguments as  Section \ref{The case of Tkl0} by using Theorem \ref{truncated} and appropriate $U$ and $\mathcal{M}$. We list the modules and corresponding $m_+$, $U$, and $\mathcal{M}$ in the following table. 

{\tiny
\begin{table}[h]
\begin{tabular}{|c|c|c|c|}
\hline %
module & $m_{+}$ & $U$ & monomials in $\mathcal{M}$ \\ 
\hline %
$\mathcal{T}_{1, 0, m}^{(0)}$ & $ 3_0 \left( \prod_{j=0}^{m-1} 1_{4k+2j+2} \right)$ & $I \times \{aq^s : s \in \mathbb{Z}, s \leq 2m+4 \}$ & $\substack{m_0=m_+, m_1=m_0A_{3,2}^{-1}, \\ m_2=m_1A_{2,4}^{-1}, m_3=m_2A_{2,2}^{-1}, \\ 
m_4=m_3A_{3,4}^{-1}}$ \\
\hline %
$\tilde{\mathcal{T}}_{1, 0, m}^{(0)}$ & $ 1_0 \left( \prod_{j=0}^{m-1} 3_{4j+6} \right)$ & $I \times \{aq^s : s \in \mathbb{Z}, s \leq 4m+2 \}$ & $\substack{m_0=m_+,  m_1=m_0A_{1,1}^{-1}, \\ m_2=m_1A_{2,2}^{-1}}$ \\
\hline %
$\mathcal{T}_{0, \ell, 1}^{(0)}$ & $ \left( \prod_{j=0}^{k-1} 2_{2j+1} \right)1_{2k+2}$ & $I \times \{aq^s : s \in \mathbb{Z}, s \leq 2k+2 \}$ & $m_+ \prod_{j=0}^{s-1} A_{2, 2(k-j)}, 0 \leq s \leq k$ \\
\hline %
$\mathcal{T}_{0, \ell, 2}^{(0)}$  & $\left( \prod_{j=0}^{k-1} 2_{2j+1} \right) 1_{2k+2} 1_{2k+4}$ & $I \times \{aq^s : s \in \mathbb{Z}, s \leq 2k+4 \}$ & $ \substack{ m_+ \prod_{j=0}^{s-1} A_{2, 2(k-j)}, 0 \leq s \leq k,  \\ n_1 = m_+ A_{2, 2k}^{-1} A_{3, 2k+2}^{-1}, \\ n_1 \prod_{j=1}^{s_1-1} A_{2, 2(k-j)}, 2\leq s_1 \leq k }$ \\ 
\hline %
$\mathcal{S}_{1, \ell}^{(0)}$  & $2_0 \left( \prod_{j=0}^{\ell-1} 2_{2j+6} \right) $ & $I \times \{aq^s : s \in \mathbb{Z}, s \leq 2\ell+4 \}$ & $ \substack{ m_0=m_+, \\ m_1=m_0 A_{2,2}^{-1}, m_2=m_1A_{1,2}^{-1} \\ m_3 = m_1 A_{3,3}^{-1}, m_4 = m_3A_{1,2}^{-1} }$ \\ 
\hline %
$\mathcal{R}_{k, 2\ell, \ell}^{(0)}$  & $\substack{\left( \prod_{j=0}^{k-1} 2_{2j} \right) \times \\ \times \left( \prod_{j=0}^{2\ell-1} 1_{2k+2j+1} \right) \left( \prod_{j=0}^{\ell-1} 3_{2k+4j+3} \right) }$ & $I \times \{aq^s : s \in \mathbb{Z}, s \leq 2k+4\ell-1 \}$ & $ m_+ \prod_{j=0}^{s-1} A_{2,2k-2j-1}^{-1}, 0 \leq s \leq k-1 $ \\ 
\hline %
$\mathcal{R}_{k, 2\ell+1, \ell}^{(0)}$  & $\substack{\left( \prod_{j=0}^{k-1} 2_{2j} \right)  \times \\ \times \left( \prod_{j=0}^{2\ell} 1_{2k+2j+1} \right) \left( \prod_{j=0}^{\ell-1} 3_{2k+4j+3} \right) }$ & $I \times \{aq^s : s \in \mathbb{Z}, s \leq 2k+4\ell+1 \}$ & $ m_+ \prod_{j=0}^{s-1} A_{2,2k-2j-1}^{-1}, 0 \leq s \leq k-1 $ \\ 
\hline %
$\mathcal{R}_{k, 2\ell+2, \ell}^{(0)}$  & $\substack{\left( \prod_{j=0}^{k-1} 2_{2j} \right) \times \\ \times \left( \prod_{j=0}^{2\ell+1} 1_{2k+2j+1} \right) \left( \prod_{j=0}^{\ell-1} 3_{2k+4j+3} \right) }$ & $I \times \{aq^s : s \in \mathbb{Z}, s \leq 2k+4\ell+3 \}$ & $ \substack{ m_+, \\ m_s = m_+ \prod_{j=1}^s A_{2,2(k-s)+1}^{-1}, 1 \leq s \leq k \\ m_s \prod_{t=0}^{j-1} A_{3, 2k+4\ell - 4t +1}^{-1}, 1\leq j \leq \ell } $ \\ 
\hline %
$\mathcal{R}_{0, 2\ell, \ell}^{(0)}$  & $  \left( \prod_{j=0}^{2\ell-1} 1_{2j+1} \right) \left( \prod_{j=0}^{\ell-1} 3_{4j+3} \right) $ & $I \times \{aq^s : s \in \mathbb{Z}, s \leq 4\ell-1 \}$ & $ m_+ $ \\ 
\hline %
$\mathcal{R}_{0, 2\ell+1, \ell}^{(0)}$  & $ \left( \prod_{j=0}^{2\ell} 1_{ 2j+1} \right) \left( \prod_{j=0}^{\ell-1} 3_{ 4j+3} \right) $ & $I \times \{aq^s : s \in \mathbb{Z}, s \leq  4\ell+1 \}$ & $ m_+ $ \\ 
\hline %
$\mathcal{R}_{0, 2\ell+2, \ell}^{(0)}$  & $ \left( \prod_{j=0}^{2\ell+1} 1_{ 2j+1} \right) \left( \prod_{j=0}^{\ell-1} 3_{ 4j+3} \right) $ & $I \times \{aq^s : s \in \mathbb{Z}, s \leq 4\ell+3 \}$ & $ m_{+} \prod_{t=0}^{j-1} A_{3,  4\ell - 4t +1}^{-1}, 0\leq j \leq \ell $ \\ 
\hline %
$\mathcal{U}_{k, \ell}^{(0)}$  & $ \left( \prod_{j=0}^{k-1} 2_{ 2j} \right) \left( \prod_{j=0}^{\ell-1} 3_{ 2k+2j+1} \right) $ & $I \times \{aq^s : s \in \mathbb{Z}, s \leq 2k+2\ell-1 \}$ & $ \substack{ m_s = m_+ \prod_{j=1}^s A_{2,2(k-s)+1}^{-1}, 0\leq s \leq k, \\ m_{js} = m_s \prod_{t=0}^{j-1} A_{1,2(k-j)}^{-1}, 1\leq j \leq s } $ \\ 
\hline %
$\mathcal{V}_{1, \ell}^{(0)}$  & $ 3_0 \left( \prod_{j=0}^{\ell-1} 3_{ 4j+6} \right) $ & $I \times \{aq^s : s \in \mathbb{Z}, s \leq 4\ell+2 \}$ & $ \substack{ m_0=m_+, m_1 = m_0 A_{3,2}^{-1},  \\ 
m_2 = m_1 A_{2,4}^{-1}, m_3 = m_2 A_{2,2}^{-1}, \\
m_4 = m_2 A_{1,5}^{-1}, m_5 = m_4 A_{2,2}^{-1}, \\
m_6 = m_5 A_{1,3}^{-1} } $ \\ 
\hline %
$\mathcal{P}_{0, \ell}^{(0)}$  & $ 2_1 \left( \prod_{j=0}^{\ell-1} 3_{ 4j+6} \right) $ & $I \times \{aq^s : s \in \mathbb{Z}, s \leq 4\ell+2 \}$ & $ \substack{ m_0 = m_+,  m_1 = m_0 A_{2,2}^{-1}, \\ m_2 = m_1 A_{1,3}^{-1} } $ \\ 
\hline %
\end{tabular}
\caption{some modules which are special}
\label{modules which are special}
\end{table}
}
By using the same arguments in Section \ref{The case of Tkl0}, we can prove that the modules in the above table are special.

\subsection{The case of $\mathcal{T}_{k, 0, m}^{(0)}$}

Let $m_+=T_{k, 0, m}^{(0)}$. Then
\begin{align*}
m_+=(3_{0}3_{4}\cdots 3_{4k-4})(1_{4k+2}1_{4k+4}\cdots 1_{4k+2\ell}).
\end{align*}

If $k=1$, then $\mathcal{T}_{k, 0, m}^{(0)}$ is special by the result of Section \ref{table}.

\textbf{Case 2.} $k>1$. We embed $L(m_+)$ into two different tensor products. Since each factor in the tensor product is special, we can use the FM algorithm to compute the $q$-characters of the factors. We classify the dominant monomials in the first tensor product and prove that the only dominant monomial in the first tensor product which occurs in the second tensor product is $m_+$. Hence $L(m_+)$ is special.

The first tensor product is $L(m'_1) \otimes L(m'_2)$, where
\begin{align*}
m'_1=3_{0}3_{4}\cdots 3_{4k-8}, \ m'_2=3_{4k-4}1_{4k+2}1_{4k+4}\cdots 1_{4k+2\ell}.
\end{align*}

In Case 1, we have shown that $L(m'_2)$ is special. Therefore the FM algorithm works for $L(m'_2)$. We will use the FM algorithm to compute $\chi_q(L(m'_1)),  \chi_q(L(m'_2))$ and classify all dominant monomials in $\chi_q(L(m'_1))  \chi_q(L(m'_2))$. Let $m=m_1m_2$ be a dominant monomial, where $m_i \in \mathscr{M}(L(m'_i))$, $i=1, 2$.

Suppose that $m_2 \neq m'_2$. If $m_2$ is right-negative, then $m$ is a right negative monomial and therefore $m$ is not dominant. This is a contradiction. Hence $m_2$ is not right-negative. By Case 1, $m_2$ is one of the following monomials
\begin{align*}
& \bar{m}_1=m'_2 A_{3, 4k-2}^{-1} = 3_{4k}^{-1} 2_{4k-3} 2_{4k-1} 1_{4k+2}1_{4k+4}\cdots 1_{4k+2\ell}, \\
& \bar{m}_2=\bar{m}_1A_{2, 4k}^{-1} = 2_{4k-3} 2_{4k+1}^{-1} 1_{4k} 1_{4k+2}1_{4k+4}\cdots 1_{4k+2\ell}, \\
& \bar{m}_{3}=\bar{m}_{2}A_{2, 4k-2}^{-1} = 2_{4k-1}^{-1} 2_{4k+1}^{-1} 3_{4k-2} 1_{4k-2} 1_{4k} 1_{4k+2}1_{4k+4}\cdots 1_{4k+2\ell}, \\
& \bar{m}_{4}=\bar{m}_{3}A_{3, 4k}^{-1} = 3_{4k+2}^{-1} 1_{4k-2} 1_{4k} 1_{4k+2}1_{4k+4}\cdots 1_{4k+2\ell}.
\end{align*}
By Lemma \ref{fundamental q-characters}, $3_{4k}^{-1}$ cannot be canceled by any monomial in $\chi_q(m'_1)$. Therefore $m=m_1m_2$ $(m_1 \in \chi_q(m'_1))$ is not dominant. This is a contradiction. Hence $m_2 \neq \bar{m}_1$. Similarly, $m_2$ cannot be $\bar{m}_i, i=2, 3, 4$. This is a contradiction. Therefore $m_2=m'_2$.

If $m_1 \neq m'_1$, then $m_1$ is right negative. Since $m$ is dominant, each factor with a negative power in $m_1$ needs to be canceled by a factor in $m'_2$. By Lemma \ref{fundamental q-characters}, the only factor in $m'_2$ which can be canceled is $3_{4k-4}$. We have $\mathcal{M}(L(m'_1)) \subset \mathcal{M}(\chi_q(3_{0}3_{4}\cdots 3_{4k-12})) \chi_q(L(3_{4k-8})))$. Only monomials in $\chi_q(L(3_{4k-8}))$ can cancel $3_{4k-4}$. The only monomial in  $\chi_q(L(3_{4k-8}))$ which can cancel $3_{4k-4}$ is $3_{4k-4}^{-1}2_{4k-7}2_{4k-5}$. Therefore
$m_1$ is in the set
\begin{align*}
\mathcal{M}(\chi_q(3_{0}3_{4}\cdots 3_{4k-12}))3_{4k-4}^{-1}2_{4k-7}2_{4k-5}.
\end{align*}

If $m_1 = ( 3_{0}3_{4}\cdots 3_{4k-12} ) 3_{4k-4}^{-1}2_{4k-7}2_{4k-5}$, then
\begin{align}
m=m_1m_2=3_{0}3_{4}\cdots 3_{4k-12} 2_{4k-7}2_{4k-5} 1_{4k+2}1_{4k+4}\cdots 1_{4k+2\ell} \label{the other dominant monomial}
\end{align}
is dominant. Suppose that
\begin{align*}
m_1 \neq ( 3_{0}3_{4}\cdots 3_{4k-12} ) 3_{4k-4}^{-1}2_{4k-7}2_{4k-5}.
\end{align*}
Then $m_1=n_1 3_{4k-4}^{-1}2_{4k-7}2_{4k-5}$, where $n_1$ is a non-highest monomial in $\chi_q(3_{0}3_{4}\cdots 3_{4k-12})$. Since $n_1$ is right negative, $2_{4k-7}$ or $2_{4k-5}$ should cancel a factor of $n_1$ with a negative power. It is easy to see that there exists either a factor $2_{4k-9}^2$ or $2_{4k-7}^2$ in a monomial in $\chi_q(3_{0}3_{4}\cdots 3_{4k-12})3_{4k-4}^{-1}2_{4k-7}2_{4k-5}$ by using the FM algorithm. Therefore we need a factor $2_{4k-9}$ or $2_{4k-7}$ in a monomial in $\chi_q(3_{0}3_{4}\cdots 3_{4k-12})$. We have
\begin{align*}
\chi_q(3_{0}3_{4}\cdots 3_{4k-12})3_{4k-4}^{-1}2_{4k-7}2_{4k-5} \subseteq \chi_q(3_{0}3_{4}\cdots 3_{4k-16})\chi_q(3_{4k-12})3_{4k-4}^{-1}2_{4k-7}2_{4k-5}.
\end{align*}
The factors $2_{4k-9}$ and $2_{4k-7}$ can only come from the monomials in $\chi_q(3_{4k-12})$. By Lemma \ref{fundamental q-characters}, any monomial in $\chi_q(3_{4k-12})$ does not have a factor $2_{4k-9}$. The only monomial in $\chi_q(3_{4k-12})$ which contains a factor $2_{4k-7}$ is $1_{4k-10}1_{4k-6}^{-1} 2_{4k-7} 3_{4k-6}^{-1}$. Therefore $m_1$ is in the set
\begin{eqnarray*}
&& \mathcal{M}(\chi_q(3_{0}3_{4}\cdots 3_{4k-16}))1_{4k-10}1_{4k-6}^{-1} 2_{4k-7} 3_{4k-6}^{-1} 3_{4k-4}^{-1}2_{4k-7}2_{4k-5} \\
& = & \mathcal{M}(\chi_q(3_{0}3_{4}\cdots 3_{4k-16}))1_{4k-10}1_{4k-6}^{-1} 3_{4k-6}^{-1} 3_{4k-4}^{-1} 2_{4k-7}^2 2_{4k-5}.
\end{eqnarray*}
Since $m=m_1m_2$ is dominant, $1_{4k-6}^{-1} 3_{4k-6}^{-1}$ should be canceled by some monomial in $\chi_q(3_{0}3_{4}\cdots 3_{4k-16})$. But by Lemma \ref{fundamental q-characters}, $1_{4k-6}^{-1} 3_{4k-6}^{-1}$ cannot be canceled by any monomial in $\chi_q(3_{0}3_{4}\cdots 3_{4k-16})$. This is a contradiction.

Therefore the only dominant monomials in $\chi_q(L(m'_1))  \chi_q(L(m'_2))$ are $m_+$ and (\ref{the other dominant monomial}).

The second tensor product is $L(m''_1) \otimes L(m''_2)$, where
\begin{align*}
m''_1=3_{0}3_{4}\cdots 3_{4k-4}, \ m''_2=1_{4k+2}1_{4k+4}\cdots 1_{4k+2\ell}.
\end{align*}

The monomial (\ref{the other dominant monomial}) is
\begin{align}
n=m_+ A_{3, 4k-2}^{-1}. \label{expression of n}
\end{align}
Since $A_{i, a}, i\in I, a\in \mathbb{C}^{\times}$ are algebraically independent, the expression (\ref{expression of n}) of $n$ of the form $m_+\prod_{i\in I, a\in \mathbb{C}^{\times}} A_{i, a}^{-v_{i, a}}$, where $v_{i, a}$ are some integers, is unique. Suppose that the monomial $n$ is in $\chi_q(L(m''_1))  \chi_q(L(m''_2))$. Then $n=n_1n_2$, where $n_i \in \mathscr{M}(L(m''_i)), i=1, 2$. By the expression (\ref{expression of n}), we have $n_2=m''_2$ and $n_1=m''_1A_{3, 4k-2}^{-1}$. By the FM algorithm, the monomial $m''_1A_{3, 4k-2}^{-1}$ is not in $\mathscr{M}(L(m''_1))$. This contradicts the fact that $n_1 \in \mathscr{M}(L(m''_1))$. Therefore $n$ is not in $\chi_q(L(m''_1))  \chi_q(L(m''_2))$.

\subsection{The case of $\mathcal{\tilde{T}}_{k, 0, m}^{(0)}$}

Let $m_+=\tilde{T}_{k, 0, m}^{(0)}$ with $k, m \in\mathbb{Z}_{\geq 0}$. Then
\begin{align*}
m_+=(1_{0}1_{2}\cdots 1_{2k-2}) ( 3_{2k+4} 3_{2k+8} \cdots 3_{2k+4m} )
\end{align*}

If $k=1$, then $\mathcal{\tilde{T}}_{1, 0, m}^{(0)}$ is special by the result of Section \ref{table}.

\textbf{Case 2.} $k>1$. Let
\begin{align*}
& m'_1=1_{0}1_{2}\cdots 1_{2k-4}, \ m'_2=1_{2k-2} 3_{2k+4} 3_{2k+8} \cdots 3_{2k+4m}, \\
& m''_1=1_{0}1_{2}\cdots 1_{2k-2}, \ m''_2=3_{2k+4} 3_{2k+8} \cdots 3_{2k+4m}.
\end{align*}
Then $\mathscr{M}(L(m_+)) \subset \mathscr{M}(\chi_q(m'_1)\chi_q(m'_2)) \cap \mathscr{M}(\chi_q(m''_1)\chi_q(m''_2))$.

By using similar arguments as the case of $\mathcal{T}_{k, 0, m}^{(s)}$, we show that the only possible dominant monomials in $\chi_q(m'_1)\chi_q(m'_2)$ are $m_+$ and
\begin{eqnarray*}
n_1=1_{0}1_{2}\cdots 1_{2k-6} 2_{2k-3} 3_{2k+4} 3_{2k+8} \cdots 3_{2k+4m} = m_+ A_{1, 2k-3}^{-1}.
\end{eqnarray*}
Moreover, $n_1$ is not in $\chi_q(m''_1)\chi_q(m''_2)$. Therefore the only dominant monomial in $\chi_q(m_+)$ is $m_+$.

\subsection{The case of $\mathcal{S}_{k, \ell}^{(0)}$}

Let $m_+=S_{k, \ell}^{(0)}$ with $k, \ell \in\mathbb{Z}_{\geq 0}$. Then
\begin{align*}
m_+=(2_{0}2_{2}\cdots 2_{2k-2}) ( 2_{2k+4} 2_{2k+6} \cdots 2_{2k+2\ell+2} ).
\end{align*}

If $k=1$, then $\mathcal{S}_{1, \ell}^{(s)}$ is special by the result of Section \ref{table}.

\textbf{Case 2.} $k>1$. Let
\begin{align*}
& m'_1=2_{0}2_{2}\cdots 2_{2k-4}, \ m'_2=2_{2k-2} 2_{2k+4} 2_{2k+6} \cdots 2_{2k+2\ell+2}, \\
& m''_1=2_{0}2_{2}\cdots 2_{2k-2}, \ m''_2=2_{2k+4} 2_{2k+6} \cdots 2_{2k+2\ell+2}.
\end{align*}
Then $\mathscr{M}(L(m_+)) \subset \mathscr{M}(\chi_q(m'_1)\chi_q(m'_2)) \cap \mathscr{M}(\chi_q(m''_1)\chi_q(m''_2))$.

By using similar arguments as the case of $\mathcal{T}_{k, 0, m}^{(s)}$, we show that the only possible dominant monomials in $\chi_q(m'_1)\chi_q(m'_2)$ are $m_+$ and
\begin{eqnarray*}
n_1 & = & 2_{0}2_{2}\cdots 2_{2k-6} 1_{2k-3} 3_{2k-3} 2_{2k+4} 2_{2k+6} \cdots 2_{2k+2\ell+2} = m_+ A_{2, 2k-3}^{-1}, \\
n_2 & = & 2_{0}2_{2}\cdots 2_{2k-6} 2_{2k+4} 2_{2k+8} 2_{2k+10} \cdots 2_{2k+2\ell+2} \\
& = & m_+ A_{2, 2k-3}^{-1} A_{1, 2k-2}^{-1} A_{3, 2k-1}^{-1} A_{2, 2k+1}^{-1} A_{2, 2k-1}^{-1} A_{3, 2k+1}^{-1} A_{1, 2k+2}^{-1} A_{2, 2k+3}^{-1}, \\
n_3 & = & 2_{0}2_{2}\cdots 2_{2k-10} 1_{2k-7} 3_{2k-5} 2_{2k+8} 2_{2k+10} \cdots 2_{2k+2\ell+2} \\
& = & n_2 A_{2, 2k-5}^{-1} A_{2, 2k-7}^{-1} A_{1, 2k-4}^{-1} A_{3, 2k-5}^{-1} A_{2, 2k-3}^{-1}.
\end{eqnarray*}
Moreover, $n_1$, $n_2$, $n_3$ are not in $\chi_q(m''_1)\chi_q(m''_2)$. Therefore the only dominant monomial in $\chi_q(m_+)$ is $m_+$.

\subsection{The case of $\mathcal{U}_{k, \ell}^{(s)}$}

Let $m_+=U_{k, \ell}^{(s)}$ with $k, \ell \in\mathbb{Z}_{\geq 0}$. Without loss of generality, we may assume that $s=0$. Then
\begin{align*}
m_+=(2_{0}2_{2}\cdots 2_{2k-2}) ( 3_{2k+1} 3_{2k+3} \cdots 3_{2k+2\ell-1} ).
\end{align*}

Let
$$
U=I \times \{aq^s :  s\in \mathbb{Z},  s \leq 2k+2\ell-1 \}.
$$

Let $\mathcal{M}$ be the finite set consisting of the following monomials
\begin{align*}
& m_+, \ m_{1}=m_{+}A_{2, 2k-1}^{-1}, \ m_{2}=m_{1} A_{2, 2k-3}^{-1}, \ldots, m_{k}=m_{k-1} A_{2, 1}^{-1}, \\
& m_{11}=m_{1}A_{1, 2k}^{-1}, \\
& m_{12}=m_{2}A_{1, 2k}^{-1}, \ m_{22}=m_{12} A_{1, 2k-2}^{-1}, \\
& \vdots \\
& m_{1k}=m_{k}A_{1, 2k}^{-1}, \ m_{2k}=m_{1k} A_{1, 2k-2}^{-1}, \ldots, m_{kk}=m_{k-1, k} A_{1, 2}^{-1}.
\end{align*}
By Theorem \ref{truncated},
\begin{align*}
\text{trunc}_{m_+ \mathcal{Q}_{U}^{-}} \: \chi_q(m_+)=\sum_{m\in \mathcal{M}} m
\end{align*}
and hence $\text{trunc}_{m_+ \mathcal{Q}_{U}^{-}} \: \chi_q(m_+)$ is special. Therefore $\mathcal{U}_{k, \ell}^{(s)}$ is special.

\subsection{The case of $\mathcal{V}_{k, \ell}^{(s)}$}
Let $m_+=V_{k, \ell}^{(s)}$ with $k, \ell \in\mathbb{Z}_{\geq 0}$. Without loss of generality, we may assume that $s=0$. Then
\begin{align*}
m_+=(3_{0}3_{4}\cdots 3_{4k-4}) ( 3_{4k+2} 3_{4k+6} \cdots 3_{4k+4\ell-2} ).
\end{align*}

\textbf{Case 1.} $k=1$. In this case,
\begin{align*}
m_+ = 3_{0} (3_{6} 3_{10} \cdots 3_{4\ell+2}).
\end{align*}

Let
$$
U=I \times \{aq^s :  s\in \mathbb{Z},  s \leq 4\ell+2 \}.
$$

Let $\mathcal{M}$ be the finite set consisting of the following monomials
\begin{align*}
& m_0=m_+, \ m_1=m_0A_{3, 2}^{-1}, \ m_2=m_1A_{2, 4}^{-1}, \\
& m_3=m_2A_{2, 2}^{-1}, \ m_4=m_2A_{1, 5}^{-1}, \ m_5=m_4A_{2, 2}^{-1}, \ m_6=m_5A_{1, 3}^{-1}.
\end{align*}
By Theorem \ref{truncated},
\begin{align*}
\text{trunc}_{m_+ \mathcal{Q}_{U}^{-}} \: \chi_q(m_+)=\sum_{m\in \mathcal{M}} m
\end{align*}
and hence $\text{trunc}_{m_+ \mathcal{Q}_{U}^{-}} \: \chi_q(m_+)$ is special. Therefore $\mathcal{V}_{1, \ell}^{(s)}$ is special.

\textbf{Case 2.} $k>1$. Let
\begin{align*}
& m'_1=3_{0}3_{4}\cdots 3_{4k-8}, \ m'_2= 3_{4k-4} 3_{4k+2} 3_{4k+6} \cdots 3_{4k+4\ell-2}, \\
& m''_1=3_{0}3_{4}\cdots 3_{4k-4}, \ m''_2=3_{4k+2} 3_{4k+6} \cdots 3_{4k+4\ell-2}.
\end{align*}
Then $\mathscr{M}(L(m_+)) \subset \mathscr{M}(\chi_q(m'_1)\chi_q(m'_2)) \cap \mathscr{M}(\chi_q(m''_1)\chi_q(m''_2))$.

By using similar arguments as the case of $\mathcal{T}_{k, 0, m}^{(s)}$, we show that the only possible dominant monomials in $\chi_q(m'_1)\chi_q(m'_2)$ are $m_+$ and
\begin{eqnarray*}
n_1=(3_{0}3_{4}\cdots 3_{4k-12}) 2_{4k-7} 2_{4k-5} (3_{4k+2} 3_{4k+6} \cdots 3_{4k+4\ell-2})=m_+ A_{3, 4k-6}^{-1}.
\end{eqnarray*}
Moreover, $n_1$ is not in $\chi_q(m''_1)\chi_q(m''_2)$. Therefore the only dominant monomial in $\chi_q(m_+)$ is $m_+$.

\subsection{The case of $\mathcal{P}_{k, \ell}^{(s)}$}
Let $m_+=P_{k, \ell}^{(s)}$ with $k, \ell \in\mathbb{Z}_{\geq 0}$. Without loss of generality, we may assume that $s=0$. Then
\begin{align*}
m_+=(3_{0}3_{4}\cdots 3_{4k-4}) 2_{4k+1} ( 3_{4k+6} 3_{4k+10} \cdots 3_{4k+4\ell+2} ).
\end{align*}

\textbf{Case 1.} $k=0$. In this case,
\begin{align*}
m_+ = 2_{1} (3_{6} 3_{10} \cdots 3_{4\ell+2}).
\end{align*}

Let
$$
U=I \times \{aq^s :  s\in \mathbb{Z},  s \leq 4\ell+2 \}.
$$

Let $\mathcal{M}$ be the finite set consisting of the following monomials
\begin{align*}
& m_0=m_+, \ m_1=m_0A_{2, 2}^{-1}, \ m_2=m_1A_{1, 3}^{-1}.
\end{align*}
It is clear that $\mathcal{M}$ satisfies the conditions in Theorem \ref{truncated}. Therefore
\begin{align*}
\text{trunc}_{m_+ \mathcal{Q}_{U}^{-}} \: \chi_q(m_+)=\sum_{m\in \mathcal{M}} m
\end{align*}
and hence $\text{trunc}_{m_+ \mathcal{Q}_{U}^{-}} \: \chi_q(m_+)$ is special. Therefore $\mathcal{P}_{0, \ell}^{(s)}$ is special.

\textbf{Case 2.} $k>0$. Let
\begin{align*}
& m'_1=3_{0}3_{4}\cdots 3_{4k-4}, \ m'_2= 2_{4k+1} 3_{4k+6} 3_{4k+10} \cdots 3_{4k+4\ell+2}, \\
& m''_1=3_{0}3_{4}\cdots 3_{4k-4} 2_{4k+1}, \ m''_2= 3_{4k+6} 3_{4k+10} \cdots 3_{4k+4\ell+2}.
\end{align*}
Then $\mathscr{M}(L(m_+)) \subset \mathscr{M}(\chi_q(m'_1)\chi_q(m'_2)) \cap \mathscr{M}(\chi_q(m''_1)\chi_q(m''_2))$. Note that we have shown that $m''_2=T_{k, 1, 0}^{(0)}$ is special in the case of $T_{k, \ell, 0}^{(s)}$. Therefore the FM algorithm applies to $m''_2$.

By using similar arguments as the case of $\mathcal{T}_{k, 0, m}^{(s)}$, we show that the only possible dominant monomials in $\chi_q(m'_1)\chi_q(m'_2)$ are $m_+$ and
\begin{eqnarray*}
n_1=(3_{0}3_{4}\cdots 3_{4k-8}) 2_{4k-3} 1_{4k} (3_{4k+6} 3_{4k+10} \cdots 3_{4k+4\ell+2}) = m_+ A_{3, 4k-2}^{-1} A_{2, 4k}^{-1}.
\end{eqnarray*}
Moreover, $n_1$ is not in $\chi_q(m''_1)\chi_q(m''_2)$. Therefore the only dominant monomial in $\chi_q(m_+)$ is $m_+$.

\subsection{The case of $\mathcal{O}_{k, \ell}^{(s)}$}
Let $m_+=O_{k, \ell}^{(s)}$ with $k, \ell \in\mathbb{Z}_{\geq 0}$. Without loss of generality, we may assume that $s=0$. Then
\begin{align*}
m_+=(2_{0}2_{2}\cdots 2_{2k-2}) 1_{2k+1} 1_{2k+3} ( 2_{2k+6} 2_{2k+8} \cdots 2_{2k+2\ell+4} ).
\end{align*}

\textbf{Case 1.} $k=0$. In this case,
\begin{align*}
m_+ = 1_{1} 1_{3} (2_{6} 2_{8} \cdots 2_{2\ell+4}) = \tilde{T}_{2, \ell, 0}^{(1)}.
\end{align*}
By Theorem \ref{special already known}, $L(m_+)=\mathcal{O}_{0, \ell}^{(s)}$ is special.

\textbf{Case 2.} $k>0$. Let
\begin{align*}
& m'_1=2_{0}2_{2}\cdots 2_{2k-2}, \ m'_2= 1_{2k+1} 1_{2k+3} 2_{2k+6} 2_{2k+8} \cdots 2_{2k+2\ell+4}, \\
& m''_1=2_{0}2_{2}\cdots 2_{2k-2}1_{2k+1}1_{2k+3}, \ m''_2= 2_{2k+6} 2_{2k+8} \cdots 2_{2k+2\ell+4}.
\end{align*}
Then $\mathscr{M}(L(m_+)) \subset \mathscr{M}(\chi_q(m'_1)\chi_q(m'_2)) \cap \mathscr{M}(\chi_q(m''_1)\chi_q(m''_2))$. Note that we have shown that $m''_2=T_{0, k, 2}^{(0)}$ is special in the case of $T_{0, k, r}^{(s)}$, $r\in \{1, 2\}$. Therefore the FM algorithm applies to $m''_2$.

By using similar arguments as the case of $\mathcal{T}_{k, 0, m}^{(s)}$, we show that the only possible dominant monomials in $\chi_q(m'_1)\chi_q(m'_2)$ are $m_+$ and
\begin{eqnarray*}
n_1 & = & (2_{0}2_{2}\cdots 2_{2k-4}) 3_{2k-1} 1_{2k+3} (2_{2k+6} 2_{2k+8} \cdots 2_{2k+2\ell+4}) \\
& = & m_+ A_{2, 2k-1}^{-1} A_{1, 2k}^{-1}, \\
n_2 & = & (2_{0}2_{2}\cdots 2_{2k-4}) 1_{2k+1} 1_{2k+3} (2_{2k+8} 2_{2k+10} \cdots 2_{2k+2\ell+4}) \\
& = & n_1 A_{3, 2k+1}^{-1} A_{2, 2k+3}^{-1} A_{2, 2k+1}^{-1} A_{3, 2k+3}^{-1} A_{1, 2k+4}^{-1} A_{2, 2k+5}^{-1}, \\
n_3 & = & (2_{0}2_{2}\cdots 2_{2k-6}) 1_{2k-3} 1_{2k+1} (2_{2k+8} 2_{2k+10} \cdots 2_{2k+2\ell+4}) \\
& = & n_2 A_{2, 2k-3}^{-1} A_{3, 2k-1}^{-1} A_{2, 2k+1}^{-1} A_{1, 2k+2}^{-1}, \\
n_4 & = & (2_{0}2_{2}\cdots 2_{2k-8}) 1_{2k-5} 1_{2k-3} (2_{2k+8} 2_{2k+10} \cdots 2_{2k+2\ell+4}) \\
& = & n_3 A_{2, 2k-5}^{-1} A_{3, 2k-3}^{-1} A_{2, 2k-1}^{-1} A_{1, 2k}^{-1}.
\end{eqnarray*}
Moreover, $n_1$, $n_2$, $n_3$, $n_4$ are not in $\chi_q(m''_1)\chi_q(m''_2)$. Therefore the only dominant monomial in $\chi_q(m_+)$ is $m_+$.

\section{Proof Theorem \ref{the system I} and Theorem \ref{the system III}} \label{prove system}
In this section, we will prove Theorem \ref{the system I} and Theorem \ref{the system III}. We use the FM algorithm to classify dominant monomials in $\chi_q(\mathcal{L})\chi_q(\mathcal{R})$, $\chi_q(\mathcal{T})\chi_q(\mathcal{B})$, and $\chi_q(\mathcal{S})$.

\subsection{Classification of dominant monomials in $\chi_q(\mathcal{L})\chi_q(\mathcal{R})$ and $\chi_q(\mathcal{T})\chi_q(\mathcal{B})$}
\begin{lemma} \label{dominant monomials}
We have the following cases.
\begin{enumerate}[(1)]
\item Let $$M=T_{k, \ell-1, 0}^{(s)}T_{k-1, \ell}^{(s+4)}, k\geq 1, \ell \geq 1. $$ Then dominant monomials in $\chi_q(T_{k, \ell-1, 0}^{(s)} )  \chi_q( T_{k-1, \ell, 0}^{(s+4)})$ are
\begin{align*}
& M_0=M, \ M_1=MA_{2, s+4k+2\ell-2}^{-1}, \ M_2=M_1A_{2, s+4k+2\ell-4}^{-1}, \ \ldots, \nonumber \\ & M_{\ell-1}=M_{\ell-2}A_{2, s+4k+2}^{-1}, \ M_{\ell}=M_{\ell-1}A_{3, s+4k-2}^{-1}A_{2, s+4k}^{-1}, \nonumber \\ & M_{\ell+1}=M_{\ell}A_{3, s+4k-6}^{-1}, \ M_{\ell+2}=M_{\ell+1}A_{3, s+4k-10}^{-1}, \ \ldots, \ M_{k+\ell-1}=M_{k+\ell-2}A_{3, s+2}^{-1}.
\end{align*}
The dominant monomials in $\chi_q(T_{k, \ell, 0}^{(s)} )  \chi_q( T_{k-1, \ell-1, 0}^{(s+4)})$ are $M_0, \ldots, M_{k+\ell-2}$.

\item Let $$M=T_{k, 0, m-1}^{(s)}T_{k-1, 0, m}^{(s+4)}, k\geq 1, m \geq 1. $$ Then dominant monomials in $\chi_q( T_{k, 0, m-1}^{(s)} ) \chi_q( T_{k-1, 0, m}^{(s+4)} )$ are
\begin{align*}
& M_0=M, \ M_1=MA_{1, s+4k+2m-1}^{-1}, \ M_2=M_1A_{1, s+4k+2m-3}^{-1}, \ \ldots, \nonumber \\ & M_{m-1}=M_{m-2}A_{1, s+4k+3}^{-1}, \ M_{m}=M_{m-1}A_{3, s+4k-2}^{-1}A_{2, s+4k}^{-1}A_{1, s+4k+1}^{-1}, \nonumber \\ & M_{m+1}=M_{m}A_{3, s+4k-6}^{-1}, \ M_{m+2}=M_{m+1}A_{3, s+4k-10}^{-1}, \ \ldots, \ M_{k+m-1}=M_{k+m-2}A_{3, s+2}^{-1}.
\end{align*}
The dominant monomials in $\chi_q(T_{k, 0, m}^{(s)} )  \chi_q( T_{k-1, 0, m-1}^{(s+4)})$ are $M_0, \ldots, M_{k+m-2}$.

\item Let $$M=S_{k, \ell-1}^{(s)}S_{k-1, \ell}^{(s+2)}, k, \ell \geq 1.$$ Then dominant monomials in $\chi_q(S_{k, \ell-1}^{(s)} )  \chi_q( S_{k-1, \ell}^{(s+2)} )$ are
\begin{align*}
& M_0=M, \ M_1=MA_{2, s+2k+2\ell+1}^{-1}, \ M_2=M_1A_{2, s+2k+2\ell-1}^{-1}, \ \ldots,  \nonumber \\ &  M_{\ell-1}=M_{\ell-2}A_{2, s+2k+5}^{-1}, \ M_{\ell}=M_{\ell-1}A_{2, s+2k-1}^{-1}A_{3, s+2k+1}^{-1}A_{2, 2k+3}^{-1}, \ \nonumber \\ &  M_{\ell+1}=M_{\ell}A_{2, s+2k-3}^{-1}, \ M_{\ell+2}=M_{\ell+1}A_{2, s+2k-5}^{-1}, \ \ldots, M_{k+\ell-1}=M_{k+\ell-2}A_{2, s+1}^{-1}.
\end{align*}
The dominant monomials in $\chi_q( S_{k, \ell}^{(s)} ) \chi_q( S_{k-1, \ell-1}^{(s+2)} )$ are $M_0, \ldots, M_{k+\ell-2}$.

\item Let $$M=T_{0, \ell, r-1}^{(s)}T_{0, \ell-1, r}^{(s+2)}, \ell \geq 1, r \in \{1, 2\}. $$ Then dominant monomials in $\chi_q( T_{0, \ell, r-1}^{(s)} ) \chi_q( T_{0, \ell-1, r}^{(s+2)} )$ are
\begin{align*}
& M_0=M, \ M_1=MA_{1, s+2\ell+2r-2}^{-1}, \ M_2=M_1A_{1, s+2\ell+2r-4}^{-1}, \ \ldots, \nonumber \\ & M_{r-1}=M_{r-2}A_{1, s+2\ell+2}^{-1}, \ M_{r}=M_{r-1}A_{2, s+2\ell}^{-1}A_{1, s+2\ell+1}^{-1}, \nonumber \\ & M_{r+1}=M_{r}A_{2, s+2\ell-2}^{-1}, \ M_{r+2}=M_{r+1}A_{2, s+2\ell-4}^{-1}, \ \ldots, \ M_{\ell+r-1}=M_{\ell+r-2}A_{2, s+2}^{-1}.
\end{align*}
The dominant monomials in $\chi_q(T_{0, \ell, r}^{(s)} )  \chi_q( T_{0, \ell-1, r-1}^{(s+2)})$ are $M_0, \ldots, M_{\ell+r-2}$.

\item Let $$M=\tilde{T}_{k, 0, m-1}^{(s)}\tilde{T}_{k-1, 0, m}^{(s+2)}, k, m \geq 1. $$ Then dominant monomials in $\chi_q( \tilde{T}_{k, 0, m-1}^{(s)} ) \chi_q( \tilde{T}_{k-1, 0, m}^{(s+2)} )$ are
\begin{align*}
& M_0=M, \ M_1=MA_{3, s+2k+4m-2}^{-1}, \ M_2=M_1A_{3, s+2k+4m-6}^{-1}, \ \ldots, \nonumber \\ & M_{m-1}=M_{m-2}A_{3, s+2k+6}^{-1}, \ M_{m}=M_{m-1}A_{1, s+2k-1}^{-1}A_{2, s+2k}^{-1}A_{3, 2k+2}^{-1}, \nonumber \\ & M_{m+1}=M_{m}A_{1, s+2k-3}^{-1}, \ M_{m+2}=M_{m+1}A_{1, s+2k-5}^{-1}, \ \ldots, \ M_{k+m-1}=M_{k+m-2}A_{1, s+1}^{-1}.
\end{align*}
The dominant monomials in $\chi_q(\tilde{T}_{k, 0, m}^{(s)} )  \chi_q( \tilde{T}_{k-1, 0, m-1}^{(s+2)})$ are $M_0, \ldots, M_{\ell+m-2}$.

\item Let $$M=R_{k, 2\ell, \ell-1}^{(s)} R_{k-1, 2\ell, \ell}^{(s+2)}, k\geq 1, \ell \geq 1. $$ Then dominant monomials in $\chi_q( R_{k, 2\ell, \ell-1}^{(s)} ) \chi_q( R_{k-1, 2\ell, \ell}^{(s+2)} )$ are
\begin{align*}
& M_0=M, \ M_1=MA_{3, s+2k+4\ell-3}^{-1}, \ M_2=M_1A_{3, s+2k+4\ell-7}^{-1}, \ \ldots, \nonumber \\ & M_{\ell-1}=M_{\ell-2}A_{3, s+2k+5}^{-1}, \ M_{\ell}=M_{\ell-1}A_{2, s+2k-1}^{-1}A_{3, s+2k+1}^{-1}, \nonumber \\ & M_{\ell+1}=M_{\ell}A_{2, s+2k-3}^{-1}, \ M_{\ell+2}=M_{\ell+1}A_{2, s+2k-5}^{-1}, \ \ldots, \ M_{k+\ell-1}=M_{k+\ell-2}A_{2, s+1}^{-1}.
\end{align*}
The dominant monomials in $\chi_q(R_{k, 2\ell, \ell}^{(s)} )  \chi_q( R_{k-1, 2\ell, \ell-1}^{(s+2)})$ are $M_0, \ldots, M_{k+\ell-2}$.

\item Let $$M=R_{k, 2\ell, \ell}^{(s)} R_{k-1, 2\ell+1, \ell}^{(s+2)}, k\geq 1, \ell \geq 1. $$ Then dominant monomials in $\chi_q( R_{k, 2\ell, \ell}^{(s)} ) \chi_q( R_{k-1, 2\ell+1, \ell}^{(s+2)} )$ are
\begin{align*}
& M_0=M, \ M_1=MA_{1, s+2k+4\ell}^{-1}, \ M_2=M_1A_{1, s+2k+4\ell-2}^{-1}, \ \ldots, \nonumber \\ & M_{2\ell}=M_{2\ell-1}A_{1, s+2k+2}^{-1}, \ M_{2\ell+1}=M_{2\ell}A_{2, s+2k-1}^{-1}A_{1, s+2k}^{-1}, \nonumber \\ & M_{2\ell+2}=M_{2\ell+1}A_{2, s+2k-3}^{-1}, \ M_{2\ell+3}=M_{2\ell+2}A_{2, s+2k-5}^{-1}, \ \ldots, \ M_{k+2\ell}=M_{k+2\ell-1}A_{2, s+1}^{-1}.
\end{align*}
The dominant monomials in $\chi_q(R_{k, 2\ell+1, \ell}^{(s)} )  \chi_q( R_{k-1, 2\ell, \ell}^{(s+2)})$ are $M_0, \ldots, M_{k+2\ell-1}$.

\item Let $$M=R_{k, 2\ell+1, \ell}^{(s)} R_{k-1, 2\ell+2, \ell}^{(s+2)}, k\geq 1, \ell \geq 1. $$ Then dominant monomials in $\chi_q( R_{k, 2\ell+1, \ell}^{(s)} ) \chi_q( R_{k-1, 2\ell+2, \ell}^{(s+2)} )$ are
\begin{align*}
& M_0=M, \ M_1=MA_{1, s+2k+4\ell+2}^{-1}, \ M_2=M_1A_{1, s+2k+4\ell}^{-1}, \ \ldots, \nonumber \\ & M_{2\ell+1}=M_{2\ell}A_{1, s+2k+2}^{-1}, \ M_{2\ell+2}=M_{2\ell+1}A_{2, s+2k-1}^{-1}A_{1, s+2k}^{-1}, \nonumber \\ & M_{2\ell+3}=M_{2\ell+2}A_{2, s+2k-3}^{-1}, \ M_{2\ell+4}=M_{2\ell+3}A_{2, s+2k-5}^{-1}, \ \ldots, \ M_{k+2\ell+1}=M_{k+2\ell}A_{2, s+1}^{-1}.
\end{align*}
The dominant monomials in $\chi_q(R_{k, 2\ell+2, \ell}^{(s)} )  \chi_q( R_{k-1, 2\ell+1, \ell}^{(s+2)})$ are $M_0, \ldots, M_{k+2\ell}$.

\item Let $$M=\tilde{T}_{k, \ell-1, 0}^{(s)} \tilde{T}_{k-1, \ell}^{(s+2)}, k\geq 1, \ell \geq 1. $$ Then dominant monomials in $\chi_q(\tilde{T}_{k, \ell-1, 0}^{(s)} )  \chi_q( \tilde{T}_{k-1, \ell, 0}^{(s+2)})$ are
\begin{align*}
& M_0=M, \ M_1=MA_{2, s+2k+2\ell-2}^{-1}, \ M_2=M_1A_{2, s+2k+2\ell-4}^{-1}, \ \ldots, \nonumber \\ & M_{\ell-1}=M_{\ell-2}A_{2, s+2k+2}^{-1}, \ M_{\ell}=M_{\ell-1}A_{1, s+2k-1}^{-1}A_{2, s+2k}^{-1}, \nonumber \\ & M_{\ell+1}=M_{\ell}A_{1, s+2k-3}^{-1}, \ M_{\ell+2}=M_{\ell+1}A_{1, s+2k-5}^{-1}, \ \ldots, \ M_{k+\ell-1}=M_{k+\ell-2}A_{1, s+1}^{-1}.
\end{align*}
The dominant monomials in $\chi_q(\tilde{T}_{k, \ell, 0}^{(s)} )  \chi_q( \tilde{T}_{k-1, \ell-1, 0}^{(s+2)})$ are $M_0, \ldots, M_{k+\ell-2}$.

Recall that $\sigma(x)=0$ if $x$ is even and $\sigma(x)=1$ if $x$ is odd.
\item Let $$M=U_{p, \ell-1}^{(s)} U_{p-1, \ell}^{(s+2)}, p\geq 2, \ell \geq 1. $$ Then dominant monomials in $\chi_q( U_{p, \ell-1}^{(s)} )  \chi_q( U_{p-1, \ell}^{(s+2)} )$ are
\begin{align*}
& M_0=M, \ M_1=MA_{3, s+2p+2\ell-3}^{-1}, \ M_2=M_1A_{2, s+2p+2\ell-7}^{-1}, \ \ldots, \nonumber \\ & M_{ \lfloor \frac{\ell-1}{2} \rfloor }=M_{ \lfloor \frac{\ell-1}{2} \rfloor -1 }A_{3, s+2p+2\sigma(\ell+1)+3}^{-1}, \ M_{ \lfloor \frac{\ell-1}{2} \rfloor + 1 }=M_{ \lfloor \frac{\ell-1}{2} \rfloor }A_{2, s+2p-1}^{-1}A_{3, s+2p+1}^{-1}, \nonumber \\ & M_{ \lfloor \frac{\ell-1}{2} \rfloor + 2}=M_{ \lfloor \frac{\ell-1}{2} \rfloor + 1 }A_{2, s+2p-3}^{-1}, \ M_{\lfloor \frac{\ell-1}{2} \rfloor+3}=M_{\lfloor \frac{\ell-1}{2} \rfloor+2}A_{2, s+2p-5}^{-1}, \ \ldots, \\
& M_{p+\lfloor \frac{\ell-1}{2} \rfloor }=M_{p+\lfloor \frac{\ell-1}{2} \rfloor-1}A_{2, s+1}^{-1}.
\end{align*}
The dominant monomials in $\chi_q( U_{p, \ell}^{(s)} )  \chi_q( U_{p-1, \ell-1}^{(s+2)} )$ are $M_0, \ldots, M_{p+\lfloor \frac{\ell-1}{2} \rfloor-1}$.

\item Let $$M=V_{k, \ell-1}^{(s)} V_{k-1, \ell}^{(s+4)}, k\geq 1, \ell \geq 1. $$ Then dominant monomials in $\chi_q( V_{k, \ell-1}^{(s)} )  \chi_q( V_{k-1, \ell}^{(s+4)} )$ are
\begin{align*}
& M_0=M, \ M_1=MA_{3, s+4k+4\ell-4}^{-1}, \ M_2=M_1A_{3, s+4k+4\ell-8}^{-1}, \ \ldots, \nonumber \\ & M_{\ell-1}=M_{\ell-2}A_{3, s+4k+6}^{-1}, \ M_{\ell}=M_{\ell-1}A_{3, s+4k-2}^{-1}A_{2, s+4k}^{-1} A_{2, s+4k-2}^{-1}A_{3, s+4k}^{-1}, \nonumber \\ & M_{\ell+1}=M_{\ell}A_{3, s+4k-6}^{-1}, \ M_{\ell+2}=M_{\ell+1}A_{3, s+4k-10}^{-1}, \ \ldots, \ M_{k+\ell-1}=M_{k+\ell-2}A_{3, s+2}^{-1}.
\end{align*}
The dominant monomials in $\chi_q( V_{k, \ell}^{(s)} )  \chi_q( V_{k-1, \ell-1}^{(s+4)} )$ are $M_0, \ldots, M_{k+\ell-2}$.

\item Let $$M=P_{k, \ell-1}^{(s)} P_{k-1, \ell}^{(s+4)}, k\geq 1, \ell \geq 1. $$ Then dominant monomials in $\chi_q( P_{k, \ell-1}^{(s)} )  \chi_q( P_{k-1, \ell}^{(s+4)} )$ are
\begin{align*}
& M_0=M, \ M_1=MA_{3, s+4k+4\ell}^{-1}, \ M_2=M_1A_{3, s+4k+4\ell-4}^{-1}, \ \ldots, \nonumber \\
& M_{\ell-1}=M_{\ell-2}A_{3, s+4k+8}^{-1}, \ M_{\ell}=M_{\ell-1}A_{2, s+4k+2}^{-1}A_{3, s+4k+4}^{-1}, \nonumber \\ & M_{\ell+1}=M_{\ell}A_{3, s+4k-2}^{-1}A_{2, s+4k}^{-1}, \ M_{\ell+2}=M_{\ell+1}A_{3, s+4k-6}^{-1}, \\
& M_{\ell+3}=M_{\ell+2}A_{3, s+4k-10}^{-1}, \ \ldots, \ M_{k+\ell}=M_{k+\ell-1}A_{3, s+2}^{-1}.
\end{align*}
The dominant monomials in $\chi_q( P_{k, \ell}^{(s)} )  \chi_q( P_{k-1, \ell-1}^{(s+4)} )$ are $M_0, \ldots, M_{k+\ell-1}$.

\item Let $$M=O_{k, \ell-1}^{(s)} O_{k-1, \ell}^{(s+2)}, k\geq 1, \ell \geq 1. $$ Then dominant monomials in $\chi_q( O_{k, \ell-1}^{(s)} )  \chi_q( O_{k-1, \ell}^{(s+2)} )$ are
\begin{align*}
& M_0=M, \ M_1=MA_{2, s+2k+2\ell+4}^{-1}, \ M_2=M_1A_{2, s+2k+2\ell+2}^{-1}, \ \ldots, \nonumber \\
& M_{\ell-1}=M_{\ell-2}A_{2, s+2k+7}^{-1}, \ M_{\ell}=M_{\ell-1}A_{1, s+2k+4}^{-1}A_{2, s+2k+5}^{-1}, \nonumber \\
& M_{\ell+1}=M_{\ell}A_{1, s+2k+2}^{-1}, \ M_{\ell+2}=M_{\ell+1}A_{2, s+2k-1}^{-1}A_{1, 2k}^{-1}, \\
& M_{\ell+3}=M_{\ell+2}A_{2, s+2k-3}^{-1}, \ M_{\ell+4}=M_{\ell+3}A_{2, s+2k-5}^{-1}, \ \ldots, \ M_{k+\ell+1}=M_{k+\ell}A_{2, s+1}^{-1}.
\end{align*}
The dominant monomials in $\chi_q( O_{k, \ell}^{(s)} )  \chi_q( O_{k-1, \ell-1}^{(s+2)} )$ are $M_0, \ldots, M_{k+\ell}$.

\end{enumerate}
In each case, for each $i$, the multiplicity of $M_i$ in the corresponding product of $q$-characters is $1$.
\end{lemma}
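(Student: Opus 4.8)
The plan is to establish all thirteen cases by a single template, following the method of \cite{MY12b} and \cite{LM12} and mirroring the specialness arguments of Section~\ref{prove special}. First I would invoke Theorem~\ref{special} and Theorem~\ref{special in the system III} to conclude that every factor $\mathcal{L}, \mathcal{R}, \mathcal{T}, \mathcal{B}$ occurring in the lemma is special, so that the FM algorithm computes each $q$-character and, since $\chi_q$ is a ring homomorphism, the monomials of $\chi_q(\mathcal{L})\chi_q(\mathcal{R})$ are exactly the products $m_L m_R$ with $m_L \in \mathscr{M}(\mathcal{L})$ and $m_R \in \mathscr{M}(\mathcal{R})$. Writing $M$ for the product of the two highest $l$-weights, every such monomial lies in $M\mathcal{Q}^-$. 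I would then fix $U\subset I\times\mathbb{C}^\times$ consisting of the indices whose spectral parameter does not exceed the largest one occurring in $M$: if a monomial $m=M\prod_{(i,a)} A_{i,a}^{-v_{i,a}}$ uses some $A_{i,a}^{-1}$ of too-large spectral parameter, its rightmost negative variable cannot be produced with a positive power by $M$ or by any other factor, so $m$ is right-negative and hence not dominant. This reduces the classification to the dominant monomials of $\mathrm{trunc}_{M\mathcal{Q}_U^-}\bigl(\chi_q(\mathcal{L})\chi_q(\mathcal{R})\bigr)$.

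Next I would run the rightmost-factor analysis on a candidate dominant monomial $m=m_Lm_R$. Examining the variables of highest spectral parameter and using Lemma~\ref{fundamental q-characters} together with the closure of right-negativity under products, I would show that every negative power forced by a step $A_{i,a}^{-1}$ must be cancelled against a specific factor on the opposite side, and that the only way to preserve dominance at each stage is to follow the displayed chain $M_0=M$, $M_1=M_0A^{-1}_{\bullet}$, $M_2=M_1A^{-1}_{\bullet},\dots$. The chain terminates exactly when the next admissible step $A^{-1}$ would have to cancel against variables of spectral parameter $s+2$ (resp.\ $s+1$) already exhausted on the left, which pins down the last index $M_{k+\ell-1}$; the shorter list $M_0,\dots,M_{k+\ell-2}$ for $\chi_q(\mathcal{T})\chi_q(\mathcal{B})$ arises from the same analysis, the shortening reflecting the one fewer box of $\mathcal{T}\otimes\mathcal{B}$ in the relevant fundamental-weight direction. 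In several cases this step-by-step argument can alternatively be packaged by verifying the hypotheses of Theorem~\ref{truncated} for the explicit set $\mathcal{M}=\{M_0,M_1,\dots\}$, checking conditions (i)--(iv) against the $\mathfrak{sl}_2$-reductions $\beta_1,\beta_2,\beta_3$.

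For the multiplicity statement I would use that the $A_{i,a}$ are algebraically independent: each $M_i$ has a unique expression as $M$ times a product of $A^{-1}$'s, and this forces a unique factorization $M_i=m_Lm_R$ with $m_L\in\mathscr{M}(\mathcal{L})$, $m_R\in\mathscr{M}(\mathcal{R})$, whence the coefficient of $M_i$ in the product is exactly $1$.

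The hard part will be the treatment of the turning points $M_\ell$ (and their analogues $M_m$, $M_{2\ell+1}$, $M_{2\ell+2}$), where the chain switches from applying the $A_3^{-1}$'s or $A_1^{-1}$'s to the $A_2^{-1}$'s and a single step is a product of several $A^{-1}$'s passing through non-dominant intermediate monomials. There I must verify both that the combined step lands on a dominant monomial and that no partial application or alternative cancellation produces a stray dominant monomial, while keeping the multiplicity equal to one. This is most delicate in cases (2), (3), and (5), where three factors $A^{-1}$ are combined at the turn, in case (11), where four are combined, and in case (10) (the $\mathcal{U}$-system), where the parity function $\sigma$ and the floors $\lfloor(\ell-1)/2\rfloor$ enter; for the last I would split into the even and odd subcases of $\ell$ and use the explicit fundamental $q$-characters of Lemma~\ref{fundamental q-characters} to rule out the spurious branches.
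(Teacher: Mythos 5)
Your plan follows essentially the same route as the paper's proof: reduce via right-negativity, show the right-hand factor of a dominant product must be its highest $l$-weight monomial, then trace the unique chain of cancellations step by step with the FM algorithm, eliminating the spurious branches at the turning points by means of Lemma \ref{fundamental q-characters}. One small caution: Theorem \ref{truncated} computes the truncated $q$-character of a single irreducible $L(m_+)$, so it cannot be applied verbatim to the product $\chi_q(\mathcal{L})\chi_q(\mathcal{R})$ as you suggest in passing (the paper reserves it for the individual irreducibles $L(M_i)$ in Lemma \ref{monomials that are not in tensor product}), and your multiplicity-one argument needs the classification itself (which forces $m_R$ to be the highest monomial) rather than algebraic independence of the $A_{i,a}$ alone.
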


\begin{proof}

We prove the case of $\chi_q(T_{k, \ell-1, 0}^{(s)} )  \chi_q( T_{k-1, \ell, 0}^{(s+4)} )$. The other cases are similar.
Let $m'_1=T_{k, \ell-1, 0}^{(s)}$, $m'_2=T_{k-1, \ell, 0}^{(s+4)}$. Without loss of generality, we assume that $s=0$. Then
\begin{align*}
& m'_1=(3_{0}3_{4} \cdots 3_{4k-4})(2_{4k+1}2_{4k+3}\cdots 2_{4k+2\ell - 3}), \\
& m'_2=(3_{4}3_{6} \cdots 3_{4k-4})(2_{4k+1}2_{4k+3}\cdots 2_{4k+2\ell - 1}).
\end{align*}

Let $m=m_1m_2$ be a dominant monomial, where $m_i \in \chi_q(m'_i), i=1, 2$.
Let
\begin{align*}
m_3=3_{4}3_{6} \cdots 3_{4k-4}, \quad m_4=2_{4k+1}2_{4k+3}\cdots 2_{4k+2\ell - 1}.
\end{align*}
If $m_2 \in \chi_q(m_3)(\chi_q(m_4)-m_4)$, then $m=m_1m_2$ is right negative and hence $m$ is not dominant. Therefore $m_2 \in \chi_q(m_3)m_4$.

Suppose that $m_2 \in \mathscr{M}(L(m'_2)) \cap \mathscr{M}((\chi_q(m_3)-m_3)m_4)$.  By the FM algorithm for $L(m'_2)$, $m_2$ must have a factor $3_{4k}^{-1}$. By Lemma \ref{fundamental q-characters}, $m_1$ does not have the factor $3_{4k}$. Therefore $m=m_1 m_2$ is not dominant. This is a contradiction. Therefore $m_2=m_3m_4=m'_2$.

If
\begin{align*}
m_1 \in \chi_q( & 3_{0}3_{4} \cdots 3_{4k-4} 2_{4k+1}2_{4k+3}\cdots 2_{4k+2\ell - 5} ) \times \\
& \times ( \chi_q( 2_{4k+2\ell - 3} ) - 2_{4k+2\ell - 3} - 2_{4k+2\ell - 3}^{-1} 1_{4k+2\ell - 4} 3_{4k+2\ell - 4} ),
\end{align*}
then $m=m_1m_2$ is right-negative and hence not dominant. Therefore $m_1$ is in one of the following polynomials
\begin{align}
& \chi_q( 3_{0}3_{4} \cdots 3_{4k-4} 2_{4k+1}2_{4k+3}\cdots 2_{4k+2\ell - 5} ) 2_{4k+2\ell - 3}, \label{first set} \\
& \chi_q( 3_{0}3_{4} \cdots 3_{4k-4} 2_{4k+1}2_{4k+3}\cdots 2_{4k+2\ell - 5} ) 2_{4k+2\ell - 1}^{-1} 1_{4k+2\ell - 2} 3_{4k+2\ell - 2}. \label{second set}
\end{align}

If $m_1$ is in (\ref{first set}), then $m_1=m'_1$. The dominant monomial we obtain is $M_0=m'_1m'_2$. If $m_1$ is the highest monomial in (\ref{second set}), then we obtain the dominant monomial $M_1=m_1m'_2$. Suppose that $m_1$ is in
\begin{eqnarray*}
\mathscr{M}(L(m'_1)) & \cap & \mathscr{M}\big( \chi_q(3_{0}3_{4} \cdots 3_{4k-4} 2_{4k+1}2_{4k+3}\cdots 2_{4k+2\ell - 7}) \times \\
&& \qquad \times (\chi_q(2_{4k+2\ell - 5}) - 2_{4k+2\ell - 5} ) 2_{4k+2\ell - 3}^{-1} 1_{4k+2\ell - 4} 3_{4k+2\ell - 4} \big).
\end{eqnarray*}
By the FM algorithm for $L(m'_1)$, $m_1$ is in one of the following polynomials
\begin{align}
& \chi_q(3_{0}3_{4} \cdots 3_{4k-4} 2_{4k+1}2_{4k+3}\cdots 2_{4k+2\ell - 7}) 2_{4k+2\ell - 3}^{-1} 1_{4k+2\ell - 4} 3_{4k+2\ell - 4} 2_{4k+2\ell - 1}^{-1} 1_{4k+2\ell - 2} 3_{4k+2\ell - 2}, \label{set 1} \\
& \chi_q(3_{0}3_{4} \cdots 3_{4k-4} 2_{4k+1}2_{4k+3}\cdots 2_{4k+2\ell - 7}) 1_{4k+2\ell - 4} 3_{4k+2\ell}^{-1} 1_{4k+2\ell - 2} 3_{4k+2\ell - 2}.      \label{set 2}
\end{align}
If $m_1$ is in (\ref{set 2}), then $m=m_1m_2$ is right-negative and hence $m$ is not dominant. This is a contradiction. Therefore $m_1$ is in (\ref{set 1}). If $m_1$ is the highest monomial in (\ref{set 1}), then we obtain the dominant monomial $M_2=m_1m'_2$. Suppose that $m_1$ is not the highest monomial in (\ref{set 1}). Then by the FM algorithm, $m_1$ is in the set
\begin{align*}
& \chi_q(3_{0}3_{4} \cdots 3_{4k-4} 2_{4k+1}2_{4k+3}\cdots 2_{4k+2\ell - 9}) \times \\
& \times 2_{4k+2\ell - 5}^{-1} 1_{4k+2\ell - 6} 3_{4k+2\ell - 6} 2_{4k+2\ell - 3}^{-1} 1_{4k+2\ell - 4} 3_{4k+2\ell - 4} 2_{4k+2\ell - 1}^{-1} 1_{4k+2\ell - 2} 3_{4k+2\ell - 2}.
\end{align*}
If $m_1$ is the highest monomial in the above set, then we obtain the dominant monomial $M_3=m_1m'_2$. Continue this procedure, we obtain dominant monomials $M_4, \ldots, M_{\ell-1}$ and the remaining dominant monomials are of the form $m_1m'_2$, where $m_1$ is a non-highest monomial in
\begin{align*}
\mathscr{M}(L(m'_1)) \cap \mathscr{M}\big( & \chi_q(3_{0}3_{4} \cdots 3_{4k-4}) \times \\
& \times 2_{4k+3}^{-1}2_{4k+5}^{-1}\cdots 2_{4k+2\ell-1}^{-1} 1_{4k+2}1_{4k+4} \cdots 1_{4k+2\ell-2} 3_{4k+2}3_{4k+4} \cdots 3_{4k+2\ell-2} \big).
\end{align*}
Suppose that $m_1$ is a non-highest monomial in the above set. Since the non-highest monomials in $\chi_q(3_{0}3_{4} \cdots 3_{4k-4})$ are right-negative, we need cancelations of factors with negative powers of some monomial in $\chi_q(3_{0}3_{4} \cdots 3_{4k-4})$ with
$$
1_{4k+2}1_{4k+4} \cdots 1_{4k+2\ell-2} 3_{4k+2}3_{4k+4} \cdots 3_{4k+2\ell-2} 2_{4k+1}.
$$
The only cancelation can happen is to cancel $1_{4k+2}$ or $3_{4k+2}$ or $3_{4k+4}$ or $2_{4k+1}$. Since $1_{4k}^{2}$, $3_{4k}^2$, and $3_{4k-2}^2$ do not appear in
\begin{align*}
\mathscr{M}(L(m'_1)) \cap \mathscr{M}\big( & \chi_q(3_{0}3_{4} \cdots 3_{4k-4}) \times \\
& \times 2_{4k+3}^{-1}2_{4k+5}^{-1}\cdots 2_{4k+2\ell-1}^{-1} 1_{4k+2}1_{4k+4} \cdots 1_{4k+2\ell-2} 3_{4k+2}3_{4k+4} \cdots 3_{4k+2\ell-2} \big),
\end{align*}
$1_{4k+2}$, $3_{4k+2}$ and $3_{4k+4}$ cannot be canceled. Therefore we need a cancelation with $2_{4k+1}$. The only monomials in $\chi_q(3_{0}3_{4} \cdots 3_{4k-4})$ which can cancel $2_{4k+1}$ is in one of the following polynomials
\begin{align*}
& \chi_q(3_{0}3_{4} \cdots 3_{4k-8}) 1_{4k} 2_{4k-3} 2_{4k+1}^{-1} , \\
& \chi_q(3_{0}3_{4} \cdots 3_{4k-8}) 1_{4k-2} 1_{4k} 2_{4k-1}^{-1} 2_{4k+1}^{-1} 3_{4k-2} , \\
& \chi_q(3_{0}3_{4} \cdots 3_{4k-8}) 2_{4k+1}^{-1} 2_{4k+3}^{-1} 3_{4k}.
\end{align*}
Therefore $m_1$ is in one of the following sets
\begin{align}
\mathscr{M}(L(m'_1)) \cap \mathscr{M}\big( & \chi_q(3_{0}3_{4} \cdots 3_{4k-8}) 1_{4k} 2_{4k-3} 2_{4k+1}^{-1} \times \nonumber \\
& \times 2_{4k+3}^{-1}2_{4k+5}^{-1}\cdots 2_{4k+2\ell-1}^{-1} 1_{4k+2}1_{4k+4} \cdots 1_{4k+2\ell-2} 3_{4k+2}3_{4k+4} \cdots 3_{4k+2\ell-2} \big), \label{one} \\
\mathscr{M}(L(m'_1)) \cap \mathscr{M}\big( & \chi_q(3_{0}3_{4} \cdots 3_{4k-8}) 1_{4k-2} 1_{4k} 2_{4k-1}^{-1} 2_{4k+1}^{-1} 3_{4k-2} \times \nonumber \\
& \times 2_{4k+3}^{-1}2_{4k+5}^{-1}\cdots 2_{4k+2\ell-1}^{-1} 1_{4k+2}1_{4k+4} \cdots 1_{4k+2\ell-2} 3_{4k+2}3_{4k+4} \cdots 3_{4k+2\ell-2} \big), \label{two} \\
\mathscr{M}(L(m'_1)) \cap \mathscr{M}\big( & \chi_q(3_{0}3_{4} \cdots 3_{4k-8}) 2_{4k+1}^{-1} 2_{4k+3}^{-1} 3_{4k} \times \nonumber \\
& \times 2_{4k+3}^{-1}2_{4k+5}^{-1}\cdots 2_{4k+2\ell-1}^{-1} 1_{4k+2}1_{4k+4} \cdots 1_{4k+2\ell-2} 3_{4k+2}3_{4k+4} \cdots 3_{4k+2\ell-2} \big). \label{three}
\end{align}

If $m_1$ is in (\ref{two}), then we need to cancel $2_{4k-1}^{-1}$. But $2_{4k-1}^{-1}$ cannot be canceled by any monomials in $\chi_q(3_{0}3_{4} \cdots 3_{4k-8})$ or by $m'_2$. Hence $m_1$ is not in (\ref{two}).

If $m_1$ is in (\ref{three}), then we need to cancel $2_{4k+3}^{-1}$. But $2_{4k+3}^{-1}$ cannot be canceled by any monomials in $\chi_q(3_{0}3_{4} \cdots 3_{4k-8})$ or by $m'_2$. Therefore $m_1$ is not in (\ref{three}). Hence $m_1$ is in (\ref{one}).

If $m_1$ is the highest monomial in (\ref{one}) with respect to $\leq$ defined in (\ref{partial order of monomials}), then we obtain the dominant monomial $M_{\ell} = m_1m'_2$. Suppose that $m_1$ a non-highest monomial in (\ref{one}). By the FM algorithm, $m_1$ must in
\begin{align*}
\mathscr{M}(L(m'_1)) \cap \mathscr{M}\big( & \chi_q(3_{0}3_{4} \cdots 3_{4k-12}) 3_{4k-4}^{-1} 2_{4k-7} 2_{4k-5} 1_{4k} 2_{4k-3} 2_{4k+1}^{-1} \times \nonumber \\
& \times 2_{4k+3}^{-1}2_{4k+5}^{-1}\cdots 2_{4k+2\ell-1}^{-1} 1_{4k+2}1_{4k+4} \cdots 1_{4k+2\ell-2} 3_{4k+2}3_{4k+4} \cdots 3_{4k+2\ell-2} \big).
\end{align*}
If $m_1$ is the highest monomial in the above set, then we obtain the dominant monomial $M_{\ell+1} = m_1m'_2$. Continue this procedure, we can show that the only remaining dominant monomials are $M_{\ell+2}, \ldots, M_{k+\ell-1}$.

It is clear that the multiplicity of $M_i, i=1, \ldots, k+\ell-1$, in $\chi_q(m_1)  \chi_q(m_2)$ is $1$.
\end{proof}

\subsection{Products of sources are special}
\begin{lemma} \label{dominant monomials in sources}
Let $[\mathcal{S}]$ be the last summand in one of the relations in Theorem \ref{the system I}. Then $\mathcal{S}$ is special.
\end{lemma}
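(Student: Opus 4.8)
The plan is to treat $\mathcal{S}$ as the tensor product $L(M_1)\otimes\cdots\otimes L(M_p)$ of the irreducible factors appearing on the right-hand side of the relevant relation, and to show that $\prod_{j=1}^{p}\chi_q(L(M_j))$ contains exactly one dominant monomial, namely $M:=\prod_{j=1}^{p}M_j$. Since $M$ is manifestly dominant and is the highest $l$-weight of $L(M)$, this is precisely the assertion that $\mathcal{S}$ is special. Two of the sources, those of (\ref{relation 2 in the system I}) and (\ref{relation 4 in the system I}), consist of a single factor $\mathcal{S}_{2k-1,m-1}^{(s+1)}$ resp. $\mathcal{S}_{k-1,2m-1}^{(s+1)}$, so they are already covered by Theorem \ref{special}; the content of the lemma lies in the sources with two, three, or four factors.

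First I would record that every factor $M_j$ is one of the modules proved special in Theorem \ref{special} (or a Kirillov--Reshetikhin module), so that the FM algorithm computes each $\chi_q(L(M_j))$ exactly and each $\mathscr{M}(L(M_j))\subset M_j\mathcal{Q}^-$ has $M_j$ as its unique dominant element. A candidate dominant monomial of the product has the form $N=\prod_j N_j$ with $N_j\in\mathscr{M}(L(M_j))$. The basic dichotomy, exactly as in Section \ref{prove special}, is this: if some $N_j$ is right-negative and sits at the right end of the configuration (largest spectral parameters), then, because a product of right-negative monomials is right-negative and every monomial $\le$ a right-negative one is right-negative, the factor $N_j$ contributes a negative variable $i_c^{-1}$ of maximal spectral parameter that no other factor can cancel, so $N$ is right-negative and hence not dominant. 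I would therefore order the factors by spectral parameter and peel them off from the right, forcing $N_j=M_j$ one factor at a time.

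The genuinely delicate point is that, unlike Kirillov--Reshetikhin modules, some of the special factors (notably the $\mathcal{T}_{\ast,0,\ast}$ and $\mathcal{R}$ factors) do possess non-highest monomials that are \emph{not} right-negative; these are precisely the finitely many exceptional monomials isolated by the FM analysis in Section \ref{prove special} (for instance the monomials $\bar m_1,\dots,\bar m_4$ arising for $\mathcal{T}_{k,0,\ell}^{(0)}$). For each such exceptional $N_j$ I would pin down its characteristic negative variable (an isolated factor such as $3_c^{-1}$) and show, using Lemma \ref{fundamental q-characters} together with the explicit spectral ranges of the remaining factors $M_{j'}$, that this variable cannot be cancelled inside any $\chi_q(L(M_{j'}))$; thus no exceptional choice can occur in a dominant $N$ either. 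Combining the two mechanisms leaves $N=M$ as the only possibility, whence $\mathcal{S}$ is special.

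I expect the main obstacle to be the bookkeeping for the three- and four-factor sources, namely those of (\ref{relation 3 in the system I}), (\ref{relation 5 in the system I}), (\ref{relation 6 in the system I}), (\ref{relation 7 in the system I}), and (\ref{relation 8 in the system I}), where the spectral ranges of adjacent factors interleave rather than separate cleanly. There the uncancellable-negative-variable argument must be checked against each neighbouring factor in turn, and one must verify that the shifts ($s+2\sigma(k)+1$ versus $s+2\sigma(k+1)+1$, and similar) really do place the factors so that a deviation in one of them produces a variable lying outside the support of every other factor's $q$-character. This is the same meticulous, case-by-case monomial tracking carried out in Section \ref{prove special}, now performed simultaneously across several tensor factors.
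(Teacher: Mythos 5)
Your plan is correct and follows essentially the same route as the paper's proof: classify the dominant monomials of the product of the factors' $q$-characters (each factor being special by Theorem \ref{special}, so computable by the FM algorithm), use right-negativity to force all but finitely many choices to be the highest monomials, and then rule out the remaining non-right-negative deviations by exhibiting an uncancellable negative variable via Lemma \ref{fundamental q-characters}. The only cosmetic difference is that the paper, in the one case it writes out, additionally splits a factor such as $R_{2k-1,\ell,\lfloor\ell/2\rfloor}^{(s+1)}$ into its per-node strings $n_1,n_1',n_1''$ using Lemma \ref{contains in a larger set} to organize the same cancellation bookkeeping.
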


\begin{proof}
We prove the case of $\mathcal{S}$ in the first relation in Theorem \ref{the system I}. The other cases are similar.

Let $S=\chi_q( R_{2k-1, \ell,  \lfloor \frac{\ell}{2} \rfloor}^{(s+1)} )  \chi_q( T_{\lfloor \frac{\ell-1}{2} \rfloor, 0, 0}^{(s+4k+4)} ) $, $\ell=2m$, $m\in \mathbb{Z}_{\geq 0}$. Let
\begin{align*}
& n_1=2_{s+1}2_{s+3}\cdots 2_{s+4k-3}, \ n'_1=1_{s+4k}1_{s+4k+2}\cdots 1_{s+4k+4m-2}, \\
& n''_1=3_{s+4k+2} 3_{s+4k+6} \cdots 3_{s+4k+4m-2}, \ n_2=3_{s+4k+4}3_{s+4k+8}\cdots 3_{s+4k+4m-4}.
\end{align*}
Then $R_{2k-1, \ell,  \lfloor \frac{\ell}{2} \rfloor}^{(s+1)}=n_1n'_1n''_1, T_{\lfloor \frac{\ell-1}{2} \rfloor, 0, 0}^{(s+4k+4)}=n_2$.
Let $m=m_1m_2$ be a dominant monomial, where
\begin{align*}
m_1\in \chi_q(R_{2k-1, \ell,  \lfloor \frac{\ell}{2} \rfloor}^{(s+1)}), \ m_2\in \chi_q(T_{\lfloor \frac{\ell-1}{2} \rfloor, 0, 0}^{(s+4k+4)}).
\end{align*}

If $m_2 \neq n_2$ or $m_1 \in \chi_q(n_1 n_1'')(\chi_q(n'_1) - n'_1 )$ or $m_1 \in \chi_q(n_1n_1')(\chi_q(n''_1) - n''_1 )$, then $m$ is right-negative which contradicts the fact that $m$ is dominant. Therefore $m_2= n_2$, $m_1 \in \chi_q(n_1)n'_1n_1''$.

If $m_1$ is the highest monomial in $\chi_q(n_1)n'_1n_1''$, then we obtain a dominant monomial $n_1n'_1n_1''n_2$. Suppose that $m_1$ is a non-highest monomial in $\chi_q(n_1)n'_1n_1''$. Then $m_1$ is in $(\chi_q(n_1)-n_1)n'_1n_1''$. By the FM algorithm, $m_1$ is one of the following monomials
\begin{align*}
\bar{m}_1=n_1n_1'n_1''A_{2, s+4k-2}^{-1}, \ \bar{m}_2=\bar{m}_1A_{2, s+4k-4}^{-1}, \ \ldots, \ \bar{m}_{2k-1}=\bar{m}_{2k-2}A_{2, s+2}^{-1}.
\end{align*}
Therefore $m=m_1m_2=m_1n_2$ is one of the following monomials
\begin{align*}
\bar{m}_1n_2=n_1n_1'n_1''A_{2, s+4k-2}^{-1}n_2, \ \bar{m}_2n_2=\bar{m}_1A_{2, s+4k-4}^{-1}n_2, \ \ldots, \ \bar{m}_{2k-1}n_2=\bar{m}_{2k-2}A_{2, s+2}^{-1}n_2.
\end{align*}
Hence $m$ is not a dominant monomial. This is a contradiction. Therefore the only dominant monomial in $S$ is $R_{2k-1, \ell,  \lfloor \frac{\ell}{2} \rfloor}^{(s+1)} T_{\lfloor \frac{\ell-1}{2} \rfloor, 0, 0}^{(s+4k+4)} $.
\end{proof}

\subsection{Proof of Theorem \ref{the system I}}
In Section \ref{prove special}, we have shown that $\mathcal{R}_{0, 2\ell+i, \ell}^{(s)}$, $i=0, 1, 2$, and $\mathcal{U}_{k, \ell}^{(s)}$ are special. Therefore by Lemma \ref{dominant monomials in sources}, (\ref{relation 9 in the system I}) and (\ref{relation 2 in system 3}) are true. By Lemmas \ref{dominant monomials} and \ref{dominant monomials in sources}, the dominant monomials in the $q$-characters of the left hand side and of the right hand side of every relation in Theorem \ref{the system I} are the same. The theorem follows.

\section{Proof of Theorem \ref{irreducible} and Theorem \ref{irreducible 3}} \label{prove irreducible}

By Lemma \ref{dominant monomials in sources}, $\mathcal{S}$ is special and hence irreducible. Therefore we only have to show that each $\mathcal{T}\otimes \mathcal{B}$ in Theorem \ref{irreducible} and Theorem \ref{irreducible 3} is irreducible. It suffices to prove that for each non-highest dominant monomial $M$ in $\mathcal{T} \otimes \mathcal{B}$, we have $\mathscr{M}(L(M)) \not\subset \mathscr{M}(\mathcal{T} \otimes \mathcal{B})$. The idea is similar as in \cite{Her06}, \cite{MY12b}, \cite{LM12}.

\begin{lemma} \label{monomials that are not in tensor product}
We consider the same cases as in Lemma \ref{dominant monomials}. In each case $M_i$ are the dominant monomials described by that lemma.
\begin{enumerate}[(1)]
\item For $k\geq 1, \ell \geq 1$, let
\begin{align*}
& n_1=M_1A_{2, s+4k+2\ell-2}^{-1}, \ n_2=M_2A_{2, s+4k+2\ell-4}^{-1}, \ \ldots, \nonumber \\ & n_{\ell-1}=M_{\ell-1}A_{2, s+4k+2}^{-1}, \ n_{\ell}=M_{\ell}A_{3, s+4k-2}^{-1}A_{2, s+4k}^{-1}, \nonumber \\ & n_{\ell+1}=M_{\ell+1}A_{3, s+4k-6}^{-1}, \ n_{\ell+2}=M_{\ell+2}A_{3, s+4k-10}^{-1}, \ \ldots, \ n_{k+\ell-2}=M_{k+\ell-2}A_{3, s+6}^{-1}.
\end{align*}
Then for $i=1, \ldots, k+\ell-2$, $n_i\in \chi_q(M_i)$ and $n_i \not\in \chi_q(T_{k, \ell, 0}^{(s)} )  \chi_q( T_{k-1, \ell-1, 0}^{(s+4)})$.

\item For $k\geq 1, m \geq 1$, let
\begin{align*}
& n_1=M_1A_{1, s+4k+2m-1}^{-1}, \ n_2=M_2A_{1, s+4k+2m-3}^{-1}, \ \ldots, \nonumber \\ & n_{m-1}=M_{m-1}A_{1, s+4k+3}^{-1}, \ n_{m}=M_{m}A_{3, s+4k-2}^{-1}A_{2, s+4k}^{-1}A_{1, s+4k+1}^{-1}, \nonumber \\ & n_{m+1}=M_{m+1}A_{3, s+4k-6}^{-1}, \ n_{m+2}=M_{m+2}A_{3, s+4k-10}^{-1}, \ \ldots, \ n_{k+m-2}=M_{k+m-2}A_{3, s+6}^{-1}.
\end{align*}
Then for $i=1, \ldots, k+m-2$, $n_i\in \chi_q(M_i)$ and $n_i \not\in \chi_q(T_{k, 0, m}^{(s)} )  \chi_q( T_{k-1, 0, m-1}^{(s+4)})$.

\item For $k\geq 1, \ell \geq 1$, let
\begin{align*}
& n_1=M_1A_{2, s+2k+2\ell+1}^{-1}, \ n_2=M_2A_{2, s+2k+2\ell-1}^{-1}, \ \ldots,  \nonumber \\ &  n_{\ell-1}=M_{\ell-1}A_{2, s+2k+5}^{-1}, \ n_{\ell}=M_{\ell}A_{2, s+2k-1}^{-1}A_{3, s+2k+1}^{-1}A_{2, 2k+3}^{-1}, \ \nonumber \\ &  n_{\ell+1}=M_{\ell+1}A_{2, s+2k-3}^{-1}, \ n_{\ell+2}=M_{\ell+2}A_{2, s+2k-5}^{-1}, \ \ldots, n_{k+\ell-2}=M_{k+\ell-2}A_{2, s+3}^{-1}.
\end{align*}
Then for $i=1, \ldots, k+\ell-2$, $n_i\in \chi_q(M_i)$ and $n_i \not\in \chi_q( S_{k, \ell}^{(s)} ) \chi_q( S_{k-1, \ell-1}^{(s+2)} )$.

\item For $\ell \geq 1, r \in \{1, 2\}$, let
\begin{align*}
& n_1=M_1A_{1, s+2\ell+2r-2}^{-1}, \ n_2=M_2 A_{1, s+2\ell+2r-4}^{-1}, \ \ldots, \nonumber \\ & n_{r-1}=M_{r-1}A_{1, s+2\ell+2}^{-1}, \ n_{r}=M_{r}A_{2, s+2\ell}^{-1}A_{1, s+2\ell+1}^{-1}, \nonumber \\ & n_{r+1}=M_{r+1}A_{2, s+2\ell-2}^{-1}, \ n_{r+2}=M_{r+2}A_{2, s+2\ell-4}^{-1}, \ \ldots, \ n_{\ell+r-2}=M_{\ell+r-2}A_{2, s+4}^{-1}.
\end{align*}
Then for $i=1, \ldots, \ell+r-2$, $n_i\in \chi_q(M_i)$ and $n_i \not\in \chi_q(T_{0, \ell, r}^{(s)} )  \chi_q( T_{0, \ell-1, r-1}^{(s+2)})$.

\item For $k, m \geq 1$, let
\begin{align*}
& n_1=M_1A_{3, s+2k+4m-2}^{-1}, \ n_2=M_2A_{3, s+2k+4m-6}^{-1}, \ \ldots, \nonumber \\ & n_{m-1}=M_{m-1}A_{3, s+2k+6}^{-1}, \ n_{m}=M_{m}A_{1, s+2k-1}^{-1}A_{2, s+2k}^{-1}A_{3, 2k+2}^{-1}, \nonumber \\ & n_{m+1}=M_{m+1}A_{1, s+2k-3}^{-1}, \ n_{m+2}=M_{m+2}A_{1, s+2k-5}^{-1}, \ \ldots, \ n_{k+m-2}=M_{k+m-2}A_{1, s+3}^{-1}.
\end{align*}
Then for $i=1, \ldots, k+m-2$, $n_i\in \chi_q(M_i)$ and $n_i \not\in \chi_q(\tilde{T}_{k, 0, m}^{(s)} )  \chi_q( \tilde{T}_{k-1, 0, m-1}^{(s+2)})$.

\item For $k\geq 1, \ell \geq 1$, let
\begin{align*}
& n_1=M_1A_{3, s+2k+4\ell-3}^{-1}, \ n_2=M_2 A_{3, s+2k+4\ell-7}^{-1}, \ \ldots, \nonumber \\ & n_{\ell-1}=M_{\ell-1}A_{3, s+2k+5}^{-1}, \ n_{\ell}=M_{\ell}A_{2, s+2k-1}^{-1}A_{3, s+2k+1}^{-1}, \nonumber \\ & n_{\ell+1}=M_{\ell+1}A_{2, s+2k-3}^{-1}, \ n_{\ell+2}=M_{\ell+2}A_{2, s+2k-5}^{-1}, \ \ldots, \ n_{k+\ell-2}=M_{k+\ell-2}A_{2, s+3}^{-1}.
\end{align*}
Then for $i=1, \ldots, k+\ell-2$, $n_i\in \chi_q(M_i)$ and $n_i \not\in \chi_q(R_{k, 2\ell, \ell}^{(s)} )  \chi_q( R_{k-1, 2\ell, \ell-1}^{(s+2)})$.

\item For $k\geq 1, \ell \geq 1$, let
\begin{align*}
& n_1=M_1A_{1, s+2k+4\ell}^{-1}, \ n_2=M_2A_{1, s+2k+4\ell-2}^{-1}, \ \ldots, \nonumber \\ & n_{2\ell}=M_{2\ell}A_{1, s+2k+2}^{-1}, \ n_{2\ell+1}=M_{2\ell+1}A_{2, s+2k-1}^{-1}A_{1, s+2k}^{-1}, \nonumber \\ & n_{2\ell+2}=M_{2\ell+2}A_{2, s+2k-3}^{-1}, \ n_{2\ell+3}=M_{2\ell+3}A_{2, s+2k-5}^{-1}, \ \ldots, \ n_{k+2\ell-1}=M_{k+2\ell-1}A_{2, s+3}^{-1}.
\end{align*}
Then for $i=1, \ldots, k+2\ell-1$, $n_i\in \chi_q(M_i)$ and $n_i \not\in \chi_q(R_{k, 2\ell+1, \ell}^{(s)} )  \chi_q( R_{k-1, 2\ell, \ell}^{(s+2)})$.

\item For $k\geq 1, \ell \geq 1$, let
\begin{align*}
& n_1=M_1A_{1, s+2k+4\ell+2}^{-1}, \ n_2=M_2A_{1, s+2k+4\ell}^{-1}, \ \ldots, \nonumber \\ & n_{2\ell+1}=M_{2\ell+1}A_{1, s+2k+2}^{-1}, \ n_{2\ell+2}=M_{2\ell+2}A_{2, s+2k-1}^{-1}A_{1, s+2k}^{-1}, \nonumber \\ & n_{2\ell+3}=M_{2\ell+3}A_{2, s+2k-3}^{-1}, \ n_{2\ell+4}=M_{2\ell+4}A_{2, s+2k-5}^{-1}, \ \ldots, \ n_{k+2\ell}=M_{k+2\ell}A_{2, s+3}^{-1}.
\end{align*}
Then for $i=1, \ldots, k+2\ell$, $n_i\in \chi_q(M_i)$ and $n_i \not\in \chi_q(R_{k, 2\ell+2, \ell}^{(s)} )  \chi_q( R_{k-1, 2\ell+1, \ell}^{(s+2)})$.

\item For $k\geq 1, \ell \geq 1$, let
\begin{align*}
& n_1=M_1A_{2, s+2k+2\ell-2}^{-1}, \ n_2=M_2A_{2, s+2k+2\ell-4}^{-1}, \ \ldots, \nonumber \\ & n_{\ell-1}=M_{\ell-1}A_{2, s+2k+2}^{-1}, \ n_{\ell}=M_{\ell}A_{1, s+2k-1}^{-1}A_{2, s+2k}^{-1}, \nonumber \\ & n_{\ell+1}=M_{\ell+1}A_{1, s+2k-3}^{-1}, \ n_{\ell+2}=M_{\ell+2}A_{1, s+2k-5}^{-1}, \ \ldots, \ n_{k+\ell-2}=M_{k+\ell-2}A_{1, s+3}^{-1}.
\end{align*}
Then for $i=1, \ldots, k+\ell-2$, $n_i\in \chi_q(M_i)$ and $n_i \not\in \chi_q(\tilde{T}_{k, \ell, 0}^{(s)} )  \chi_q( \tilde{T}_{k-1, \ell-1, 0}^{(s+2)})$.

\item For $p\geq 2, \ell \geq 1$, let
\begin{align*}
& n_1=M_1A_{3, s+2p+2\ell-3}^{-1}, \ n_2=M_2A_{2, s+2p+2\ell-7}^{-1}, \ \ldots, \nonumber \\ & n_{ \lfloor \frac{\ell-1}{2} \rfloor }=M_{ \lfloor \frac{\ell-1}{2} \rfloor }A_{3, s+2p+2\sigma(\ell+1)+3}^{-1}, \ n_{ \lfloor \frac{\ell-1}{2} \rfloor + 1 }=M_{ \lfloor \frac{\ell-1}{2} \rfloor + 1 }A_{2, s+2p-1}^{-1}A_{3, s+2p+1}^{-1}, \nonumber \\ & n_{ \lfloor \frac{\ell-1}{2} \rfloor + 2}=M_{ \lfloor \frac{\ell-1}{2} \rfloor + 2 }A_{2, s+2p-3}^{-1}, \ n_{\lfloor \frac{\ell-1}{2} \rfloor+3}=M_{\lfloor \frac{\ell-1}{2} \rfloor+3}A_{2, s+2p-5}^{-1}, \ \ldots, \\
& n_{p+\lfloor \frac{\ell-1}{2} \rfloor - 1 }=M_{p+\lfloor \frac{\ell-1}{2} \rfloor - 1}A_{2, s+3}^{-1}.
\end{align*}
Then for $i=1, \ldots, p+\lfloor \frac{\ell-1}{2} \rfloor - 1$, $n_i\in \chi_q(M_i)$ and $n_i \not\in \chi_q( U_{p, \ell}^{(s)} )  \chi_q( U_{p-1, \ell-1}^{(s+2)} )$.

\item For $k\geq 1, \ell \geq 1$, let
\begin{align*}
& n_1=M_1A_{3, s+4k+4\ell-4}^{-1}, \ n_2=M_2A_{3, s+4k+4\ell-8}^{-1}, \ \ldots, \nonumber \\ & n_{\ell-1}=M_{\ell-1}A_{3, s+4k+6}^{-1}, \ n_{\ell}=M_{\ell}A_{3, s+4k-2}^{-1}A_{2, s+4k}^{-1} A_{2, s+4k-2}^{-1}A_{3, s+4k}^{-1}, \nonumber \\ & n_{\ell+1}=M_{\ell+1}A_{3, s+4k-6}^{-1}, \ n_{\ell+2}=M_{\ell+2}A_{3, s+4k-10}^{-1}, \ \ldots, \ n_{k+\ell-2}=M_{k+\ell-2}A_{3, s+6}^{-1}.
\end{align*}
Then for $i=1, \ldots, k+\ell-2$, $n_i\in \chi_q(M_i)$ and $n_i \not\in \chi_q( V_{k, \ell}^{(s)} )  \chi_q( V_{k-1, \ell-1}^{(s+4)} )$.

\item For $k\geq 1, \ell \geq 1$, let
\begin{align*}
& n_1=M_1A_{3, s+4k+4\ell}^{-1}, \ n_2=M_2A_{3, s+4k+4\ell-4}^{-1}, \ \ldots, \nonumber \\
& n_{\ell-1}=M_{\ell-1}A_{3, s+4k+8}^{-1}, \ n_{\ell}=M_{\ell}A_{2, s+4k+2}^{-1}A_{3, s+4k+4}^{-1}, \nonumber \\ & n_{\ell+1}=M_{\ell+1}A_{3, s+4k-2}^{-1}A_{2, s+4k}^{-1}, \ n_{\ell+2}=M_{\ell+2}A_{3, s+4k-6}^{-1}, \\
& n_{\ell+3}=M_{\ell+3}A_{3, s+4k-10}^{-1}, \ \ldots, \ n_{k+\ell-1}=M_{k+\ell-1}A_{3, s+6}^{-1}.
\end{align*}
Then for $i=1, \ldots, k+\ell-1$, $n_i\in \chi_q(M_i)$ and $n_i \not\in \chi_q( P_{k, \ell}^{(s)} )  \chi_q( P_{k-1, \ell-1}^{(s+4)} )$.

\item For $k\geq 1, \ell \geq 1$, let
\begin{align*}
& n_1=M_1A_{2, s+2k+2\ell+4}^{-1}, \ n_2=M_2A_{2, s+2k+2\ell+2}^{-1}, \ \ldots, \nonumber \\
& n_{\ell-1}=M_{\ell-1}A_{2, s+2k+7}^{-1}, \ n_{\ell}=M_{\ell}A_{1, s+2k+4}^{-1}A_{2, s+2k+5}^{-1}, \nonumber \\
& n_{\ell+1}=M_{\ell+1}A_{1, s+2k+2}^{-1}, \ n_{\ell+2}=M_{\ell+2}A_{2, s+2k-1}^{-1}A_{1, 2k}^{-1}, \\
& n_{\ell+3}=M_{\ell+3}A_{2, s+2k-3}^{-1}, \ n_{\ell+4}=M_{\ell+4}A_{2, s+2k-5}^{-1}, \ \ldots, \ n_{k+\ell}=M_{k+\ell}A_{2, s+3}^{-1}.
\end{align*}
Then for $i=1, \ldots, k+\ell$, $n_i\in \chi_q(M_i)$ and $n_i \not\in \chi_q( O_{k, \ell}^{(s)} )  \chi_q( O_{k-1, \ell-1}^{(s+2)} )$.

\end{enumerate}
\end{lemma}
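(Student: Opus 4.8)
The plan is to establish, for every non-highest dominant monomial $M_i$ listed in Lemma \ref{dominant monomials}, the two stated properties of the accompanying witness monomial $n_i$: that $n_i \in \chi_q(M_i)$ and that $n_i \notin \chi_q(\mathcal{T})\chi_q(\mathcal{B})$, where $\mathcal{T}$ and $\mathcal{B}$ are the top and bottom modules of the corresponding relation. These are exactly the two facts needed by the criterion of Section \ref{prove irreducible}: together they show $\mathscr{M}(L(M_i)) \not\subset \mathscr{M}(\mathcal{T} \otimes \mathcal{B})$, so that $M_i$ cannot be the highest $l$-weight of a genuine sub-quotient of $\mathcal{T}\otimes\mathcal{B}$, and hence $\mathcal{T}\otimes\mathcal{B}$ is irreducible. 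All the cases are handled by the same two-step method, so I would carry out the first case, that of $\chi_q(T_{k,\ell,0}^{(s)})\chi_q(T_{k-1,\ell-1,0}^{(s+4)})$, in full, and indicate that the remaining cases follow verbatim after relabelling the nodes and the spectral parameters.

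For the first property, $n_i \in \chi_q(M_i)$, I would argue through the $U_{q_j}(\hat{\mathfrak{sl}_2})$-reduction at the node $j$ being lowered. Let $v$ be an $l$-highest weight vector of $L(M_i)$. Under $U_{q_j}(\hat{\mathfrak{sl}_2})$ the vector $v$ generates a highest weight module with $q$-character $\chi_q(\beta_j(M_i))$, and the lowering operators $x_{j,n}^{-}$ shift the full $l$-weight by $A_{j,\cdot}^{-1}$. By construction $n_i = M_i A_{j,b}^{-1}$ (or, at the turning indices, $M_i$ times a product of two $A^{-1}$ at adjacent nodes), and $M_i$ is dominant by Lemma \ref{dominant monomials}; in particular $M_i$ carries a positive factor $Y_{j,\cdot}$ at precisely the position for which $A_{j,b}^{-1}$ is the first admissible lowering of the corresponding $\mathfrak{sl}_2$-string. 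Hence $\beta_j(n_i)$ occurs in $\chi_q(\beta_j(M_i))$, the weight space of $L(M_i)$ of $l$-weight $n_i$ is nonzero, and $n_i \in \chi_q(M_i)$. For the two-node turning-index witnesses one applies this argument once at each node, checking via the FM algorithm that the combined monomial indeed occurs. This is the routine direction and uses only the explicit string data recorded in Lemma \ref{dominant monomials}.

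The substance is the second property, $n_i \notin \chi_q(\mathcal{T})\chi_q(\mathcal{B})$. Here I would use that the $A_{i,a}$, $i\in I$, $a\in\mathbb{C}^{\times}$, are algebraically independent, so that the expression $n_i = \mathcal{T}\mathcal{B}\prod A_{i,a}^{-v_{i,a}}$ relative to the product $M=\mathcal{T}\mathcal{B}$ of highest weights is unique. Suppose for contradiction that $n_i = t\,b$ with $t \in \mathscr{M}(L(\mathcal{T}))$ and $b \in \mathscr{M}(L(\mathcal{B}))$; since $t \in \mathcal{T}\mathcal{Q}^{-}$ and $b \in \mathcal{B}\mathcal{Q}^{-}$, the $A$-content of $n_i/M$ must split as the $A$-content of $t/\mathcal{T}$ times that of $b/\mathcal{B}$. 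The key observation is that $n_i/M$ contains an obstruction: either some doubled lowering $A_{j,b}^{-2}$ (as in $n_1 = M A_{2,s+4k+2\ell-2}^{-2}$ of case (1)), or a single $A_{j,b}^{-1}$ that by itself skips a required intermediate lowering of the $j$-string of $\mathcal{T}$ or of $\mathcal{B}$. I would then enumerate the finitely many distributions of this $A$-content between $t$ and $b$ and discard each by the $\beta_j$-reduction, using two elementary $\mathfrak{sl}_2$ impossibilities: a single $U_{q_j}(\hat{\mathfrak{sl}_2})$-string cannot be lowered twice at the same site, and a lowering $A_{j,b}^{-1}$ cannot appear in $\chi_q(L(\mathcal{T}))$ (resp. $\chi_q(L(\mathcal{B}))$) unless the lowering at the adjacent larger spectral parameter of the same string is already present. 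Each distribution therefore forces $t \notin \mathscr{M}(L(\mathcal{T}))$ or $b \notin \mathscr{M}(L(\mathcal{B}))$, the desired contradiction.

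The main obstacle I anticipate is the uniform bookkeeping rather than any single hard idea: one must read off, from the string structure of each pair $(\mathcal{T},\mathcal{B})$ underlying Lemma \ref{dominant monomials}, exactly which $\mathfrak{sl}_2$-strings of $\mathcal{T}$ and of $\mathcal{B}$ overlap the position of the obstructing $A$-factor, and then verify that every admissible split is blocked. The turning-index witnesses of the form $M_i A_{j,b}^{-1}A_{j',b'}^{-1}$ with $j\neq j'$, together with the witnesses of cases (11), (12), (13) that couple the nodes $1,2,3$ simultaneously, are the most delicate, since there the obstruction involves two interacting strings; but in every instance the same two impossibilities—double lowering at one site and skipping an intermediate lowering—suffice to finish the argument.
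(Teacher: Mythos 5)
Your proposal follows the paper's proof essentially verbatim: the paper likewise establishes $n_i\in\chi_q(M_i)$ by a $U_{q_j}(\hat{\mathfrak{sl}}_2)$-reduction (invoking Theorem \ref{truncated} with a small set $U$ for the two-node turning-index witnesses, which is the formal version of your "check via the FM algorithm"), and rules out $n_i\in\chi_q(\mathcal{T})\chi_q(\mathcal{B})$ by the unique factorization of the $A$-content, which forces a monomial such as $T_{k,\ell,0}^{(s)}A_{2,s+4k+2\ell-2}^{-1}$ to lie in $\chi_q(\mathcal{T})$ or $\chi_q(\mathcal{B})$, contradicting the FM algorithm. The two $\mathfrak{sl}_2$ obstructions you isolate (no doubled lowering at a site, no skipping the first admissible lowering of a string) are exactly what the paper's one-line contradiction rests on.
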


\begin{proof}
We prove the case of $\chi_q(T_{k, \ell, 0}^{(s)} )  \chi_q( T_{k-1, \ell-1, 0}^{(s+4)})$. The other cases are similar. By definition, we have
\begin{eqnarray*}
& T_{k, \ell, 0}^{(s)} & =(3_{s}3_{s+4}\cdots 3_{s+4k-4})(2_{s+4k+1}2_{s+4k+3}\cdots 2_{s+4k+2\ell-3}2_{s+4k+2\ell-1}), \\
& T_{k-1, \ell-1, 0}^{(s+4)} & = ( 3_{s+4}\cdots 3_{s+4k-4})(2_{s+4k+1}2_{s+4k+3}\cdots 2_{s+4k+2\ell-3} ), \\
& M_1 & = T_{k, \ell, 0}^{(s)} T_{k-1, \ell-1, 0}^{(s+4)} A_{2, s+4k+2\ell-2}^{-1} \\
& & = T_{k, \ell, 0}^{(s)} T_{k-1, \ell-1, 0}^{(s+4)} 2_{s+4k+2\ell-1}^{-1}1_{s+4k+2\ell-2} 3_{s+4k+2\ell-2}.
\end{eqnarray*}
By $U_{q_2}(\hat{\mathfrak{sl}}_2)$ argument, it is clear that $n_1=M_1 A_{2, s+4k+2\ell-2}^{-1}$ is in $\chi_q(M_1)$.

If $n_1$ is in $\chi_q(T_{k, \ell, 0}^{(s)})  \chi_q( T_{k-1, \ell-1, 0}^{(s+4)} )$, then $T_{k, \ell, 0}^{(s)} A_{2, s+4k+2\ell-2}^{-1}$ is in $\chi_q(T_{k, \ell, 0}^{(s)})$ which is impossible by the FM algorithm for $\mathcal{T}_{k, \ell, 0}^{(s)}$. Similarly, $n_i\in \chi_q(M_i), i=2, \ldots, \ell-1$, but $n_2, \ldots, n_{\ell-1}$ are not in $\chi_q( T_{k, \ell, 0}^{(s)} )  \chi_q( T_{k-1, \ell-1, 0}^{(s+4)} )$.

By definition,
\begin{eqnarray*}
M_{\ell} & = & (3_{s}3_{s+4}\cdots 3_{s+4k-4})( 3_{s}3_{s+4}\cdots 3_{s+4k-8} ) 2_{s+4k-3} \times \\
& & \times (1_{s+4k} 1_{s+4k+2} 1_{s+4k+4} \cdots 1_{s+4k-2\ell-2} ) (3_{s+4k+2} 3_{s+4k+4} \cdots 3_{s+4k-2\ell-2} ).
\end{eqnarray*}
Let $U=\{ (2, aq^{s+4k}), (3, aq^{s+4k-2}) \} \subset I \times \mathbb{C}^{\times}$. Let $\mathcal{M}$ be the finite set consisting of the following monomials
\begin{align*}
m_0=M_{\ell}, \ m_1=m_0A_{3, s+4k-2}^{-1}, \ m_2=m_1A_{2, s+4k}^{-1}.
\end{align*}
It is clear that $\mathcal{M}$ satisfies the conditions in Theorem \ref{truncated}. Therefore
\begin{align*}
\text{trunc}_{M_{\ell} \mathcal{Q}_{U}^{-}}(\chi_q(M_{\ell}))=\sum_{m\in \mathcal{M}} m
\end{align*}
and hence $n_{\ell}=M_{\ell} A_{3, s+4k-2}^{-1} A_{2, s+4k}^{-1} $ is in $\chi_q(M_{\ell})$.

If $n_{\ell}$ is in $ \chi_q( T_{k, \ell, 0}^{(s)} )  \chi_q( T_{k-1, \ell-1, 0}^{(s+4)} ) $, then $ T_{k, \ell, 0}^{(s)} A_{3, s+4k-2}^{-1} A_{2, s+4k}^{-1} $ is in $\chi_q( T_{k, \ell, 0}^{(s)} )$ which is impossible by the FM algorithm for $ \mathcal{T}_{k, \ell, 0}^{(s)} $.

Similarly, for $i=\ell+1, \ldots, k+\ell-2$, we have $n_i\in \chi_q(M_i)$ and $n_i \not\in \chi_q( T_{k, \ell, 0}^{(s)} )  \chi_q( T_{k-1, \ell-1, 0}^{(s+4)} ) $.
\end{proof}

\section{Proof Proposition \ref{compute 1}} \label{prove compute}

Let $\mathcal{A}^{(s)}$ be a module in the system I. By (\ref{shift}), $\chi_q(\mathcal{A}^{(s)})$ is obtained from $\chi_q(\mathcal{A}^{(0)})$ by a shift of indices. For simplicity, we do not write the upper-subscripts "$(s)$" in the proof. The $q$-characters of Kirillov-Reshetikhin modules can be computed from $\chi_q(1_0), \chi_q(2_0), \chi_q(3_0)$, see \cite{Her06}.

It suffices to prove the following results.
\begin{enumerate}[(1)]
\item By using (\ref{relation 5 in the system I}), we can compute $\chi_q(T_{0, \ell, r})$, $r\in \{0, 1, 2\}$, $\ell \in \mathbb{Z}_{\geq 0}$.

\item By using (\ref{relation 2 in the system I}), (\ref{relation 3 in the system I}), and (\ref{relation 4 in the system I}), we can compute $\chi_q(T_{k, 0, m})$, $\chi_q(\tilde{T}_{k, 0, m})$, $\chi_q( S_{k, m} )$, $k, m \in \mathbb{Z}_{\geq 0}$.

\item By using (\ref{relation 1 in the system I}), (\ref{relation 6 in the system I})--(\ref{relation 9 in the system I}) and the $q$-characters computed in (1), (2), we can compute $\chi_q( R_{ k, 2\ell+i, \ell } )$, $\chi_q(T_{k, \ell, 0})$, $i=0, 1, 2$, $k, \ell \in \mathbb{Z}_{\geq 0}$.
\end{enumerate}

Proof of (1). We use induction on $\ell$, $r$. If $\ell \leq 1, r = 0$, then (1) is clearly true. Suppose that (1) is true for $\ell \leq \ell_1, r \leq r_1$, $\ell_1 \geq 1$, $r_1 \in \{0, 1, 2\}$. We will show that (1) is true for $\ell = \ell_1+1, r = r_1$ and $\ell = \ell_1, r = r_1+1$ respectively.

Suppose that $\ell = \ell_1+1, r = r_1$. If $r = 0$, then $T_{0, \ell, r}$ is a Kirillov-Reshetikhin modules and hence (1) is true. If $r = 1$, then by using (\ref{relation 5 in the system I}), $T_{0, \ell, r}$ can be computed by using the $q$-characters for Kirillov-Reshetikhin modules and induction hypothesis. If $r = 2$, then by using (\ref{relation 5 in the system I}), $T_{0, \ell, r}$ can be computed by using the $q$-characters for Kirillov-Reshetikhin modules, $\chi_q(0, k, 1)$, $k \in \mathbb{Z}_{\geq 0}$, and induction hypothesis. Similarly, we can show that (1) is true for $\ell = \ell_1, r = r_1+1$.

Proof of (2). It suffices to show the following result.

(2') By using (\ref{relation 2 in the system I}), (\ref{relation 3 in the system I}), and (\ref{relation 4 in the system I}), we can compute
\begin{align*}
& \chi_q(T_{k, 0, m}), \ k \leq k_1, m \leq m_1, \quad \chi_q(\tilde{T}_{m, 0, k}), \ k \leq k_1, m \leq m_1, \\
& \chi_q( S_{k, m} ), \ k \leq 2k_1, m\leq m_1-1, \quad \chi_q( S_{m, k} ), \ m \leq m_1-1, k \leq 2k_1.
\end{align*}
If $t_1 < 0$ or $t_2 < 0$, then we do not need to compute $S_{t_1, t_2}$.

We will prove (2') by using induction on $k_1$, $m_1$. If $k_1 = 1, m_1 = 1$, then (2') is clearly true. Suppose that (2') is true for $k_1=k_2, m_1=m_2$, $k_2, m_2 \geq 1$. We will show that (2') is true for $k_1 = k_2+1, m_1=m_2$ and $k_1=k_2,  m_1=m_2+1$ respectively.

Suppose that $k_1 = k_2+1, m_1 = m_2$. We only need to show that the following $q$-characters
\begin{align}
& \chi_q(S_{2k_2+1, m}), \ m \leq m_2 -1, \quad \chi_q(S_{2k_2+2, m}), \ m \leq m_2 -1, \quad \chi_q(T_{k_2+1, 0, m}), \ m \leq m_2, \\
& \chi_q(\tilde{T}_{m, 0, k_2 + 1}), \ m \leq m_2, \quad \chi_q( S_{m, 2k_2+1} ), \ m \leq m_2-1, \quad \chi_q( S_{m, 2k_2 + 2} ), \ m \leq m_2-1, \label{q-characters of T tilde}
\end{align}
can be computed. We compute the following $q$-characters
\begin{align*}
& \chi_q(S_{ 2k_2, 2 m - 1 }), \ m\leq \lfloor \frac{m_2-1}{2} \rfloor, \quad \chi_q(\tilde{T}_{2k_2+1, 0, m}), \ m\leq \lfloor \frac{m_2-1}{2} \rfloor, \\
& \chi_q(T_{k_2, 0, m}), \ m\leq m_2-1, \quad \chi_q(S_{2k_2+1, m}), \ m\leq m_2-1, \quad \chi_q(T_{k_2+1, 0, m}), \ m\leq m_2, \\
& \chi_q(S_{2k_2+1, m}), \ m\leq \lfloor \frac{m_2-1}{2} \rfloor, \quad \chi_q(\tilde{T}_{2k_2+2, 0, m}), \ m\leq \lfloor \frac{m_2-1}{2} \rfloor, \\
& \chi_q(S_{2k_2+2, m}), \ m\leq m_2 - 1,
\end{align*}
in the order as shown. At each step, we consider the module that we want to compute as a top module and use the corresponding relation in Theorem \ref{the system I} and known $q$-characters. Therefore $\chi_q(S_{2k_2+1, m})$, $m \leq m_2 - 1$, $\chi_q(S_{2k_2+2, m})$, $m\leq m_2 - 1$, $\chi_q(T_{k_1+1, 0, m})$, $m \leq m_2$, can be computed. The fact that the $q$-characters in (\ref{q-characters of T tilde}) can be computed can be proved similarly. Therefore (2') is true for $k_1 = k_2+1, m_1 = m_2$.

Similarly, we can show that (2') is true for $k_1 = k_2, m_1 = m_2+1$.

Proof of (3). It suffices to prove the following result.

(3') By using (\ref{relation 1 in the system I}), (\ref{relation 6 in the system I})--(\ref{relation 9 in the system I}) and the $q$-characters computed in (1), (2), we can compute the following $q$-characters:
\begin{align*}
& \chi_q( R_{ k , 2 \ell, \ell} ), \ k\leq 2k_1-3, \ell \leq \ell_1, \quad \chi_q( R_{ k , 2 \ell, \ell-1} ), \ k\leq 2k_1-1, \ell \leq \ell_1, \\
& \chi_q( R_{ k , 2 \ell-1, \ell-1} ), \ k\leq 2k_1-1, \ell \leq \ell_1, \quad \chi_q(T_{k, \ell, 0}), \ k \leq k_1, \ell \leq 2\ell_1.
\end{align*}

Let $k_2 = 1$, $\ell_2 = 1$. The $q$-characters of $R_{ k , i, 0} = T_{ k , i, 0}$, $i = 0, 1, 2$, are computed in (1). Therefore (3') is true in this case by using (\ref{relation 6 in the system I}) and the $q$-characters computed in (1).

Suppose that (3') is true for $k_1=k_2$, $\ell_1=\ell_2$, $k_2 \geq 1$, $\ell_2 \geq 0$. We will show that (3') is true for $k_1=k_2$, $\ell_1=\ell_2+1$ and $k_1=k_2+1$, $\ell_1=\ell_2$ respectively. Let $k_1=k_2$, $\ell_1=\ell_2+1$. We need to show that the following $q$-characters
\begin{align*}
& \chi_q( R_{ k , 2 \ell_2 + 2, \ell_2 + 1} ), \ k\leq 2k_2 - 1, \quad \chi_q( R_{ k , 2 \ell_2 + 2, \ell_2} ), \ k\leq 2k_2-1, \\
& \chi_q( R_{ k , 2 \ell_2+1, \ell_2} ), \ k\leq 2k_2-1, \quad \chi_q(T_{k, 2\ell_2+1, 0}), \ k \leq k_2, \quad \chi_q(T_{k, 2\ell_2+2, 0}), \ k \leq k_2,
\end{align*}
can be computed.
We compute the following $q$-characters
\begin{align*}
& \chi_q(T_{ k-2, 2 \ell_2, 0}), \ k\leq k_2, \quad \chi_q(R_{2k-3, 2\ell_2+1, \ell_2}), \ k\leq k_2, \\
& \chi_q(T_{ k-1, 2 \ell_2 + 1, 0}), \ k\leq k_2, \quad \chi_q(R_{2k-1, 2\ell_2+2, \ell_2}), \ k\leq k_2, \\
& \chi_q(R_{ 2k-1, 2 \ell_2 + 2, \ell_2+1}), \ k\leq k_2, \quad \chi_q(T_{k, 2\ell_2+2, 0}), \ k\leq k_2,
\end{align*}
in the order as shown. At each step, we consider the module that we want to compute as a top module and use the corresponding relation in Theorem \ref{the system I} and known $q$-characters. Therefore (3') is true for $k_1 = k_2, \ell_1 = \ell_2+1$.

Similarly, we can show that (3') is true for $k_1 = k_2+1, \ell_1 = \ell_2$.

\section{Conjectural character formulas} \label{conjectural formulas}

Every $U_q(\hat{\mathfrak{g}})$-module $V$ is also a $U_q(\mathfrak{g})$-module. We use $\res(V)$ to denote the $U_q(\mathfrak{g})$-module obtained by restricting $V$ to $U_q(\mathfrak{g})$.
We use the system I to compute the characters of the restrictions of modules in the system I. According to these computations, we obtain the following conjecture.
\begin{conjecture}\label{decomposition}
For $s\in\mathbb{Z}$, $k, \ell, m \in  \mathbb{Z}_{\geq 0}$, $\res(\mathcal{T}_{k, \ell, 0}^{(s)})$, $\res(\mathcal{T}_{k, 0, m}^{(s)})$, $\res(\mathcal{\tilde{T}}_{k, 0, m}^{(s)})$ are decomposed to direct sums of irreducible $U_q(\mathfrak{g})$-modules as follows.
\begin{align*}
\res(\mathcal{T}_{k, \ell, 0}^{(s)}) = \bigoplus_{j=0}^{i} \bigoplus_{i=0}^{\lfloor \frac{\ell}{2} \rfloor} V( (2i-2j)\omega_1 + (\ell-2i) \omega_2 + k \omega_3),
\end{align*}

\begin{align*}
\res(\mathcal{T}_{k, 0, m}^{(s)}) = \bigoplus_{i=0}^{\lfloor \frac{m}{2} \rfloor} V( (m-2i)\omega_1 + k \omega_3),
\end{align*}

\begin{align*}
\res(\mathcal{\tilde{T}}_{k, 0, m}^{(s)}) = \bigoplus_{i=0}^{\lfloor \frac{k}{2} \rfloor} V( (k-2i)\omega_1 + m \omega_3).
\end{align*}
\end{conjecture}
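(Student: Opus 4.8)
The plan is to recast the three decomposition formulas as identities in the Grothendieck ring $\rep(U_q\mathfrak{g})$ and to establish them by induction from the restricted form of the T-system of Theorem \ref{the system I}. First I would record two structural facts. Restriction is a ring homomorphism $\res\colon \rep(U_q\hat{\mathfrak{g}})\to \rep(U_q\mathfrak{g})$, and by the shift property \eqref{shift} the class $[\res(\mathcal{T}^{(s)})]$ is independent of $s$; hence every relation of Theorem \ref{the system I} descends to an identity among the classes $[\res(\mathcal{T}_{k,\ell,0})]$, $[\res(\mathcal{T}_{k,0,m})]$, $[\res(\tilde{\mathcal{T}}_{k,0,m})]$, $[\res(\mathcal{S}_{k,\ell})]$, and so on. Moreover, since the modules in question are special (Theorem \ref{special}) their $q$-characters are computable (Proposition \ref{compute 1}), and the ordinary character of $\res(\mathcal{T})$ is the image of $\chi_q(\mathcal{T})$ under the homomorphism that forgets the spectral parameters (sending each $Y_{i,a}$ to a single variable recording the weight $\omega_i$); this gives an independent way to check any proposed decomposition in individual cases.

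The inductive argument I would set up treats the restricted relations as recursions. Writing $A_{k,m}=[\res(\mathcal{T}_{k,0,m})]$, relation \eqref{relation 2 in the system I} becomes
\begin{align*}
A_{k,m-1}\,A_{k-1,m}=A_{k,m}\,A_{k-1,m-1}+[\res(\mathcal{S}_{2k-1,m-1})],
\end{align*}
relation \eqref{relation 4 in the system I} gives the analogous recursion for $[\res(\tilde{\mathcal{T}}_{k,0,m})]$, relation \eqref{relation 1 in the system I} governs $[\res(\mathcal{T}_{k,\ell,0})]$, and \eqref{relation 3 in the system I} couples all of these to $[\res(\mathcal{S}_{k,\ell})]$. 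Since $\rep(U_q\mathfrak{g})$ is a polynomial ring in the classes $[V(\omega_i)]$, it is an integral domain, so each recursion determines the unknown top class once the lower classes and the source class are known. I would therefore first propose a decomposition for the auxiliary modules $\mathcal{S}_{k,\ell}$ (whose highest weight is $(k+\ell)\omega_2$), take the Kirillov--Reshetikhin modules as base cases (their restrictions being known), and then verify the whole coupled family by substituting the conjectured decompositions into each restricted relation and checking that the identity holds. For the two single-sum formulas, $\res(\mathcal{T}_{k,0,m})$ and $\res(\tilde{\mathcal{T}}_{k,0,m})$, the summands are supported on the disconnected nodes $1$ and $3$, which should make the relevant products considerably more tractable than in the general $\mathcal{T}_{k,\ell,0}$ case.

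The main obstacle is the verification step itself. Each restricted relation requires decomposing products such as $A_{k,m-1}\,A_{k-1,m}$, that is, tensor products of the form $V\big((m-1-2i)\omega_1+k\omega_3\big)\otimes V\big((m-2j)\omega_1+(k-1)\omega_3\big)$, into irreducible $U_q\mathfrak{g}$-modules, and then showing that the ``excess'' over $A_{k,m}\,A_{k-1,m-1}$ cancels exactly against $[\res(\mathcal{S}_{2k-1,m-1})]$. These type $C_3$ tensor product multiplicities are not multiplicity-free in general, and controlling them uniformly in $k,\ell,m$ — which is exactly what a proof of the conjecture demands — is the delicate combinatorial heart of the problem; the double-sum shape of the $\mathcal{T}_{k,\ell,0}$ formula makes this step hardest there. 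A fallback would be the direct route: extract a closed form for the forgetful image of $\chi_q(\mathcal{T})$ from the recursion of Proposition \ref{compute 1} and match weight multiplicities against $\sum_\lambda \dim V(\lambda)_\mu$ via Freudenthal's formula. This, however, requires more explicit control of the full $q$-character than the special-module machinery readily provides, which is presumably why the statement is recorded as a conjecture rather than a theorem.
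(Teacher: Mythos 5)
There is no proof of this statement in the paper to compare against: it is stated as a conjecture, supported only by computations of restricted characters carried out with the system I (via Proposition \ref{compute 1} and the forgetful map on $q$-characters), together with a single consistency check of the case $\lambda=2\omega_2$ against Conjecture 1.13 of \cite{CG11}, which is a theorem by \cite{Sam13} but applies only when the $\omega_3$-component vanishes, i.e.\ to $\mathcal{T}_{0,\ell,0}^{(s)}$. Your outline is essentially a formalization of how that evidence is generated: restrict the relations of Theorem \ref{the system I} to $\rep(U_q\mathfrak{g})$, use the shift property to drop $s$, and treat the resulting identities as recursions in the integral domain $\rep(U_q\mathfrak{g})$ with Kirillov--Reshetikhin restrictions as base cases. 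That framework is sound as far as it goes, and the observation that the recursion determines the top class uniquely once the lower classes and the source are known is correct.

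The genuine gap is the one you name yourself but do not close, and it is not a minor technicality: to run the induction you must (i) supply and prove a closed decomposition for $\res(\mathcal{S}_{k,\ell})$ and for the other source factors appearing in \eqref{relation 1 in the system I} and \eqref{relation 3 in the system I} (the sources in \eqref{relation 1 in the system I} involve the modules $\mathcal{R}_{2k-1,\ell,\lfloor \ell/2\rfloor}^{(s+1)}$, which are not covered by the conjectured formulas at all), and (ii) prove, uniformly in $k,\ell,m$, that the difference $A_{k,m-1}A_{k-1,m}-A_{k,m}A_{k-1,m-1}$ of products of the conjectured direct sums equals the source class. Step (ii) amounts to a family of type $C_3$ tensor product identities that are not multiplicity-free, and no mechanism is proposed for establishing them beyond case-by-case computation (e.g.\ with LiE, as the paper does for one example). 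Until those two ingredients are supplied, the argument verifies the formulas only in finitely many instances and does not upgrade the conjecture to a theorem; this is consistent with the paper's own decision to record the statement as a conjecture.
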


\section*{Acknowledgements}
The author is very grateful to Evgeny Mukhin, Tomoki Nakanishi, and Charles A. S. Young for stimulating discussions. This research was partially supported by the National Natural Science Foundation of China (no. 11371177) and the Fundamental Research Funds for the Central Universities (Grant No. lzujbky-2012-12).

\end{document}